\numberwithin{figure}{section}
\tikzset{ext/.style={circle, draw,inner sep=1pt},int/.style={circle,draw,fill,inner sep=1pt},nil/.style={inner sep=1pt}}
\numberwithin{equation}{section}
\newtheorem{thm}{Theorem}[section]
\newtheorem{lemma}[thm]{Lemma}
\newtheorem{proposition}[thm]{Proposition}
\newtheorem{cor}[thm]{Corollary}
\theoremstyle{definition}
\newtheorem{remark}[thm]{Remark}
\newtheorem{example}[thm]{Example}
\newcommand{\bbR}{\mathbb{R}}
\newcommand{\bbQ}{\mathbb{Q}}
\newcommand{\bbZ}{\mathbb{Z}}
\title{\normalsize \textbf{GOUSSAROV-POLYAK-VIRO TYPE FORMULAS FOR $(4k-1)$-DIMENSIONAL KNOTS AND LINKS IN $\bbR^{6k}$}}
\date{}
 \author{\small NEETI GAUNIYAL AND VICTOR TURCHIN}
\begin{document}

\maketitle 

\begin{abstract}
\noindent
We produce combinatorial formulas for invariants of smooth embeddings of $(2\ell-1)$-spheres into $\bbR^{3\ell}$ for $\ell\geq 2$.  Furthermore, we obtain such a formula for the Haefliger invariant, which classifies smooth knots $S^{4k-1}\hookrightarrow \bbR^{6k}$ up to isotopy. Our approach is similar in spirit to  the work of Goussarov, Polyak, and Viro 
expressing  finite-type invariants of classical knots in terms of Gauss diagrams. We similarly  project   higher dimensional knots and links onto a hyperplane and study the preimages of the sets of double and singular points in the embedded spheres. As an auxiliary result, 
we show 
that the space of $n$-dimensional braids with $k$ strands in $\bbR^{n+q}$ is a homotopy retract of the space of long links $\underset{k}{\sqcup}\bbR^n\hookrightarrow\bbR^{n+q}$ for $q\geq 3$, thus proving  a conjecture of Komendarczyk, Koytcheff and Voli\'c.


\end{abstract}

\begin{section}{Introduction}\label{s:intro}
A \textit{high-dimensional knot} is an embedding $S^n\hookrightarrow \bbR^{n+q}$. A \textit{spherical knot} refers to an embedding of $S^n\hookrightarrow S^{n+q}$, while a \textit{long knot} is an embedding $\bbR^n\hookrightarrow \bbR^{n+q}$ that agrees with the standard inclusion outside a compact set. The spaces of such embeddings are denoted by $Emb(S^n, \bbR^{n+q}),Emb(S^n, S^{n+q})$,  and $Emb_{\partial}(\bbR^n, \bbR^{n+q})$, respectively. A \textit{high-dimensional link} with $m$ components is an embedding $\underset{i=1}{\overset{m}{\sqcup}} S^n\hookrightarrow \bbR^{n+q}$, where all components have the same dimension. 
It is known that for $q\geq 3$ and $m\geq 1$, 
\begin{equation}
\pi_0Emb(\underset{i=1}{\overset{m}{\sqcup}}S^n,\bbR^{n+q})=\pi_0Emb(\underset{i=1}{\overset{m}{\sqcup}}S^n,S^{n+q})=\pi_0Emb_{\partial}(\underset{i=1}{\overset{m}{\sqcup}}\bbR^n,\bbR^{n+q}),
\end{equation}
where each term forms a finitely generated abelian group under component-wise connected sum, see \cite{HAE4}, \cite[Lemma~3.7]{SONG}, \cite[Lemma~4.8]{RK}. This equivalence allows us to use these embeddings interchangeably throughout the paper.

Haefliger \cite{HAE} showed that smooth knots $S^n\hookrightarrow S^{n+q}$ are trivial when $q> \frac{n+3}{2}$, but can be nontrivial when $3\leq q\leq \frac{n+3}{2}$. 
In contrast, Zeeman \cite{ZM} and Stallings \cite{STA} independently proved that in piecewise linear and topological locally flat settings, respectively, all such knots are trivial when $q\geq 3$. Haefliger further classified the isotopy classes of smooth spherical embeddings in the border codimension $q=\frac{n+3}{2}$:
\begin{align*}
\pi_{0} Emb(S^{2\ell-1}, S^{3\ell}) = \left\{ \begin{array}{cc} 
              \bbZ & \hspace{3mm}  \text{ if $\ell$ is even, } \ell\geq 2 \\
                \bbZ_2 & \hspace{2.5mm} \text{ if $\ell$ is odd, } \ell\geq 3. \\
                              \end{array} \right.
                                           \end{align*}
In the even case, Haefliger \cite{HAE2} constructed an explicit embedding known as the \textit{Haefliger trefoil knot}, which generates $\pi_{0} Emb(S^{2\ell-1}, S^{3\ell})\approx \bbZ$. More recently, Koytcheff~\cite{RK} extended this result to the odd case. 

 One of the intriguing problems in this area is finding a combinatorial formula for the Haefliger invariant $\mathcal{H}:\pi_{0} Emb(S^{4k-1}, S^{6k})\xrightarrow{\approx} \bbZ$ in terms of the self-intersections of the projection of the knot to a hyperplane. This is analogous to Goussarov, Polyak, and Viro's work \cite{GPV,PV,PVC}, see also~\cite[Chapter~13]{VI}, which expressed  finite-type knot invariants 
  using Gauss diagrams in classical knot theory. A \textit{Gauss diagram} for a knot $S^1\hookrightarrow \bbR^3$ consists of an oriented circle with chords connecting double points that project to the same point in $\bbR^2$.
 Each chord is oriented from the overcrossing to the undercrossing and is assigned a sign based on the local writhe number. The following figure illustrates the Gauss diagram of the $3_1$ (trefoil) knot: 
\begin{figure}[ht]
\centering
 \includegraphics[width=0.6\textwidth]{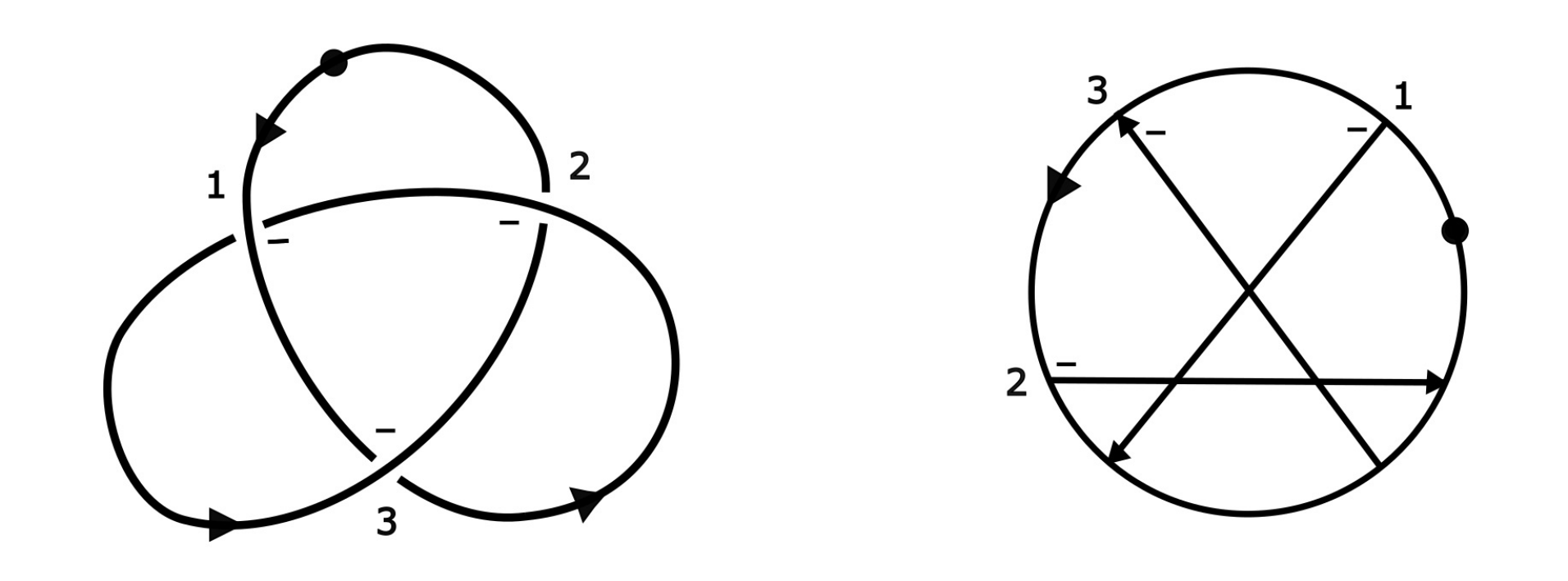}
 \caption{Gauss diagram of $3_1$ knot}
 \end{figure}

An alternative perspective on Gauss diagrams is to view them as the set of double points of a knot projection, forming a $0$-submanifold with an involution that encodes crossing data. The
signs of the intersection points describe  the orientation of this $0$-submanifold. For example, the diagram above can also be represented as follows, with red denoting overcrossings and blue denoting undercrossings.

\begin{figure}[ht]
\centering
 \includegraphics[width=0.35\textwidth]{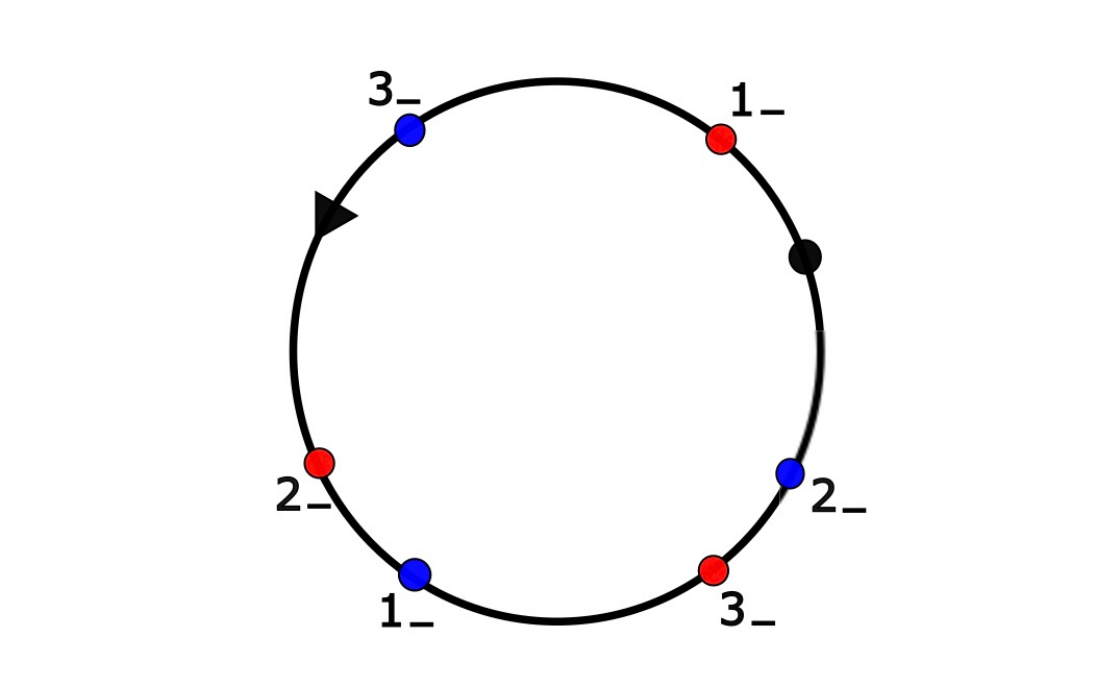}
 \caption{Another description of the Gauss diagram of $3_1$ knot}
 \end{figure}

This description allows us to generalize the notion of a Gauss diagram to higher dimensions. One of our primary motivations is to develop a Goussarov-Polyak-Viro type formula for the Haefliger knots $S^{4k-1}\hookrightarrow \bbR^{6k}$ by recovering it from their projections. The difficulty in this approach is that a nonzero multiple of the Haefliger trefoil is isotopic to an embedding in one lower dimension~\cite{HAE}, indicating that any combinatorial formula for $\mathcal{H}$ must incorporate tangential data beyond double points. However, for the related cases of $2$- and $3$-component links in these dimensions, we succeeded in deriving explicit expressions purely in terms of linking numbers of the preimages of double point sets. Previously, combinatorial formulas for the Haefliger invariant of $(4k-1)$-knots in $\bbR^{6k}$ were produced by M.~Takase~\cite{Takase1,Takase2}. However, his approach is different from ours -- instead of looking at the projection, he studies Seifert surfaces of such knots.

The main motivation for this work comes from a recent discovery~\cite[Theorem~4.2]{SONG} that rationally in codimension at least three the (abelian) group of isotopy classes of spherical and string links is isomorphic to the space of uni-trivalent trees of degree zero modulo IHX relations. This suggests that some techniques of the theory of Vassiliev invariants for classical knots and links in $\bbR^3$ can be generalized to higher dimensions.  The simplest invariant -- the linking number between two components in this description corresponds to the coefficient in front of the bar graph:
$$
\begin{tikzpicture}[baseline=-.65ex]
\node (v) at (0,0) {$1$};
\node (w) at (1.3,0) {$2$};
\draw (v) edge (w);
\end{tikzpicture}
$$
This paper is concerned with the second most simple situation of tripod graphs:
$$
\begin{tikzpicture}[baseline=0]
\node (v1) at (-.6,0) {$1$};
\node (v2) at (0,0) {$2$};
\node (v3) at (.6,0) {$3$};
\node [int] (w) at (0,.6) {};
\draw (w) edge (v1) edge (v2) edge (v3);
\end{tikzpicture}
\hspace{.5in}
\begin{tikzpicture}[baseline=0]
\node (v1) at (-.6,0) {$1$};
\node (v2) at (0,0) {$1$};
\node (v3) at (.6,0) {$2$};
\node [int] (w) at (0,.6) {};
\draw (w) edge (v1) edge (v2) edge (v3);
\end{tikzpicture}
\hspace{.5in}
\begin{tikzpicture}[baseline=0]
\node (v1) at (-.6,0) {$1$};
\node (v2) at (0,0) {$2$};
\node (v3) at (.6,0) {$2$};
\node [int] (w) at (0,.6) {};
\draw (w) edge (v1) edge (v2) edge (v3);
\end{tikzpicture}
\hspace{.5in}
\begin{tikzpicture}[baseline=0]
\node (v1) at (-.6,0) {$1$};
\node (v2) at (0,0) {$1$};
\node (v3) at (.6,0) {$1$};
\node [int] (w) at (0,.6) {};
\draw (w) edge (v1) edge (v2) edge (v3);
\end{tikzpicture}
$$
The invariants $\mathcal V$, $\mathcal W_1$, $\mathcal W_2$, $\mathcal H$ that we produce in Theorems~\ref{inv1}, \ref{inv2},
\ref{knotinv} are respective coefficients in front of these tripods.
More generally, the rational homotopy groups of spaces of string links are described as the homology of  certain {\it Hairy Graph-Complexes}~\cite[Corollary~3.6 and Section~5.5, respectively]{SONG,FTW}.
In particular the space of unitrivalent graphs modulo IHX relations
appears as part of the rational homotopy of such spaces, which we believe can be detected by a generalized Goussarov-Polyak-Viro approach.
 
\medskip

\textbf{Outline of the paper:} In Section \ref{Section2}, we introduce a higher-dimensional analogue of Gauss diagrams by analyzing the set of double points arising from projections of embeddings $M^{2\ell-1}\hookrightarrow \bbR^{3\ell}$, where $M$ is a closed, oriented, and possibly disconnected manifold. We also discuss how isotopy affects this set of double points.

In Section \ref{Section3}, we first give the \textit{Brunnian splitting} of $k$-component links $\underset{k}{\sqcup}S^n\hookrightarrow \bbR^{n+q}$ for $q\geq 3$. In particular, any Brunnian link $\underset{k}{\sqcup}S^{2\ell-1}\hookrightarrow \bbR^{3\ell}$ is trivial for $k>3$. For this reason, we consider only links with at most three components. Our first main result~Theorem~\ref{inv1}, gives a Goussarov-Polyak-Viro type invariant for $3$-component links in the given dimensions. We also discuss a few examples; for instance, this invariant evaluates to (minus) one on the high-dimensional Borromean link. We then establish a second main result, Theorem~\ref{inv2}, which provides a similar formula for $2$-component links. Corollary~\ref{inv2_imm} treats its special case, when the projection is an immersion.

In Section \ref{conjecture}, Theorem~\ref{knotinv}  presents a Goussarov-Polyak-Viro type formula for the Haefliger invariant \sloppy $\mathcal{H}:\pi_0Emb(S^{4k-1},\bbR^{6k})\xrightarrow{\approx} \bbZ$, in the case where the projection of a knot $S^{4k-1}\hookrightarrow\bbR^{6k}$ to $\bbR^{6k-1}$ is a generic immersion. In the last Subsection~\ref{ss:Sakai} we
show that our approach gives a different proof of Sakai's crossing change formula~\cite[Theorem~2.5]{SAK2} for the invariant in question.

All three combinatorial formulas from Theorem~\ref{inv1}, Corollary~\ref{inv2_imm}, and Theorem~\ref{knotinv} share a similar structure: Each one contains one summand that depends solely on the projection and an additional term expressed as a sum of linking numbers between overcrossing and undercrossing double-point set components.

In Appendix \ref{appen}, we relate our invariants to configuration spaces. 
Theorem ~\ref{retract}  states that the graphing map $G:\Omega^nC(k,\bbR^q)\rightarrow Emb_{\partial}(\underset{k}{\sqcup}\bbR^n,\bbR^{n+q})$ admits a homotopy retraction. In particular, it induces injections on both rational homotopy $\pi_*(-)\otimes\bbQ$ and rational homology $H_*(-;\bbQ)$, see \cite[Conjecture 5.7]{RAF}.

 We note that many results of this paper are part of the first author's Ph.D. thesis \cite[Chapters~3 and~4]{NG1} supervised by the second author. Namely,
 Theorems~\ref{inv2} and~\ref{retract} are \cite[Theorem~3.6.1 and~3.7.1]{NG1}, respectively. Theorem~\ref{inv1} is an improved version of \cite[Theorem~3.5.1]{NG1}. Theorem~\ref{knotinv} was \cite[Conjecture~4.4.1]{NG1}. A reader interested in generalizing our Haefliger invariant formula to the case when the projection is not an immersion is invited to check different equivalent versions of it, see \cite[Equations (4.4.1), (4.4.2), (4.4.3)]{NG1}. 

 \medskip

\textbf{Notation:}  We work in the smooth category throughout the paper. We use the symbol \enquote{$\approx$} (or \enquote{$=$}) for a group isomorphism; the symbol \enquote{$\simeq$} stands for a homotopy equivalence between two topological spaces; the symbol \enquote{$\cong$} means a diffeomorphism between manifolds. For convenience, we often use the same notation to refer to both an embedding and its isotopy class (or to immersion and its regular homotopy class). 

\medskip

\textbf{Orientation, linking and intersection  conventions:}
For an oriented manifold $M$ with boundary, we orient its boundary $\partial M$ using the \textit{outward normal first convention}: for $p\in\partial M$ an ordered basis of $T_p(\partial M)$ is positively oriented if the outward normal vector at $p$ followed by this basis is a positively oriented basis of $T_pM$. 

We use the following definition of the linking number. 
Let $M^m$ and $N^n$ be oriented, triangulated manifolds in $\bbR^{m+n+1}$, viewed as singular cycles. Let $\mathcal{N}^{n+1}$ be a singular chain such that $\partial \mathcal{N}=N$. When possible, $\mathcal{N}$ is chosen as a manifold with
boundary~$N$, oriented by the outward normal first convention. Generically, $M$ and $\mathcal{N}$ intersect at finitely many points, and the linking number $lk(M,N)$ is defined as the algebraic number of intersections of $M$ and $\mathcal{N}$, denoted by $M\cdot \mathcal{N}$. At each intersection, we assign $+1$ (respectively $-1$) if the orientation $Or(M)$ of $M$, followed by the orientation $Or(\mathcal{N})$ of $\mathcal{N}$ matches (respectively opposes) the positive orientation of $\bbR^{m+n+1}$. Note that the linking number is (anti-)commutative:
\sloppy
 \begin{equation}\label{lksym}
    lk(N,M)=(-1)^{(m+1)(n+1)}lk(M,N)  
 \end{equation}

Let $M^m$ and $N^n$ be smooth oriented manifolds that intersect transversely in $\bbR^k$. Then their intersection $M\cap N$, which has dimension $m+n-k$, inherits an orientation via the \textit{transverse intersection rule}: Given a point $p\in M\cap N$, let $\vec u=(u_1,...,u_{m+n-k})$ be a basis of $T_p(M\cap N)$. Choose complementary frames $\vec v=(v_1,...,v_{k-n})$ and $\vec w=(w_1,...,w_{k-m})$ so that $(\vec u,\vec v)$ and $(\vec u,\vec w)$ form positive bases of $T_p M$ and $T_p N$, respectively. Then $\vec u$ defines a positive orientation of $T_p(M\cap N)$ if and only if the combined frame $(\vec u,\vec v,\vec w)$ is positively oriented in $\bbR^k$. Note that this rule is (anti-)symmetric:
\begin{equation}\label{inters_sym}
     Or(N\cap M)= (-1)^{(k-m)(k-n)} Or(M\cap N).
 \end{equation}
\end{section}

\medskip

\textbf{Acknowledgement:} The authors are grateful to O.~Saeki for discussions and in particular for suggesting to use \cite[Compression Theorem]{COM} for the proof of Theorem~\ref{knotinv}. The first author was supported by the Max Planck Institute for Mathematics, Bonn. 
 The second author was partially supported by the Simons Foundation award \#933026.

\begin{section}{The set of double points and the cobordism under isotopy}\label{Section2}

Throughout this section, we assume $M$ to be a closed oriented possibly disconnected manifold of dimension $2\ell-1$. By \cite[Theorem~4.7.6]{CTC}, a generic smooth map $f:M^{2\ell-1}\rightarrow \bbR^{3\ell-1}$ is an embedding except at double points, forming an $(\ell-1)$-submanifold $L(f)$, and singular points, forming an $(\ell-2)$-submanifold $\Sigma^1(f)$. Locally near $\Sigma^1(f)$, $f$ is given by 
$$y_1=\frac{1}{2}x_1^2,\hspace{0.3cm} y_j=x_j \hspace{0.2cm}(2\leq j\leq 2\ell-1), \hspace{0.3cm} y_{i+2\ell-2}=x_1x_i\hspace{0.2cm}(2\leq i\leq \ell+1),$$
where the rank of the differential drops by one. Such singular points are known as \textit{Whitney umbrella points}. The closure $\overline{L(f)}=L(f)\cup \Sigma^1(f)$ forms a smooth closed submanifold with involution in $M$, where $\Sigma^1(f)$ represents the fixed points. Its image $f(\overline{L(f)})$ is an $(\ell-1)$-submanifold of $\bbR^{3\ell-1}$ with boundary $f(\Sigma^1(f))$.

If $f$ arises from a generic projection of an embedding $M^{2\ell-1}\hookrightarrow \bbR^{3\ell}$, then $$ L(f)=L_{+}(f)\cup L_{-}(f),$$
where $L_{+}(f)$ and $L_{-}(f)$ correspond to overcrossing and undercrossing, respectively. Their closures $\overline{L_{\pm}(f)}=L_{\pm}(f)\cup \Sigma^1(f)$
are manifolds with boundary $\partial \overline{L_{\pm}(f)}=\Sigma^1(f)$.
Consequently, $\overline{L(f)}$ is the double of $\overline{L_{+}(f)}$, forming an $(\ell-1)$-dimensional link in $M$, and serving as an analogue of the Gauss diagram.

\begin{subsection}{Orientation on $\overline{L(f)}$}\label{orient}
Recall that the signs on the chords in the Gauss diagrams correspond to the orientation of the $0$-dimensional intersection set. We now discuss how the orientation is chosen in the higher-dimensional case.

When the codimension of $f:M^{2\ell-1}\rightarrow \bbR^{3\ell-1}$ is even (i.e. $\ell=2k$), both $L(f)\subset M^{4k-1}$ and its image $f(L(f))\subset\bbR^{6k-1}$ admit natural orientations. For a generic map $f:M^{4k-1}\rightarrow \bbR^{6k-1}$, the \textit{Ekholm orientation} (see \cite{EK,EK1}) is defined on $f(L(f))$ using the transverse intersection rule, see Orientation conventions in Section~\ref{s:intro}. Since the codimension of $f$ is even, this orientation does not depend on the order of intersecting sheets. The Ekholm orientation on $L(f)$ is chosen so that $f:L(f)\rightarrow f(L(f))$ preserves~it. 

However, if Whitney umbrella points are present, this orientation does not globally extend over $\overline{L(f)}$ due to the involution symmetry near those singularities. This issue is resolved when $f$ is a projection of an embedding, that is, $f:M^{4k-1}\hookrightarrow \bbR^{6k}\rightarrow \bbR^{6k-1}$. In this case, we define an \textit{adjusted Ekholm orientation}: assign
the Ekholm orientation to $\overline{L_{+}(f)}$ and the opposite orientation to $\overline{L_{-}(f)}$, yielding $$\overline{L(f)}=\overline{L_{+}(f)}\cup-\overline{L_{-}(f)},$$
ensuring a consistent global orientation, even across Whitney umbrella points. 

For maps of odd codimension (i.e., $\ell=2k+1$), let $f:M^{4k+1}\rightarrow \bbR^{6k+2}$ be a projection of an embedding. Then $L(f)$ can still be oriented by fixing an ordering of the intersecting sheets at each point of $f(L(f))$. Locally, if $X$ and $Y$ are two such sheets intersecting transversely, we orient the preimage of $f(X)\cap f(Y)$ in $X$ according to the orientation of $f(X)\cap f(Y)$; while the corresponding preimage in $Y$ is oriented using the orientation of $f(Y)\cap f(X)$. As a result, these two preimages are mapped to $f(X)\cap f(Y)$ with opposite orientations. 

 For concreteness consider the case $\ell=2$. If $f: M^{3}\rightarrow \bbR^{5} $ is the projection of an embedding $M^{3}\hookrightarrow \bbR^{6}$, then, $\overline{L(f)}$ is a $1$-manifold with a $0$-dimensional submanifold $\Sigma^1(f)$. Since the codimension is even, $\overline{L(f)}$ (and its image) inherit a natural orientation, forming an oriented link in $M$ with  components of three types:
\begin{center}
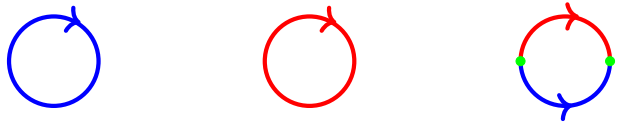

\begin{tikzpicture}[scale=0.85]
\draw[ultra thick, red,  decoration={markings, mark=at position -0.8 with {\arrow{<}}},
        postaction={decorate}]  (0,0) circle (0.7cm); 
\begin{scope}[shift={(-4,0)}]
\draw[ultra thick, blue,  decoration={markings, mark=at position -0.8 with {\arrow{<}}},
        postaction={decorate}]  (0,0) circle (0.7cm); 
\end{scope}
\begin{scope}[shift={(4,0)}]
\draw [ultra thick, red,  decoration={markings, mark=at position -0.5 with {\arrow{<}}},
        postaction={decorate}] (0:0.7) arc [radius=0.7, start angle=0, end angle=180];
  \draw [ultra thick, blue,  decoration={markings, mark=at position -0.45 with {\arrow{>}}},
        postaction={decorate}] (180:0.7) arc [radius=0.7, start angle=180, end angle=360];
  \filldraw [green] (-0.7,0) circle (2pt);
 \filldraw [green] (0.7,0) circle (2pt);
  \end{scope}
 \end{tikzpicture} 
 
\captionof{figure}{Possible components of $\overline{L(f)}$ when $\ell=2$}
\end{center}
We denote the undercrossing in blue, overcrossing in red and the points of Whitney umbrellas in green.
The images of these link components under $f$  in $\bbR^5$ are described in Figure~\ref{fig:images} in which we follow the Ekholm orientation convention.\\

\begin{center}
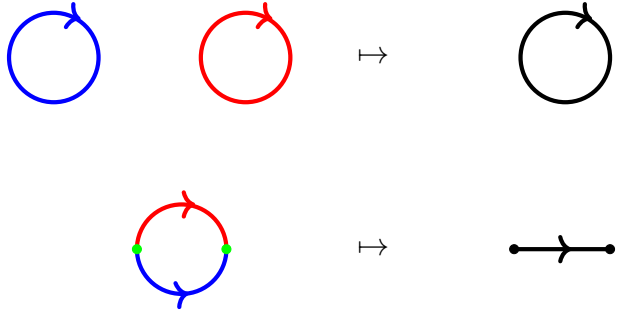

\begin{tikzpicture}[scale=0.85]
\draw[ultra thick, red,  decoration={markings, mark=at position -0.8 with {\arrow{<}}},
        postaction={decorate}]  (-1,0) circle (0.7cm); 
\begin{scope}[shift={(-3,0)}]
\draw[ultra thick, blue,  decoration={markings, mark=at position -0.8 with {\arrow{<}}},
        postaction={decorate}]  (-1,0) circle (0.7cm); 
\end{scope}
\node[ultra thick] at (1,0) {$\mapsto$};
\begin{scope}[shift={(4,0)}]
\draw[ultra thick, decoration={markings, mark=at position -0.8 with {\arrow{<}}},
        postaction={decorate}]  (0,0) circle (0.7cm); 
\end{scope}

\begin{scope}[shift={(-2,-3)}]
\draw [ultra thick, red,  decoration={markings, mark=at position -0.5 with {\arrow{<}}},
        postaction={decorate}] (0:0.7) arc [radius=0.7, start angle=0, end angle=180];
  \draw [ultra thick, blue,  decoration={markings, mark=at position -0.45 with {\arrow{>}}},
        postaction={decorate}] (180:0.7) arc [radius=0.7, start angle=180, end angle=360];
  \filldraw [green] (-0.7,0) circle (2pt);
 \filldraw [green] (0.7,0) circle (2pt);
 \node[ultra thick] at (3,0) {$\mapsto$};
  \draw [ultra thick, decoration={markings, mark=at position -0.4 with {\arrow{>}}},
        postaction={decorate}] (5.2,0)--(6.7,0);
          \filldraw (5.2,0) circle (2pt);
 \filldraw (6.7,0) circle (2pt);
  \end{scope}
 \end{tikzpicture}
  \captionof{figure}{Corresponding $f(\overline{L(f)})$ for $\ell=2$}\label{fig:images}
 \end{center} 
 \end{subsection}
 
\begin{subsection}{Cobordisms induced by isotopy}\label{cobord}

To construct an isotopy invariant, we examine what happens when we isotope $M^{2\ell-1}\hookrightarrow \bbR^{3\ell}$ and consider its projection to $\bbR^{3\ell-1}\times [0,1]$. By \cite[Theorem~4.7.6]{CTC} again, a generic smooth map $H:M^{2\ell-1}\times [0,1] \rightarrow \bbR^{3\ell-1}\times [0,1]$ is an embedding except as follows:
there are double points forming an immersed $\ell$-manifold $L(H)$ with transverse self-intersection, and Whitney umbrella points forming an $(\ell-1)$-submanifold $\Sigma^{1}(H)$. Moreover, now $H(L(H))$ can have a discrete set of triple points with transverse self-intersection. 
Thus, $\overline{L(H)}=L(H)\cup \Sigma^{1}(H)$ is an immersed compact $\ell$-manifold with involution in $M\times [0,1]$ (with the set of fixed points $\Sigma^{1}(H)$), and $H(\overline{L(H)})$ is an immersed $\ell$-manifold in $\bbR^{3\ell-1}\times [0,1].$

Projecting an isotopy yields a cobordism between $\overline{L(f_0)}$ and $\overline{L(f_1)}$ given by $$\overline{L(H)}=\overline{L_{+}(H)}\cup \overline{L_{-}(H)}.$$ 
 Here, $\overline{L_{+}(H)}$ (similarly $\overline{L_{-}(H)}$) is an $\ell$-manifold with corners, with internal boundary $\partial\overline{L_{+}(H)}=\Sigma^{1}(H)$ and other boundary components $\overline{L_{+}(f_0)}$ and $\overline{L_{+}(f_1)}$. Moreover, $\overline{L(H)}$ is a cobordism with involution (appearing as a double of $\overline{L_{+}(H)}$), where Whitney umbrella points define a codimension one submanifold of fixed points.

For instance, when $\ell=2$, we get a $2$-dimensional cobordism $\overline{L(H)}\subset M^3\times[0,1]$ as a double of a manifold $\overline{L_{+}(H)}$ with boundary $\Sigma^1(H)$, whose image under $H$ may have triple points.
In particular, we get the following possible cobordisms as the double~$\overline{L(H)}$ that we compare with the classical case~$\ell=1$. 

1) The analogue of the first Reidemeister move:

\begin{center}
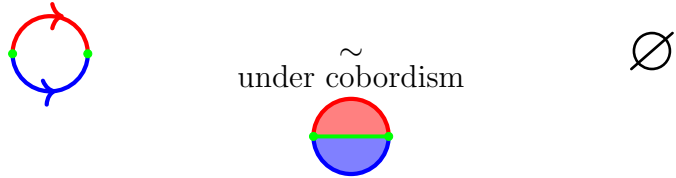

\begin{tikzpicture}
\draw [ultra thick, red,  decoration={markings, mark=at position -0.5 with {\arrow{<}}},
        postaction={decorate}] (0:0.5) arc [radius=0.5, start angle=0, end angle=180];
  \draw [ultra thick, blue,decoration={markings, mark=at position -0.45 with {\arrow{>}}},
        postaction={decorate}] (180:0.5) arc [radius=0.5, start angle=180, end angle=360];
          \filldraw [green] (-0.5,0) circle (1.5pt);
 \filldraw [green] (0.5,0) circle (1.5pt);
 
\begin{scope}[shift={(4,0)}]
 \node (a) {$\sim$};

 \node [below] at (0,0) {under cobordism};
 \end{scope}
 \begin{scope}[shift={(4,-1.1)}]
  \draw [ultra thick, red, fill=red!50](0:0.5) arc [radius=0.5, start angle=0, end angle=180];
  \draw [ultra thick, blue, fill=blue!50] (180:0.5) arc [radius=0.5, start angle=180, end angle=360];
          \draw [green, fill=green!420] (-0.5,0) circle (1.5pt);
 \filldraw [green] (0.5,0) circle (1.5pt);
 \filldraw [green, line width= 1.5pt] (-0.5,0)--(0.5,0);
 \end{scope}
 \node at (8,0) {\huge$\varnothing$}; 

\end{tikzpicture}
\captionof{figure}{First Reidemeister move for $M^3\hookrightarrow \bbR^6$}
\end{center}

\begin{center}
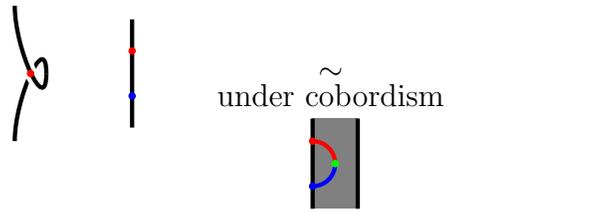

\begin{tikzpicture}[domain=-1:1, scale=0.6, knot gap=3]
      \draw [knot=black, ultra thick] (0.7,0) to [out=up, in=up, looseness=1.5] (0,-1.5);
      \draw [knot=black, ultra thick] (0,1.5) to [out=down, in=down, looseness=1.5] (0.7,0) ;
              \filldraw [red] (0.35,0) circle (2pt);

       \begin{scope}[shift={(-3,2)}]
 \draw [ultra thick] (5.6,-0.8)--(5.6,-3.2);
  \filldraw [red] (5.6,-1.5) circle (2pt);
  \filldraw [blue] (5.6,-2.5) circle (2pt);
    \end{scope}
 \node at (7,0) {$\sim$};

 \node [below] at (7,0) {under cobordism};

 \begin{scope}[shift={(6,0)}]
 \draw [ultra thick] (7,1)--(7,-1);
 \end{scope}

\begin{scope}[shift={(1,0)}]
  \path [fill=gray]  (5.6,-3) rectangle (6.6, -1);

 \draw [ultra thick] (5.6,-1)--(5.6,-3);
  \draw [ultra thick] (6.6,-1)--(6.6,-3);
 
   \filldraw [red] (5.6,-1.5) circle (2pt);
  \filldraw [blue] (5.6,-2.5) circle (2pt);
\draw [ultra thick, red] (5.6,-1.5) arc [radius=0.5, start angle=90, end angle=0];
\draw [ultra thick, blue] (5.6,-2.5) arc [radius=0.5, start angle=-90, end angle=0];
    \filldraw [green] (6.1,-2) circle (2pt);
    \end{scope}
 \end{tikzpicture}
 \captionof{figure}{First Reidemeister move in classical case}
\end{center}

2) The analogue of the second Reidemeister move:

\begin{center}
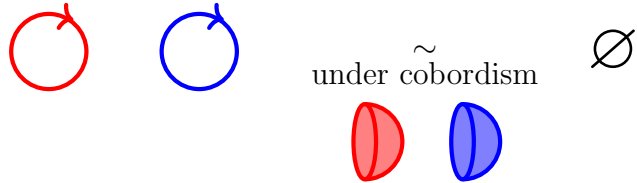

\begin{tikzpicture}
\draw[ultra thick, red, decoration={markings, mark=at position -0.8 with {\arrow{<}}},
        postaction={decorate} ]  (0,0) circle (0.5cm); 
\begin{scope}[shift={(2,0)}]
\draw[ultra thick, blue, decoration={markings, mark=at position -0.8 with {\arrow{<}}},
        postaction={decorate}]  (0,0) circle (0.5cm); 
 \end{scope}
\begin{scope}[shift={(5,0)}]
 \node (a) {$\sim$};
 \node [below] at (0,0) {under cobordism};
\end{scope}
 \node at (7.5,0) {\huge$\varnothing$}; 
 
\draw [ultra thick, red, fill=red!50] (4.2,-0.7) arc [radius=0.5, start angle=90, end angle=-90];
 \draw [ultra thick, red, fill=red!50] (4.2,-1.2) ellipse (0.15cm and 0.5cm);
\draw [ultra thick, blue, fill=blue!50] (5.5,-0.7) arc [radius=0.5, start angle=90, end angle=-90];
 \draw [ultra thick, blue, fill=blue!50] (5.5,-1.2) ellipse (0.15cm and 0.5cm);
\end{tikzpicture}
\captionof{figure}{Second Reidemeister move for $M^3\hookrightarrow \bbR^6$}
\end{center}
\begin{center}
 \begin{tikzpicture}[domain=-1:1, scale=0.6, knot gap=3]
      \draw [knot=black, ultra thick] (0.6,1) to [out=left, in=left, looseness=1.5] (0.6,-1) ;
      \draw [knot=black, ultra thick] (-0.6,1) to [out=right, in=right, looseness=1.5] (-0.6,-1);
                    \filldraw [red] (0,.75) circle (2pt);
              \filldraw [red] (0,-.75) circle (2pt);

\node at (7,0) {$\sim$};
 \node [below] at (7,0) {under cobordism};
\begin{scope}[shift={(4,0)}]
 \draw [ultra thick] (7.5,1)--(7.5,-1);
  \draw [ultra thick] (8.7,1)--(8.7,-1);
 
\end{scope}
\begin{scope}[shift={(-2.5,2)}]
 \draw [ultra thick] (4.8,-1)--(4.8,-3);
 \filldraw [red] (4.8,-1.5) circle (2pt);
  \filldraw [red] (4.8,-2.5) circle (2pt);
  \node [left] at (4.8,-1.5) {\tiny$+$};
\node [left] at (4.8,-2.5) {\tiny$-$};
   \draw [ultra thick] (6,-1)--(6,-3);
    \filldraw [blue] (6,-1.5) circle (2pt);
  \filldraw [blue] (6,-2.5) circle (2pt);
  \node [left] at (6,-1.5) {\tiny$+$};
\node [left] at (6,-2.5) {\tiny$-$};
\end{scope}
\begin{scope}[shift={(1,0)}]
 \path [fill=gray]  (4.8,-3) rectangle (5.8, -1);
 \draw [ultra thick] (4.8,-1)--(4.8,-3);
  \draw [ultra thick] (5.8,-1)--(5.8,-3);
   
 \filldraw [red] (4.8,-1.5) circle (2pt);
  \filldraw [red] (4.8,-2.5) circle (2pt);
  \node [left] at (4.8,-1.5) {\tiny$+$};
\node [left] at (4.8,-2.5) {\tiny$-$};
\draw [ultra thick, red] (4.8,-1.5) arc [radius=0.5, start angle=90, end angle=-90];
     \path [fill=gray]  (7,-3) rectangle (8, -1);
     \draw [ultra thick] (7,-1)--(7,-3);
          \draw [ultra thick] (8,-1)--(8,-3);
           
 \filldraw [blue] (7,-1.5) circle (2pt);
  \filldraw [blue] (7,-2.5) circle (2pt);
  \node [left] at (7,-1.5) {\tiny$+$};
\node [left] at (7,-2.5) {\tiny$-$};

\draw [ultra thick, blue] (7,-1.5) arc [radius=0.5, start angle=90, end angle=-90];
\end{scope}
\end{tikzpicture}
 
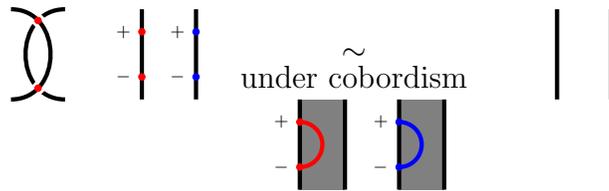
\captionof{figure}{Second Reidemeister move in classical case}
\end{center}

There are other possible embedded cobordisms which do not appear in the classical case. 

3) Saddle cobordism  containing points of Whitney umbrella:
 \begin{figure}[htbp]
\begin{center}
 \includegraphics[width=0.3\textwidth]{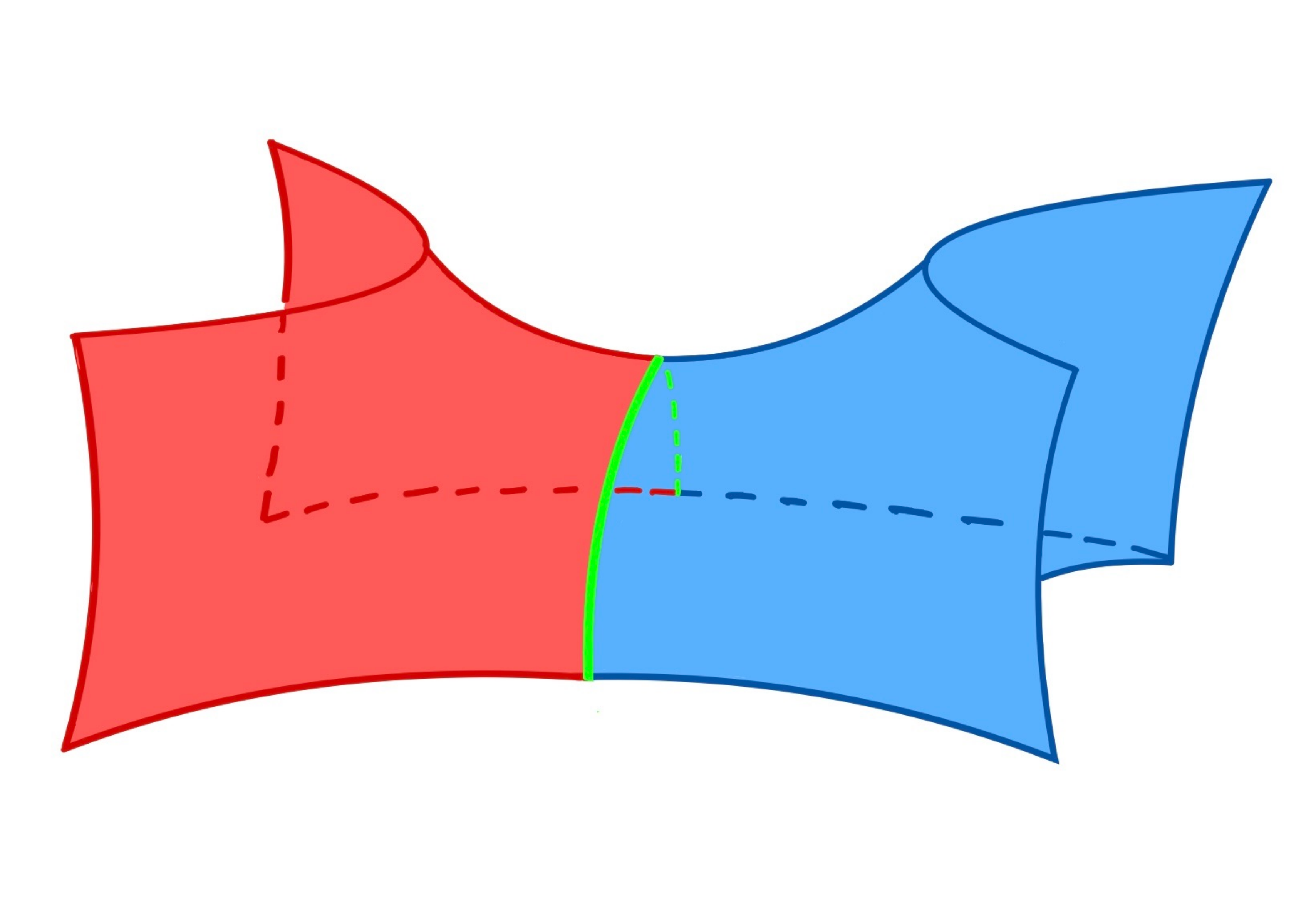}
 \end{center}
 \end{figure}

\begin{center}
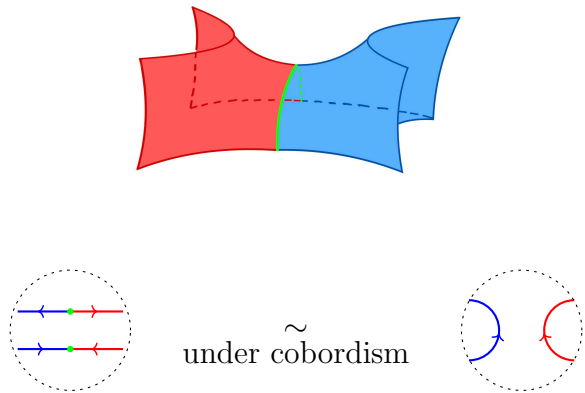

\begin{tikzpicture}
\draw[ thick, red, decoration={markings, mark=at position -0.4 with {\arrow[scale= 0.7]{<}}},
        postaction={decorate}] (0.7,0.25)--(0,0.25);
\draw[thick, blue, decoration={markings, mark=at position -0.5 with {\arrow[scale= 0.7]{<}}},
        postaction={decorate}] (-0.7,0.25)--(0,0.25);
\filldraw[green] (0,0.25) circle (1pt);
\draw[thick, red, decoration={markings, mark=at position -0.4 with {\arrow[scale= 0.7]{>}}},
        postaction={decorate}] (0.7,-0.25)--(0,-0.25);
\draw[ thick, blue, decoration={markings, mark=at position -0.5 with {\arrow[scale= 0.7]{>}}},
        postaction={decorate}] (-0.7,-0.25)--(0,-0.25);
\filldraw[green] (0,-0.25) circle (1pt);
\draw [dashed, dash pattern=on 1pt off 2pt] (0,0) circle (0.8cm);

 \node at (3,0) {$\sim$};

 \node [below] at (3,0) {under cobordism};
 \begin{scope}[shift={(6,0)}]
\draw [ thick, red, decoration={markings, mark=at position -0.4 with {\arrow[scale= 0.7]{<}}},
        postaction={decorate}] (0.7,0.4) arc [radius=0.4, start angle=90, end angle=270];
\draw [ thick, blue, decoration={markings, mark=at position -0.4 with {\arrow[scale= 0.7]{<}}},
        postaction={decorate}] (-0.7,0.4) arc [radius=0.4, start angle=90, end angle=-90];
\draw [dashed,dash pattern=on 1pt off 2pt] (0,0) circle (0.8cm);
\end{scope}
\end{tikzpicture}
\captionof{figure}{Saddle (and its local picture) with Whitney umbrella points}
\end{center}

This cobordism can be used to get rid of Whitney umbrella points.

4) Two saddle cobordisms occurring simultaneously:

\begin{figure}[htbp]
\begin{center}
 \includegraphics[width=0.5\textwidth]{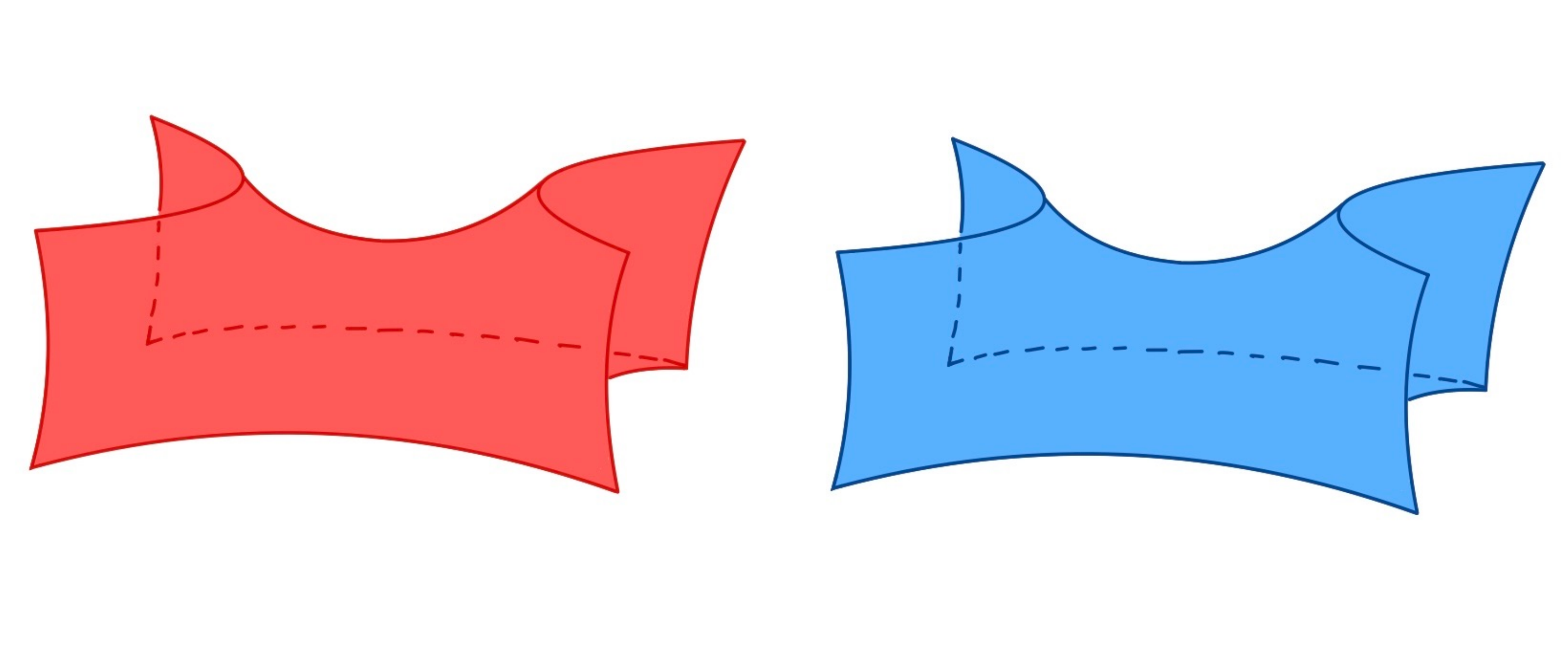}
 \end{center}
\end{figure}

\begin{center}
\begin{tikzpicture}
\draw[ thick, red, decoration={markings, mark=at position -0.4 with {\arrow[scale= 0.7]{>}}},
        postaction={decorate}] (0.7,0.25)--(-0.7,0.25);

\draw[thick, red, decoration={markings, mark=at position -0.4 with {\arrow[scale= 0.7]{<}}},
        postaction={decorate}] (0.7,-0.25)--(-0.7,-0.25);

\draw [dashed,dash pattern=on 1pt off 2pt] (0,0) circle (0.8cm);

 \node at (3,-1) {$\sim$};

 \node [below] at (3,-1) {under cobordism};
 \begin{scope}[shift={(6,0)}]
\draw [ thick, red, decoration={markings, mark=at position -0.4 with {\arrow[scale= 0.7]{>}}},
        postaction={decorate}] (0.7,0.4) arc [radius=0.4, start angle=90, end angle=270];
\draw [ thick, red, decoration={markings, mark=at position -0.4 with {\arrow[scale= 0.7]{<}}},
        postaction={decorate}] (-0.7,0.4) arc [radius=0.4, start angle=90, end angle=-90];
\draw [dashed,dash pattern=on 1pt off 2pt] (0,0) circle (0.8cm);
\end{scope}
\begin{scope}[shift={(0,-2)}]
\draw[ thick, blue, decoration={markings, mark=at position -0.4 with {\arrow[scale= 0.7]{>}}},
        postaction={decorate}] (0.7,0.25)--(-0.7,0.25);

\draw[thick, blue, decoration={markings, mark=at position -0.4 with {\arrow[scale= 0.7]{<}}},
        postaction={decorate}] (0.7,-0.25)--(-0.7,-0.25);

\draw [dashed, dash pattern=on 1pt off 2pt] (0,0) circle (0.8cm);
\begin{scope}[shift={(6,0)}]
\draw [ thick, blue,  decoration={markings, mark=at position -0.4 with {\arrow[scale= 0.7]{>}}},
        postaction={decorate}] (0.7,0.4) arc [radius=0.4, start angle=90, end angle=270];
\draw [ thick, blue,  decoration={markings, mark=at position -0.4 with {\arrow[scale= 0.7]{<}}},
        postaction={decorate}] (-0.7,0.4) arc [radius=0.4, start angle=90, end angle=-90];
\draw [dashed, dash pattern=on 1pt off 2pt] (0,0) circle (0.8cm);
\end{scope}
\end{scope}
\end{tikzpicture}

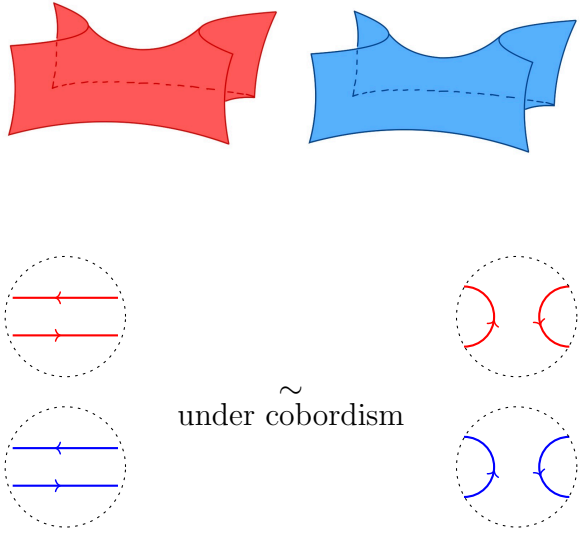
\captionof{figure}{Saddles (and their local pictures) that occur simultaneously}
\end{center}

In addition to embedded cobordisms, one has  a high-dimensional analogue of the third Reidemeister move, corresponding to the appearance of a triple point in the projection of an isotopy of $M^{2\ell-1}\hookrightarrow \bbR^{3\ell}$ to $\bbR^{3\ell-1}$. Consider $\bbR^\ell_x\times \bbR^\ell_y\times \bbR^\ell_z$ as the space-time model,  where the time coordinate $t=\sum_{i=1}^\ell x_i +\sum_{i=1}^\ell y_i+\sum_{i=1}^\ell z_i$ is the sum of all coordinates. The triple point move can be described by the following picture: 
\begin{center}
 \begin{tikzpicture}[domain=-1:1, scale=0.6, knot gap=3]
 
  \draw [knot=black, ultra thick] (-1,1)--(1.2,-1);
      \node[above] at (-1,1) {\tiny $3$};
      \draw [knot=black, ultra thick] (1.2,1)--(-1,-1);
            \node[above] at (1.2,1) {\tiny $2$};
      \draw [knot=black, ultra thick] (0.1,1)--(0.1,-1);
      \node[above] at (0.1,1) {\tiny $1$};
\node at (4,0) {$\longrightarrow$};
\begin{scope}[shift={(-7.5,0)}]
      \draw [knot=black, ultra thick] (-1,1)--(1.2,-1);
      \node[above] at (-1,1) {\tiny $3$};
      \draw [knot=black, ultra thick] (1.2,1)--(-1,-1);
            \node[above] at (1.2,1) {\tiny $2$};
      \draw [knot=black, ultra thick] (-0.5,1)--(-0.5,-1);
      \node[above] at (-0.5,1) {\tiny $1$};
      \node[left] at (-0.5, 0.5) {\tiny A};
\node[left] at (-0.5, -0.5) {\tiny B};
\node at (0.5,0.05) {\tiny C};
\end{scope}

\node at (-4,0) {$\longrightarrow$};

\begin{scope}[shift={(7.5,0)}]
\begin{knot}[
    clip width=3, consider self intersections=true,
  ignore endpoint intersections=false, flip crossing=3
    ]
    \strand [ultra thick] (0.5,1)-- (0.5,-1);
       
        \node[above] at (1,1) {\tiny $2$};
      \strand [ultra thick] (-1.2,1)--(1,-1);
         \strand [ultra thick] (1,1)--(-1.2,-1);
      \node[above] at (-1.2,1) {\tiny $3$};

     \node[above] at (0.5,1) {\tiny $1$};
        \node[right] at (0.5, 0.5) {\tiny B};
\node[right] at (0.5, -0.5) {\tiny A};
\node[above] at (-0.5,-0.3) {\tiny C};
  \end{knot}
\end{scope}
\end{tikzpicture}

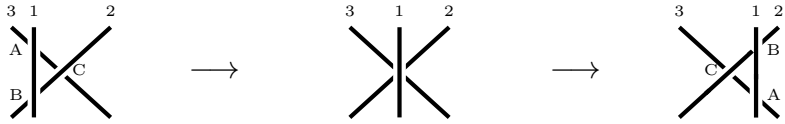
\captionof{figure}{Third Reidemeister move for $M^{2\ell-1}\hookrightarrow \bbR^{3\ell}$}
\end{center}

where sheets $1, 2$ and $3$ are given by $0 \times \bbR^\ell_y\times \bbR^\ell_z$, $\bbR^\ell_x\times 0\times \bbR^\ell_z$, and $\bbR^\ell_x\times \bbR^\ell_y\times 0$, respectively.  Let $A, B$, and $C$ denote the $(\ell-1)$-dimensional double intersections of $1\cap 3$, $1\cap 2$ and $2\cap 3$, respectively, when projected to $\bbR^{3\ell-1}$. Locally, at the moment of the triple point, the three preimages on the sheets appear as double intersections inside $(2\ell-1)$-balls, given as follows:
\begin{figure}[h]
    \centering
    \includegraphics[width=0.72\linewidth]{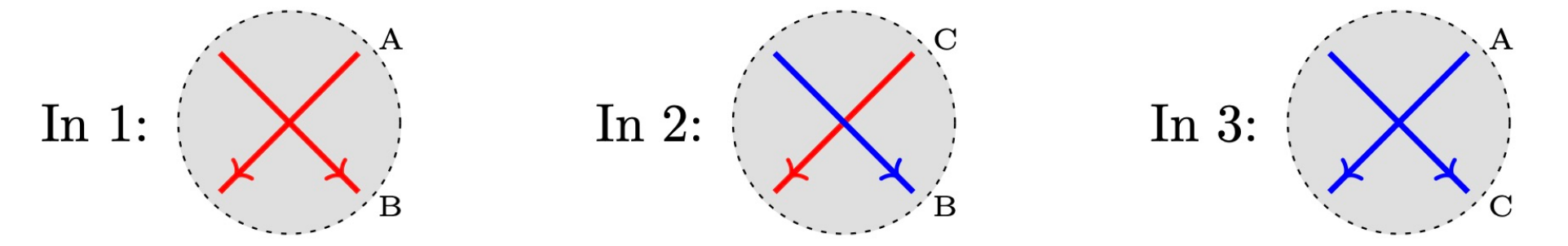}
    \caption{The preimages at the triple point}
\end{figure}

Passing through the triple point corresponds to a simultaneous local crossing change of the double intersection in each of theses balls, as shown in the figure below.
\begin{center}
\begin{tikzpicture}

\begin{knot}[ignore endpoint intersections]

\filldraw [gray!25] (0.2,0) circle (0.8cm);
\draw [dashed,dash pattern=on 1pt off 2pt] (0.2,0) circle (0.8cm);
 \draw [red, very thick, decoration={markings, mark=at position 0.9 with {\arrow{>}}},
        postaction={decorate}] (0.7, 0.5)--(-0.3, -0.5);
\node[right] at (0.7,0.6) {\tiny A};
\draw [red, very thick, decoration={markings, mark=at position 0.9 with  {\arrow{>}}},
        postaction={decorate}] (-0.3,0.5)--(0.7,-0.5);
\node[right] at (0.7,-0.6) {\tiny B};
\end{knot}
\begin{scope}[shift={(-4,0)}];
\node at (2.2,0) {$\longrightarrow$};

\begin{knot}[
    clip width=5, consider self intersections=true,
  ignore endpoint intersections=false, flip crossing=3, background color=gray!25!white
    ]
        \filldraw [gray!25] (0.2,0) circle (0.8cm);
\draw [dashed,dash pattern=on 1pt off 2pt]  (0.2,0) circle (0.8cm);

\strand [draw=red, very thick, only when rendering/.style={
    postaction=decorate,
  },
  decoration={
    markings,
    mark=at position 0.9 with {\arrow{To}}
  }] (0.7, 0.5)--(-0.3, -0.5);
\node[right] at (0.7,0.6) {\tiny A};
\strand [draw=red, very thick, only when rendering/.style={
    postaction=decorate,
  },
  decoration={
    markings,
    mark=at position 0.9 with {\arrow{To}}
  }] (-0.3,0.5)--(0.7,-0.5);
\node[right] at (0.7,-0.6) {\tiny B};

\node[thick] at (-1.2,0) {\small $1$};
\end{knot}
\end{scope}
\begin{scope}[shift={(4,0)}];
\node at (-2,0) {$\longrightarrow$};

\begin{knot}[
    clip width=5, background color=gray!25!white]
\filldraw [gray!25] (0.2,0) circle (0.8cm);
\draw [dashed,dash pattern=on 1pt off 2pt] (0.2,0) circle (0.8cm);
\strand[red, very thick, only when rendering/.style={
    postaction=decorate,
  },
  decoration={
    markings,
    mark=at position 0.9 with {\arrow{To}}
  }] (-0.3,0.5)--(0.7,-0.5);
\node[right] at (0.7,0.6) {\tiny A};
\strand [red, very thick, only when rendering/.style={
    postaction=decorate,
  },
  decoration={
    markings,
    mark=at position 0.9 with {\arrow{To}}
  }] (0.7, 0.5)--(-0.3, -0.5);
\node[right] at (0.7,-0.6) {\tiny B};

\end{knot}

\end{scope}

\begin{scope}[shift={(0,-2)}];
\begin{knot}[ignore endpoint intersections]
\filldraw [gray!25] (0.2,0) circle (0.8cm);
\draw [dashed,dash pattern=on 1pt off 2pt]  (0.2,0) circle (0.8cm);
 \draw [red,very thick,decoration={markings, mark=at position 0.9 with {\arrow{>}}},
        postaction={decorate}] (0.7, 0.5)--(-0.3, -0.5);
\node[right] at (0.7,0.6) {\tiny C};
\draw [blue, very thick,decoration={markings, mark=at position 0.9 with {\arrow{>}}},
        postaction={decorate}] (-0.3,0.5)--(0.7,-0.5);
\node[right] at (0.7,-0.6) {\tiny B};

\end{knot}
\end{scope}

\begin{scope}[shift={(4,-2)}];
\node at (-2,-2) {$\longrightarrow$};

\begin{knot}[
    clip width=5, background color=gray!25!white]
\filldraw [gray!25] (0.2,0) circle (0.8cm);
\draw [dashed,dash pattern=on 1pt off 2pt] (0.2,0) circle (0.8cm);
\strand [blue, very thick, only when rendering/.style={
    postaction=decorate,
  },
  decoration={
    markings,
    mark=at position 0.9 with {\arrow{To}}
  }] (-0.3,0.5)--(0.7,-0.5);
\node[right] at (0.7,-0.6) {\tiny B};
\strand [red, very thick, only when rendering/.style={
    postaction=decorate,
  },
  decoration={
    markings,
    mark=at position 0.9 with {\arrow{To}}
  }] (0.7, 0.5)--(-0.3, -0.5);
\node[right] at (0.7,0.6) {\tiny C};

\end{knot}
\end{scope}

\begin{scope}[shift={(-4,-2)}];
\node at (2.2,-2) {$\longrightarrow$};

\begin{knot}[
    clip width=5, background color=gray!25!white]
\filldraw [gray!25] (0.2,0) circle (0.8cm);
\draw [dashed,dash pattern=on 1pt off 2pt] (0.2,0) circle (0.8cm);
\strand [red, very thick, only when rendering/.style={
    postaction=decorate,
  },
  decoration={
    markings,
    mark=at position 0.9 with {\arrow{To}}
  }] (0.7, 0.5)--(-0.3, -0.5);
\node[right] at (0.7,-0.6) {\tiny B};
\strand [blue, very thick, only when rendering/.style={
    postaction=decorate,
  },
  decoration={
    markings,
    mark=at position 0.9 with {\arrow{To}}
  }](-0.3,0.5)--(0.7,-0.5);
\node[right] at (0.7,0.6) {\tiny C};
\node[thick] at (-1.2,0) {\small $2$};

\end{knot}
\end{scope}

\begin{scope}[shift={(0,-4)}];
\begin{knot}[ignore endpoint intersections]
\filldraw [gray!25] (0.2,0) circle (0.8cm);
\draw [dashed,dash pattern=on 1pt off 2pt]  (0.2,0) circle (0.8cm);
 \draw [blue, very thick, decoration={markings, mark=at position 0.9 with {\arrow{>}}},
        postaction={decorate}] (0.7, 0.5)--(-0.3, -0.5);
\node[right] at (0.7,0.6) {\tiny A};
\draw [blue, very thick,decoration={markings, mark=at position 0.9 with {\arrow{>}}},
        postaction={decorate}] (-0.3,0.5)--(0.7,-0.5);
\node[right] at (0.7,-0.6) {\tiny C};
\
\end{knot}
\end{scope}

\begin{scope}[shift={(4,-4)}];
\node at (-2,2) {$\longrightarrow$};

\begin{knot}[
    clip width=5, background color=gray!25!white]
\filldraw [gray!25] (0.2,0) circle (0.8cm);
\draw [dashed,dash pattern=on 1pt off 2pt]  (0.2,0) circle (0.8cm);
\strand [blue, very thick, only when rendering/.style={
    postaction=decorate,
  },
  decoration={
    markings,
    mark=at position 0.9 with {\arrow{To}}
  }] (-0.3,0.5)--(0.7,-0.5);
\node[right] at (0.7,-0.6) {\tiny C};
\strand [blue, very thick, only when rendering/.style={
    postaction=decorate,
  },
  decoration={
    markings,
    mark=at position 0.9 with {\arrow{To}}
  }] (0.7, 0.5)--(-0.3, -0.5);
\node[right] at (0.7,0.6) {\tiny A};

\end{knot}
\end{scope}

\begin{scope}[shift={(-4,-4)}];
\node at (2.2,2) {$\longrightarrow$};
\filldraw [gray!25] (0.2,0) circle (0.8cm);
\draw [dashed,dash pattern=on 1pt off 2pt]  (0.2,0) circle (0.8cm);
\begin{knot}[
    clip width=5, background color=gray!25!white]

\strand [blue, very thick, only when rendering/.style={
    postaction=decorate,
  },
  decoration={
    markings,
    mark=at position 0.9 with {\arrow{To}}
  }] (0.7, 0.5)--(-0.3, -0.5);
\node[right] at (0.7,-0.6) {\tiny C};
\strand [blue, very thick, only when rendering/.style={
    postaction=decorate,
  },
  decoration={
    markings,
    mark=at position 0.9 with {\arrow{To}}
  }](-0.3,0.5)--(0.7,-0.5);
\node[right] at (0.7,0.6) {\tiny A};
\node[thick] at (-1.2,0) {\small $3$};

\end{knot}
\end{scope}
\end{tikzpicture}

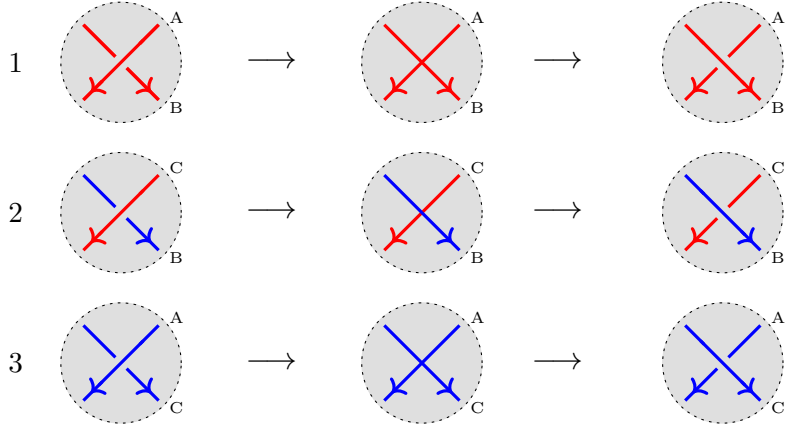
\captionof{figure}{Local picture of the triple point move}
\label{fig:3rd}
\end{center}

For more details, we refer to the proof of Theorem~\ref{inv1}. 
\end{subsection}
\end{section}

\begin{section}{Link invariants}\label{Section3}

We now focus on the link spaces \sloppy $Emb(\underset{k}{\sqcup}S^{2\ell-1},\mathbb{R}^{3\ell})$ and $Emb(\underset{k}{\sqcup}\mathbb{R}^{2\ell-1},\mathbb{R}^{3\ell})$ 
and first discuss the splitting of their sets of connected components for $\ell\geq 2$. 

It is known from \cite[Lemma~3.7]{SONG}, \cite[Lemma~4.8]{RK} that for $q\geq 3$ and any $k\in \mathbb{N}$, we have:  
\begin{equation}\label{iso}
\pi_0Emb(\underset{k}{\sqcup}S^n,\bbR^{n+q})=\pi_0Emb_{\partial}(\underset{k}{\sqcup}\bbR^n,\bbR^{n+q}).
\end{equation}
Moreover, it is an isomorphism of finitely generated abelian groups
 (see \cite{HAE, HAE4}). By~\cite[\S 2.5]{HAE4}, the product on $\pi_0Emb(\underset{k}{\sqcup}S^n,\bbR^{n+q})$ is given by component-wise connected sum along thin tubes. Since the complement is simply connected, the paths of the tubes do not matter.  The product in $\pi_0Emb_{\partial}(\underset{k}{\sqcup}\bbR^n,\bbR^{n+q})$ is induced by the concatenation of links. In addition, there are well-defined tubing homomorphisms 
 \begin{equation}\label{eq:tube}
 \mathcal{T}_{ij}:\pi_0Emb(\underset{k}{\sqcup}S^n,\bbR^{n+q})\rightarrow \pi_0Emb(\underset{k-1}{\sqcup}S^n,\bbR^{n+q}),\,\, 1\leq i< j\leq k,
 \end{equation}
 which merge $i$-th and $j$-th components of a link into one by joining them with a tube.

 \begin{subsection}{Brunnian splitting}
A \textit{Brunnian link} is a non-trivial link that becomes trivial if any of its components is removed. Let $\pi^{br}_0Emb(\underset{k}{\sqcup}S^n,\bbR^{n+q})$ denote the subgroup of $\pi_0Emb(\underset{k}{\sqcup} S^n,\bbR^{n+q})$ consisting of Brunnian links for $k\geq 2$. For $k=1$ we set $\pi_0^{br}Emb(S^n,\bbR^{n+q})=\pi_0Emb(S^n,\bbR^{n+q})$. 
 
 \begin{proposition}\thlabel{split}
 For $q\geq 3$, 
  \begin{equation*}
\pi_0Emb(\underset{k}{\sqcup}S^n,\bbR^{n+q})=\underset{i=1}{\overset{k}{\bigoplus}}\Big(\underset{\binom ki}{\oplus}\pi^{br}_0Emb(\underset{i}{\sqcup}S^n,\bbR^{n+q})\Big)
\end{equation*}

 \end{proposition}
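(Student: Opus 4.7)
The plan is to exhibit an inverse pair of homomorphisms. For each nonempty subset $S\subseteq[k]$, let $\rho_S:\pi_0Emb(\underset{k}{\sqcup}S^n,\bbR^{n+q})\to\pi_0Emb(\underset{|S|}{\sqcup}S^n,\bbR^{n+q})$ denote the restriction homomorphism forgetting the components indexed by $[k]\setminus S$, and let $\iota_S$ be the section inserting small, unknotted, mutually unlinked spheres (also unlinked from the original link) at each position in $[k]\setminus S$; this is a well-defined homomorphism on $\pi_0$ since $q\geq 3$ makes the placement immaterial up to isotopy. Define the homomorphism
\[
\Psi:\bigoplus_{\emptyset\neq S\subseteq[k]}\pi^{br}_0Emb(\underset{|S|}{\sqcup}S^n,\bbR^{n+q})\to\pi_0Emb(\underset{k}{\sqcup}S^n,\bbR^{n+q}),\quad (L_S)_S\mapsto\sum_S\iota_S(L_S).
\]
I construct its candidate inverse $B=(B_S)_S$ recursively by setting $B_{\{i\}}(L)=\rho_{\{i\}}(L)$ and, for $|S|\geq 2$,
\[
B_S(L)=\rho_S(L)-\sum_{\emptyset\neq T\subsetneq S}\iota^S_T(B_T(L)),
\]
where $\iota^S_T$ inserts trivial components at the positions in $S\setminus T$.

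The first thing to verify is that $B_S(L)$ is Brunnian. I induct on $|S|$: for $j\in S$, apply $R_j$ (forgetting the $j$-th component of an $|S|$-component link) to the recursion. Any term $\iota^S_T(B_T(L))$ with $j\in T$ vanishes because $R_j$ is then forgetting an actual component of the Brunnian link $B_T(L)$, while a term with $j\notin T$ reduces to $\iota^{S\setminus\{j\}}_T(B_T(L))$ since $R_j$ merely deletes a trivial inserted component. Collecting, this yields
\[
R_j(B_S(L))=\rho_{S\setminus\{j\}}(L)-\sum_{\emptyset\neq T\subseteq S\setminus\{j\}}\iota^{S\setminus\{j\}}_T(B_T(L)),
\]
and splitting off the $T=S\setminus\{j\}$ term and comparing with the defining recursion for $B_{S\setminus\{j\}}(L)$ gives $R_j(B_S(L))=B_{S\setminus\{j\}}(L)-B_{S\setminus\{j\}}(L)=0$, as required.

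The identity $\Psi\circ B=\mathrm{id}$ is immediate by specialising the recursion to $S=[k]$. For $B\circ\Psi=\mathrm{id}$, the essential computation is that, for any Brunnian $L_T$, the restriction $\rho_S\circ\iota_T(L_T)$ equals $\iota^S_T(L_T)$ when $T\subseteq S$ and vanishes otherwise -- in the latter case $\rho_S$ forgets at least one component of $L_T$, trivialising that Brunnian sublink. A short induction on $|S|$, combining this identity with the recursion, then shows $B_S(\Psi(\vec L))=L_S$ for every input, establishing the desired isomorphism. The only subtle point is precisely this Möbius-style collapse $\rho_S\circ\iota_T=0$ for $T\not\subseteq S$, which is where the Brunnian hypothesis is used essentially; once it is in place, the remainder is formal inclusion-exclusion on the Boolean lattice of subsets of $[k]$.
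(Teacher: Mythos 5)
Your proof is correct. It relies on exactly the same ingredients as the paper's argument --- the restriction homomorphisms $\rho_S$ forgetting components, the sections $\iota_S$ inserting distant unknots (available since $q\geq 3$), and the observation that forgetting a component of a Brunnian sublink kills it --- but it is organized differently. The paper proceeds by an iterative filtration: first split off $\bigoplus_i \pi_0Emb(S_i^n,\bbR^{n+q})$ via the single-component restrictions and their section, then repeat on the kernel $\pi_0^{U}Emb$ using pairwise restrictions, and so on one cardinality at a time, with the Brunnian summands appearing as successive kernels/cokernels. You instead write a single explicit M\"obius-inversion formula on the Boolean lattice of subsets of $[k]$, defining the projections $B_S$ by the inclusion--exclusion recursion and verifying both composites $\Psi\circ B=\mathrm{id}$ and $B\circ\Psi=\mathrm{id}$ directly. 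The mathematical content is the same, and the two approaches are equivalent up to re-bracketing the induction, but yours has the small advantage of producing closed formulas for the projection onto each Brunnian summand; the paper's is perhaps slightly shorter to write down since at each stage it only needs the abstract existence of a retraction. One point worth stating explicitly in your write-up (you use it but gloss over it): when $|T|=1$ the term $\iota^S_T(B_T(L))$ has $B_T(L)$ a possibly nontrivial knot, yet forgetting that single component still produces the trivial $(|S|-1)$-unlink because $\iota^S_T$ makes the remaining components split unknots --- so the degenerate base of the Brunnian induction goes through, just by a slightly different mechanism.
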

 \begin{proof}
 Consider the homomorphism $$\pi_0Emb(\underset{k}{\sqcup} S^n,\bbR^{n+q})\rightarrow \underset{i=1}{\overset{k}{\bigoplus}}\pi_0Emb(S_i^n,\bbR^{n+q})$$ induced by the maps where we forget all but one component. The kernel of this map is $\pi_0^{U}Emb(\underset{k}{\sqcup} S^n,\bbR^{n+q})$, the subgroup consisting of $k$-component links where each component is unknotted. Moreover, there is a map 
\sloppy
$\underset{i=1}{\overset{k}{\bigoplus}}\pi_0Emb(S_i^n,\bbR^{n+q})\rightarrow \pi_0Emb(\underset{k}{\sqcup} S^n,\bbR^{n+q})$ that creates a $k$-component link where each $S_i^n$ is contained in a small $(n+q)$-ball disjoint from the other balls. Thus, $\underset{i=1}{\overset{k}{\bigoplus}}\pi_0Emb(S_i^n,\bbR^{n+q})$ is a retract of the group $\pi_0Emb(\underset{k}{\sqcup} S^n,\bbR^{n+q})$, and we get the following splitting 
 \begin{equation}\label{splitting}
 \pi_0Emb(\underset{k}{\sqcup}S^n,\bbR^{n+q})=\Big(\underset{i=1}{\overset{k}{\bigoplus}}\pi_0Emb(S_i^n,\bbR^{n+q})\Big)\oplus \pi_0^{U}Emb(\underset{k}{\sqcup} S^n,\bbR^{n+q}).
 \end{equation}

We can further continue the splitting in \eqref{splitting} as follows. For any pair $\{i,j\}\subset\{1,\ldots,k\}$, we consider a map 
\sloppy
$\pi_0^{U}Emb(\underset{k}{\sqcup} S^n,\bbR^{n+q})\rightarrow \pi_0^{U}Emb(\underset{2}{\sqcup} S^n,\bbR^{n+q})=\pi_0^{br}Emb(\underset{2}{\sqcup} S^n,\bbR^{n+q})$ induced by removing all but the $i$-th and $j$-th components. There are $\binom k2$ choices of such maps. Thus, we get a map $\pi_0^{U}Emb(\underset{k}{\sqcup} S^n,\bbR^{n+q})\rightarrow \underset{\binom k2}{\oplus}\pi^{br}_0Emb(\underset{2}{\sqcup}S^n,\bbR^{n+q})$, where the kernel is given by the subgroup of links so that upon removing all but (any) two components, the resulting link is trivial. We can continue to split the obtained group and iteratively doing so we get
$\pi_0^{U}Emb(\underset{k}{\sqcup}S^n,\bbR^{n+q})=\underset{i=2}{\overset{k}{\bigoplus}}\Big(\underset{\binom ki}{\oplus}\pi^{br}_0Emb(\underset{i}{\sqcup}S^n,\bbR^{n+q})\Big)$, which concludes the result.
\end{proof}

\begin{cor}
    For $\ell\geq 2$,
    \begin{equation}
\pi_0Emb(\underset{k}{\sqcup}S^{2\ell-1},\mathbb{R}^{3\ell})=\underset{i=1}{\overset{3}{\bigoplus}}\Big(\underset{\binom 3i}{\oplus}\pi^{br}_0Emb(\underset{i}{\sqcup}S^{2\ell-1},\mathbb{R}^{3\ell})\Big)
\end{equation}
\end{cor}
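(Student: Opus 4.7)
The plan is to specialize Proposition~\ref{split} to $n=2\ell-1$ and $q=\ell+1\geq 3$, obtaining
$$
\pi_0Emb(\underset{k}{\sqcup}S^{2\ell-1},\mathbb{R}^{3\ell})=\underset{i=1}{\overset{k}{\bigoplus}}\Big(\underset{\binom ki}{\oplus}\pi^{br}_0Emb(\underset{i}{\sqcup}S^{2\ell-1},\mathbb{R}^{3\ell})\Big),
$$
and then to truncate the sum at $i=3$ by proving $\pi^{br}_0Emb(\underset{i}{\sqcup}S^{2\ell-1},\mathbb{R}^{3\ell})=0$ whenever $i\geq 4$. That vanishing is the only nonformal step.

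To show it, I would start with a Brunnian $i$-link $L$ with $i\geq 4$. By Brunnianness, every $(i-1)$-sublink is trivial and each component is individually unknotted. Isotope the first $i-1$ components to a standard round unlink $T\subset \bbR^{3\ell}$; by Alexander duality the complement $\bbR^{3\ell}\setminus T$ is homotopy equivalent to $\bigvee_{i-1}S^{\ell}$. Since $2q=n+3$, we are at the border of Haefliger's metastable range, so his link classification identifies the isotopy class of the $i$-th component in $\bbR^{3\ell}\setminus T$ with an element $\alpha\in \pi_{2\ell-1}\bigl(\bigvee_{i-1}S^{\ell}\bigr)$.

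Next I would apply Hilton's theorem: this homotopy group decomposes as a direct sum over basic Whitehead products, with an $r$-fold product contributing $\pi_{2\ell-1}(S^{r\ell-(r-1)})$. For $\ell\geq 2$ and $r\geq 3$ one has $r\ell-r+1\geq 2\ell+1>2\ell-1$, so only $r=1,2$ can contribute. Thus $\alpha$ is a sum of terms each supported on at most two of the $i-1$ wedge factors. The Brunnian hypothesis says the image of $\alpha$ under each ``collapse one factor'' projection $\bigvee_{i-1}S^\ell\to \bigvee_{i-2}S^\ell$ vanishes. As $i-1\geq 3$, every individual term of $\alpha$ has an uninvolved factor whose collapse preserves it, forcing that term to be zero. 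Hence $\alpha=0$, $L$ is trivial, and the Brunnian group vanishes.

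The main obstacle is importing the Haefliger-type identification between isotopy classes of spherical embeddings in $\bbR^{3\ell}\setminus T$ and elements of $\pi_{2\ell-1}(\bigvee_{i-1}S^\ell)$ precisely at the boundary $2q=n+3$ of the metastable range; this has to be invoked carefully from Haefliger's link theory rather than re-derived. Once that identification is in hand, the Hilton dimension count and the Brunnian collapse argument are purely formal.
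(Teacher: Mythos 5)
The paper proves this corollary in one sentence, by citing Theorem~9.4 of Haefliger's paper on links: the Brunnian subgroup $\pi^{br}_0Emb(\underset{i}{\sqcup}S^{2\ell-1},\bbR^{3\ell})$ vanishes for $i>3$, so the general Brunnian splitting truncates. Your proposal instead sketches a re-derivation of that vanishing from the ground up: reduce to classifying the last component in the complement of a standard unlink, invoke a Haefliger-type bijection of isotopy classes with $\pi_{2\ell-1}\bigl(\bigvee_{i-1}S^{\ell}\bigr)$, and then run Hilton's theorem together with the Brunnian-collapse argument. The Hilton and collapse parts are sound, modulo a small numerical slip: what is needed is $r\ell-(r-1)>2\ell-1$ for $r\geq 3$, equivalently $(r-2)(\ell-1)>0$, which holds for all $\ell\geq 2$; your intermediate bound $r\ell-(r-1)\geq 2\ell+1$ fails at $\ell=2$, $r=3$ (where $3\ell-2=4<5$), though the strict inequality you actually use does hold.

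The substantive issue is the one you flag yourself, and it is a genuine gap. At the border $2q=n+3$ of the metastable range, passing from isotopy to homotopy of the last component in $\bbR^{3\ell}\setminus T$ is not automatic: already $\pi_0Emb(S^{2\ell-1},\bbR^{3\ell})$ is nontrivial and invisible to homotopy, and for links in the unlink complement there are further Haefliger-type corrections. Establishing that the Brunnian part of the link group is faithfully detected by $\pi_{2\ell-1}$ of the unlink complement is precisely the content of Haefliger's link classification, so invoking that identification amounts to re-citing the very theorem the paper quotes; your Hilton computation only illuminates why the dimension counts inside that theorem produce vanishing for $i\geq 4$. As written the proposal is therefore not a self-contained alternative. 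To close it one would need to set up Haefliger's framework and prove the required injectivity on the Brunnian pieces for $i\geq4$ components, at which point the paper's direct citation of Theorem~9.4 is the shorter and cleaner route.
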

\begin{proof}
    By \cite[Theorem 9.4]{HAE4}, the subgroup $\pi^{br}_0Emb(\underset{i}{\sqcup}S^{2\ell-1},\bbR^{3\ell})$ is trivial for $i>3$. 
\end{proof}

 In particular, for $2$- and $3$-component links, we obtain the following:
\begin{equation}\label{brsplit2}
\pi_0Emb(\underset{2}{\sqcup}S^{2\ell-1},\bbR^{3\ell})=
\Big(\underset{2}{\oplus}\pi_0Emb(S^{2\ell-1},\bbR^{3\ell})\Big)\oplus \Big( \pi_{0}^{br}Emb(\underset{2}{\sqcup}S^{2\ell-1},\bbR^{3\ell})\Big),
\end{equation}
and 
\begin{multline}\label{br3}
\pi_0Emb(\underset{3}{\sqcup}S^{2\ell-1},\bbR^{3\ell})=\\
\Big(\underset{3}{\oplus}\pi_0Emb(S^{2\ell-1},\bbR^{3\ell})\Big)\oplus \Big(\underset{3}{\oplus} \pi_{0}^{br}Emb(\underset{2}{\sqcup}S^{2\ell-1},\bbR^{3\ell})\Big)\oplus \pi_{0}^{br}Emb(\underset{3}{\sqcup}S^{2\ell-1},\bbR^{3\ell}).
\end{multline}

Haefliger's work provides the following results for each of the summands in~\eqref{br3}. By \cite[Corollary~8.14]{HAE},
 \begin{align*}
\pi_{0} Emb(S^{2\ell-1}, \bbR^{3\ell}) = \left\{ \begin{array}{cc} 
              \bbZ & \hspace{3mm} \ell\geq 2, \ \ell \text{ even,} \\
                \bbZ_2 & \hspace{2.8mm} \ell\geq 3, \ \ell \text{ odd.} 
                              \end{array} \right.
                                           \end{align*}
In the even case,  Theorem~\ref{knotinv} gives a combinatorial formula for this invariant.
                                           
By \cite[Corollary 10.3]{HAE4} and using the homotopy groups of spheres, we obtain                   
  \begin{align*}
 \pi_{0}^{br}Emb(\underset{2}{\sqcup}S^{2\ell-1},\bbR^{3\ell})=  \left\{ \begin{array}{cc} 
  \bbZ^2 & \hspace{3mm} \ell=2, \\
         \bbZ^2\oplus torsion  & \hspace{3mm}   \ell\geq 4 \hspace{2mm}  even , \\
               torsion  & \hspace{2mm} \ell\geq 3 \hspace{2mm} odd. 
                              \end{array} \right.
                                           \end{align*}

The invariants presented in Theorem~\ref{inv2} detect the non-torsion part of $\pi_{0}^{br}Emb(\underset{2}{\sqcup}S^{2\ell-1},\bbR^{3\ell})$ along with generators, that is, only when $\ell$ is even. 

From \cite[Theorem 9.4]{HAE4},                                                 $$ \pi_{0}^{br}Emb(\underset{3}{\sqcup}S^{2\ell-1},\bbR^{3\ell})=\bbZ,$$
and the invariant in Theorem~\ref{inv1} establishes this isomorphism. 

\end{subsection}

\begin{subsection}{Formula for 3-component links}\label{inv3}

Consider a spherical $3$-component link $X\sqcup Y\sqcup Z\hookrightarrow \bbR^{3\ell}$, where each component is a $(2\ell-1)$-sphere, and let $f$ denote its projection to $\bbR^{3\ell-1}$. For each pair $i,j\in\{X, Y, Z\}$, the preimage of their intersection under $f$ consists of the double point sets $L_{ij}^{+}$ and $L_{ij}^{-}$ inside component~$i$, where $L_{ij}^{+}$ corresponds to the overcrossing (where $i$ lies above~$j$), and $L_{ij}^{-}$ corresponds to undercrossing (where $i$ lies below~$j$). Each preimage is of dimension $\ell-1$, and is oriented using the Ekholm orientation when $\ell$ is even, and according to our specified convention when $\ell$ is odd, see Subsection~\ref{orient}.

In the following lemma, for notational simplicity, we denote the images of the components under the projection $f$ in $\bbR^{3\ell-1}$ by the same letters: that is, we write $X$, $Y$, and $Z$ for $f(X)$, $f(Y)$, and $f(Z)$, respectively. 

\begin{lemma}
Let $F:X\sqcup Y\sqcup Z\hookrightarrow \bbR^{3\ell}$ be a spherical $3$-component link, and let $f=p\circ F$ be its generic projection to $\bbR^{3\ell-1}$. Then 
\begin{multline}
    E(f)\coloneqq lk_X(L_{XY},L_{XZ})= lk_Y(L_{YZ},L_{YX})= lk_Z(L_{ZX},L_{ZY})\\
    =lk_{\bbR^{3\ell-1}}(Z,X\cap Y)=lk_{\bbR^{3\ell-1}}(X,Y\cap Z)=lk_{\bbR^{3\ell-1}}(Y,Z\cap X).
\end{multline}
where each $L_{ij}$, $i,j\in\{X,Y,Z\}$, is oriented according to the convention specified in Subsection~\ref{orient} (we use the Ekholm orientation when $\ell$ is even: $L_{ij}=L_{ij}^+\sqcup L_{ij}^-$).
\end{lemma}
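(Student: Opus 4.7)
The plan is to prove a single \emph{bridge} identity
\[
lk_X(L_{XY},\,L_{XZ}) \;=\; lk_{\bbR^{3\ell-1}}(Z,\,X\cap Y),
\]
and then harvest the remaining four equalities from the cyclic symmetry of the construction under $(X,Y,Z)\mapsto(Y,Z,X)\mapsto(Z,X,Y)$, together with a short argument identifying the three ambient triple linking numbers.

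\textbf{Proof of the bridge.} Since $X\cong S^{2\ell-1}$ has $H_{\ell-1}(X)=0$ for $\ell\geq 2$, the closed oriented $(\ell-1)$-submanifold $L_{XY}\subset X$ is the boundary of a smooth oriented $\ell$-chain $\mathcal D_{XY}\subset X$, which may be placed in general position with respect to $L_{XZ}$, to the self-intersection locus of $f|_X$, and to the preimages of the other pairwise crossings. By definition of the internal linking number, $lk_X(L_{XY},L_{XZ})$ equals the signed count $\mathcal D_{XY}\cdot L_{XZ}$ taken inside $X$ (up to the sign prescribed by the convention in Section~\ref{s:intro}). Pushing forward, $f(\mathcal D_{XY})\subset \bbR^{3\ell-1}$ is an $\ell$-chain whose boundary is $f(L_{XY})=X\cap Y$, and so $lk_{\bbR^{3\ell-1}}(Z,X\cap Y)$ is the signed count $Z\cdot f(\mathcal D_{XY})$. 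Any intersection point $p\in Z\pitchfork f(\mathcal D_{XY})$ lifts uniquely to a pair $(z,x)\in Z\times\mathcal D_{XY}$ with $f(z)=f(x)=p$, and such an $x$ lies in $L_{XZ}\cap\mathcal D_{XY}$. This sets up a bijection of finite sets, and a point-by-point comparison of the oriented frames $(Or(Z),Or(f(\mathcal D_{XY})))$ at $p$ with $(Or(L_{XZ}),Or(\mathcal D_{XY}))$ at $x$, using the Ekholm / sheet-ordering convention of Subsection~\ref{orient} (which was set up so that $f\colon L_{XZ}\to X\cap Z$ is an oriented map), yields the equality of the two counts.

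\textbf{Cyclic closure.} Applying the same argument to the cyclic permutations of $(X,Y,Z)$ produces the two sister bridge identities. It then remains to check that the three ambient quantities $lk(Z,X\cap Y)$, $lk(X,Y\cap Z)$, $lk(Y,Z\cap X)$ coincide; for this I would choose oriented $2\ell$-chains $\mathcal F_X,\mathcal F_Y,\mathcal F_Z$ in $\bbR^{3\ell-1}$ bounding $X,Y,Z$ -- e.g.\ by pushing down via $p$ any $2\ell$-chains that $F(X),F(Y),F(Z)$ bound in $\bbR^{3\ell}$ -- and apply the Leibniz rule for transverse intersection of chains to the iterated intersections $\mathcal F_X\cap Y\cap Z$, $X\cap\mathcal F_Y\cap Z$, and $X\cap Y\cap\mathcal F_Z$, which all compute the same $0$-chain up to the cyclic signs dictated by the relevant dimensional parities.

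\textbf{Main obstacle.} The essential work is sign bookkeeping: the (adjusted) Ekholm orientation in the even case, the sheet-ordering convention in the odd case, the outward-normal-first boundary convention, the transverse intersection rule, and the cyclic swaps in the Leibniz computation all must conspire so that the six quantities carry exactly the same sign rather than merely agreeing up to a sign. The conventions of Subsection~\ref{orient} have been chosen precisely to make this work, and the verification reduces to a local model calculation at one transverse double point $f(x)=f(z)\in X\cap Z$, but this is where nearly all the genuine content of the proof resides.
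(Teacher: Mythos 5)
Your bridge identity $lk_X(L_{XY},L_{XZ})=lk_{\bbR^{3\ell-1}}(Z,X\cap Y)$ and its proof-by-lifting is exactly the argument the paper uses, including the reliance on $H_{\ell-1}(S^{2\ell-1})=0$ to produce the chain $D(L_{XY})$ and the observation that a generic intersection point of $Z$ with $f(D(L_{XY}))$ lifts uniquely to a point of $L_{XZ}\cap D(L_{XY})$. Where you diverge is in closing the cycle. The paper gets the equality of the three ambient quantities ``for free'' from the bridge itself: applying the bridge a second time with $Y$ and $Z$ exchanged gives $lk_X(L_{XZ},L_{XY})=lk_{\bbR^{3\ell-1}}(Y,X\cap Z)$, and combining this with the anti-commutativity $lk_X(L_{XY},L_{XZ})=(-1)^{\ell}lk_X(L_{XZ},L_{XY})$ from~\eqref{lksym} and the intersection anti-symmetry $Or(X\cap Z)=(-1)^{\ell}Or(Z\cap X)$ from~\eqref{inters_sym} immediately forces $lk_{\bbR^{3\ell-1}}(Z,X\cap Y)=lk_{\bbR^{3\ell-1}}(Y,Z\cap X)$; cyclic permutation then equates all six numbers. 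Your alternative --- introducing Seifert $2\ell$-chains $\mathcal F_X,\mathcal F_Y,\mathcal F_Z$ and comparing the triple intersections $\mathcal F_X\cap Y\cap Z$, $X\cap\mathcal F_Y\cap Z$, $X\cap Y\cap\mathcal F_Z$ --- is a correct route in principle, but it imports an auxiliary construction the paper never needs, and the phrase ``compute the same $0$-chain'' is imprecise: the three triple intersections have different supports, and each ambient linking number instead has \emph{two} equal-up-to-sign expressions (e.g.\ $lk(Z,X\cap Y)=\pm(\mathcal F_X\cap Y)\cdot Z=\pm X\cap Y\cdot\mathcal F_Z$), and it is the overlap of these pairs that yields the cyclic identity.

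You are candid that the sign bookkeeping is the actual content, and I agree --- but note that the paper does carry it out for the bridge, comparing $\bigl(Or(X\cap Z),\,\text{complement in }X\bigr)$ against $\bigl(Or(Z\cap X),\,\text{complement in }Z,\,\text{complement of }X\text{ in }\bbR^{3\ell-1}\bigr)$ and extracting the factor $(-1)^{\ell}$, which exactly cancels the $(-1)^{\ell}$ from the anti-commutativity. A complete writeup of your version would need both that check and the separate sign audit of the Leibniz identity, so the bridge-only approach is the leaner of the two.
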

\begin{proof} Without loss of generality, we prove that $$lk_X(L_{XY},L_{XZ})=(-1)^{\ell}lk_X(L_{XZ},L_{XY})=lk_{\bbR^{3\ell-1}}(Z,X\cap Y).$$ By definition, $lk_X(L_{XZ},L_{XY})$ is the algebraic intersection number $L_{XZ}\cdot D(L_{XY})$ of $L_{XZ}$ with $D(L_{XY})$, where $D(L_{XY})$ is an $\ell$-chain bounded by $L_{XY}$. On the other hand, $lk_{\bbR^{3\ell-1}}(Z,X\cap Y)$ is $Z\cdot f(D(L_{XY}))$. Since $f(D(L_{XY}))$ is in general position, it intersects $Z$ transversely, and as sets these intersections coincide with those counted in $lk_X(L_{XZ},L_{XY})$. Thus the two numbers differ, if at all, only by a sign.

For $lk_X(L_{XZ},L_{XY})$, the orientation at the intersection is determined by $Or(X\cap Z)$ followed by the complementary orientation inside $X$. At the corresponding intersection point in $Z\cdot f(D(L_{XY}))$, the orientation is given by $Or(Z\cap X)$, followed by the complementary orientation in $Z$, and finally completed by the complementary orientation of $X$ in $\mathbb{R}^{3\ell-1}$. Rearranging these factors introduces a factor of $(-1)^{\ell}$, and with our orientation conventions, this agrees with the sign convention in the definition of $lk_X(L_{XY},L_{XZ})$. Hence, the two intersection numbers coincide, proving the desired equality. The remaining equalities follow by cyclic symmetry.  
\end{proof}

\begin{thm}\thlabel{inv1}
The following formula defines a homomorphism 
\sloppy
$$\mathcal{V}:\pi_{0}Emb(\underset{3}{\sqcup}S^{2\ell-1},\bbR^{3\ell})\rightarrow \bbZ$$ that vanishes on the first two summands of the Brunnian splitting \eqref{br3}, and restricts to an isomorphism on the last summand $ \pi_{0}^{br}Emb(\underset{3}{\sqcup}S^{2\ell-1},\bbR^{3\ell})\approx \bbZ$. Given a representative $F$ of a spherical $3$-component link $X\sqcup Y\sqcup Z\hookrightarrow \bbR^{3\ell}$, with projection $f$ to $\bbR^{3\ell-1}$, the invariant $\mathcal{V}(F)$ is defined by:
\begin{multline}\label{invxyz}
\sloppy
      \mathcal{V}(F) \coloneqq \frac{1}{2}E(f)-\frac{1}{2}\Big[lk_X(L{}_{XY}^{+},L_{XZ}^{-})+lk_X(L_{XY}^{-},L_{XZ}^{+})\\+lk_Y(L_{YZ}^{+},L_{YX}^{-})+lk_Y(L_{YZ}^{-},L_{YX}^{+})+lk_Z(L_{ZX}^{+},L_{ZY}^{-})+lk_Z(L_{ZX}^{-},L_{ZY}^{+})\Big],
         \end{multline}
  where each $L_{ij}^{\pm}$, $i,j\in\{X,Y,Z\}$, is oriented according to the convention specified in Subsection~\ref{orient} (we use the Ekholm orientation when $\ell$ is even).
    
    \end{thm}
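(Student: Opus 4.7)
The plan is to establish: (a) $\mathcal{V}$ is a well-defined isotopy invariant, (b) $\mathcal{V}$ is a homomorphism with respect to the component-wise connected sum, (c) $\mathcal{V}$ vanishes on the first two summands of \eqref{br3}, and (d) $\mathcal{V}$ takes the value $\pm 1$ on a generator of the Brunnian summand $\pi_0^{br}Emb(\underset{3}{\sqcup}S^{2\ell-1},\bbR^{3\ell})\approx\bbZ$.

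For step (a), take a generic isotopy $H$ and apply the cobordism analysis of Subsection~\ref{cobord} to its projection to $\bbR^{3\ell-1}\times[0,1]$. Away from the isolated triple points of the projection, each piece of $\overline{L_{ij}^{\pm}(H)}$ provides an oriented chain cobordism from the $t=0$ configuration to the $t=1$ one. Because linking numbers are invariant under such cobordism of their defining cycles, each of the six mixed linking numbers in \eqref{invxyz}, as well as each of the six equivalent forms of $E(f)$ given in the preceding lemma, is preserved under the first two Reidemeister-type moves and the saddle/Whitney-umbrella cobordisms of Section~\ref{Section2}.

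The essential case is the triple-point move (Figure~\ref{fig:3rd}). I would localize the analysis to a small ball about the triple point and show that the changes in the six mixed linking numbers, together with the change in $E(f)$, cancel after weighting by the $\tfrac12$ and $-\tfrac12$ in \eqref{invxyz}. Concretely, each of the three simultaneous local crossing changes of the double-point preimages flips the over/undercrossing label of a specific $(\ell-1)$-ball of some $L_{ij}$; using the adjusted Ekholm orientation convention of Subsection~\ref{orient} and the transverse intersection rule \eqref{inters_sym}, one computes that the change in each affected mixed linking number is a single $\pm1$ coming from a transverse intersection created at the triple point, while the change in $E(f)$ is the signed count of that triple point. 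The compatible choice of signs forces the weighted alternating sum to vanish, yielding $\Delta\mathcal{V}=0$. The degenerate triple-point configurations, where two or three sheets belong to the same link component, are handled by the same method; in those cases several $L_{ij}^{\pm}$ are empty, reducing the identity to a smaller subcase.

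For (b), the long-link presentation \eqref{iso} identifies the group operation with concatenation of links; since the double-point sets of concatenated long links combine disjointly, all six mixed linking numbers and $E(f)$ are additive. For (c), both types of summands correspond to representatives in which at least one component is isolated in a ball disjoint from the others; this forces the corresponding $L_{ij}^{\pm}$ to be empty and $E(f)$ to vanish as well, so \eqref{invxyz} equals zero. For (d), direct evaluation on the high-dimensional Borromean link, which generates the Brunnian summand, yields $\mathcal{V}=-1$ (as announced in Section~\ref{Section3}); combined with the fact that $\pi_0^{br}Emb(\underset{3}{\sqcup}S^{2\ell-1},\bbR^{3\ell})\approx\bbZ$, this shows that $\mathcal{V}$ restricts to an isomorphism on the last summand. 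The main obstacle is the triple-point-move case of step (a), which requires a careful sign and orientation bookkeeping across all six mixed linking numbers and the term $E(f)$ simultaneously.
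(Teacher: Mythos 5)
Your proposal follows essentially the same route as the paper's proof: invariance of linking numbers under the embedded cobordism moves, reduction to the triple-point move, a local sign/orientation computation verifying $\Delta\mathcal{V}=0$, and evaluation on the Borromean link (which generates the Brunnian summand) to pin down the isomorphism. Two remarks. First, the paper explicitly invokes the $\bbZ_2\times(\bbZ_2\wr S_3)$-symmetry of \eqref{invxyz} (Remark~\ref{gpaction}) to reduce the triple-point check to a single height-ordering, say $X>Y>Z$; without that reduction you would have to repeat the local coordinate computation for every ordering and both orientations, so you should state this reduction. Second, your justification for the degenerate triple-point case (two or three sheets from the same component) is slightly off: the relevant $L_{ij}^{\pm}$ are not empty in general. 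The correct reason the formula is unchanged is that the crossing change at such a triple point only alters linking numbers involving a self-intersection set $L_{XX}$ (or $L_{YY}$, $L_{ZZ}$), and these do not appear in \eqref{invxyz} nor in $E(f)$; the remaining relevant intersection sets are undisturbed. With those two adjustments, the plan matches the paper's argument.
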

 
\begin{remark}\thlabel{gpaction}
The expression in \eqref{invxyz} is equipped with $\bbZ_{2}\times (\bbZ_2\wr S_3)$-symmetry, acting uniformly on each summand. To be precise, the reflection exchanging overcrossings and undercrossings (accounting for the first $\bbZ_2$-factor) acts trivially. The second $\bbZ_2$ arises from orientation reversal of any component, changing the sign of all the linking numbers, and thus multiplies the whole expression by $-1$. The $S_3$-action permuting the components is trivial when $\ell$ is even; and is given by the sign representation of~$S_3$ when $\ell$ is odd.
    
\end{remark}

\begin{example}\thlabel{borromean}\textbf{High-dimensional Borromean link:} Denote the coordinates in $\bbR^{3\ell}$ by $(\mathbf{x,y,z})=(x_1,\ldots,x_{\ell},y_1,\ldots,y_{\ell},z_1,\ldots,z_{\ell})$. The high-dimensional Borromean link $\mathcal{B}$ consists of three spheres defined by the following equations, see Figure~\ref{fig:borB}.

\begin{figure}[ht]
\centering
 \begin{minipage}[c]{.55\textwidth}
        \centering
 \includegraphics[width=0.7\textwidth]{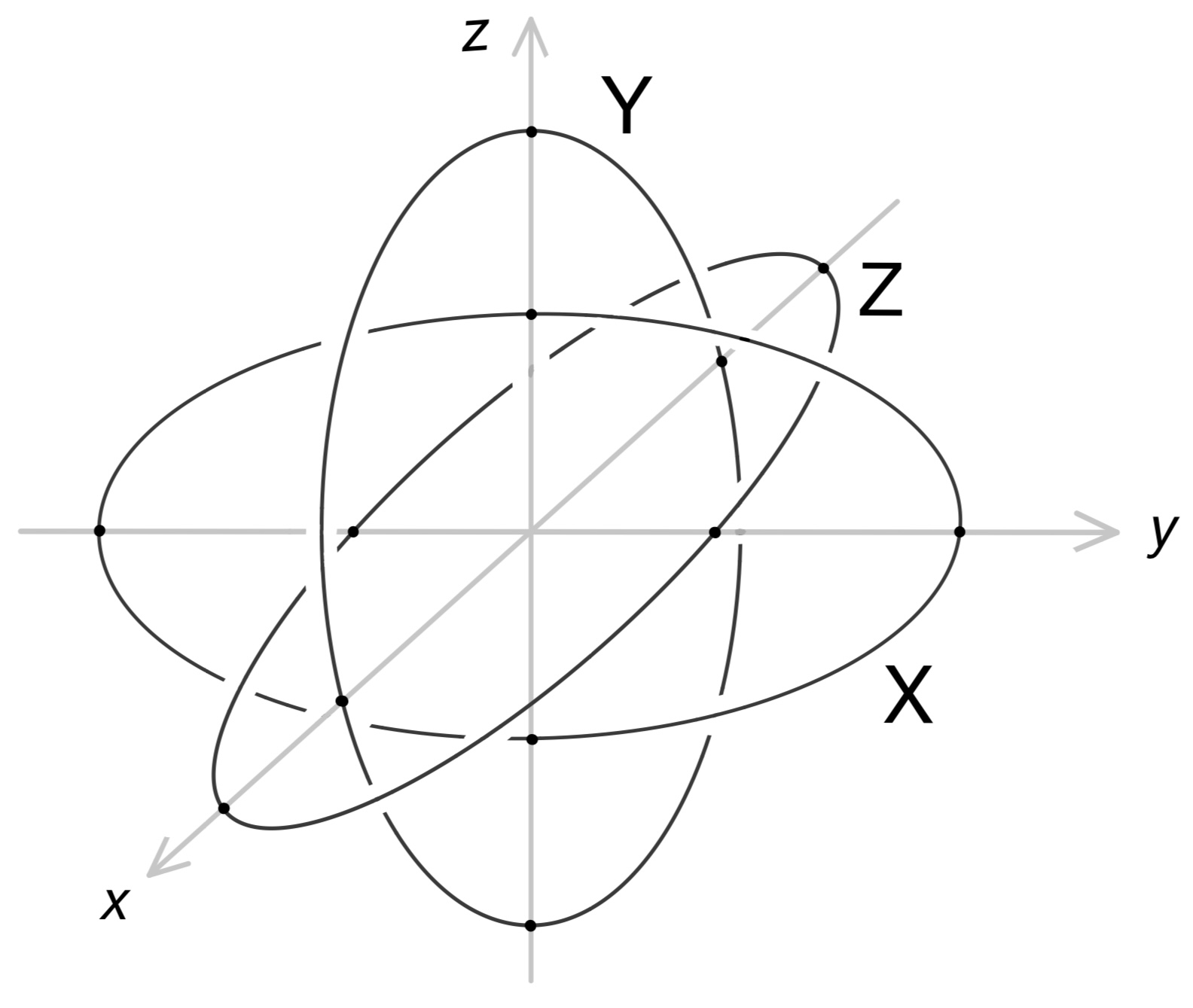}
\end{minipage} 
\begin{minipage}[c]{0.05\textwidth}
        \centering\footnotesize
\begin{align*}
X:\mathbf{x} & =0,\hspace{5mm}   \frac{|\mathbf{y}|^2}{\alpha^2}+\frac{|\mathbf{z}|^2}{\beta^2}=1; & \\
Y:\mathbf{y} & =0,\hspace{5mm}   \frac{|\mathbf{z}|^2}{\alpha^2}+\frac{|\mathbf{x}|^2}{\beta^2}=1; & \\
Z:\mathbf{z} & =0,\hspace{5mm}   \frac{|\mathbf{x}|^2}{\alpha^2}+\frac{|\mathbf{y}|^2}{\beta^2}=1  .
\end{align*}
\end{minipage}
 \caption{High dimensional Borromean link $\mathcal{B}$,  $\alpha>\beta>0$.}
   \label{fig:borB}
 \end{figure}

    Removing any component of $\mathcal{B}$ results in a trivial link, making $\mathcal{B}$ a Brunnian link. Moreover, it is known to be the generator of $\pi_{0}^{br}Emb(\underset{3}{\sqcup}S^{2\ell-1},\bbR^{3\ell})=\bbZ$ (see \cite{MAS}), it also follows from Example~\ref{brac3} and Proposition~\ref{pr:split}.
    
We claim that $\mathcal{V}(\mathcal{B})=-1$.
   For computations, we use an alternative description of $\mathcal{B}$ provided by Sakai in~\cite[\S 3]{SAK1}, which simplifies the analysis of double points in the projection to $\bbR^{3\ell-1}$. Let $\mathcal{B}=X\sqcup Y\sqcup Z\subset \bbR^{3\ell}$ where (fix $\alpha, \beta>0$ so that $2\beta<\alpha$)\begin{center}
\begin{align*}
X: & =\partial\{(\mathbf{0,y,z})\in(\bbR^{\ell})^{\times 3}\mid |\mathbf{y}| \leq \alpha, |\mathbf{z}| \leq \beta\}\simeq S^{2\ell-1}, & \\
Y: & =\partial\{(\mathbf{x,0,z})\in(\bbR^{\ell})^{\times 3}\mid |\mathbf{z}| \leq \alpha, |\mathbf{x}| \leq \beta\}\simeq S^{2\ell-1}, & \\
Z: & =\partial\{(\mathbf{x,y,0})\in(\bbR^{\ell})^{\times 3}\mid |\mathbf{x}| \leq \alpha, |\mathbf{y}| \leq \beta\}\simeq S^{2\ell-1}.
\end{align*}
\end{center}
Smoothing the corners gives smooth $(2\ell-1)$-spheres, each composed of two parts. For example: $$ X=(D^{\ell}_{\mathbf{y}}(\alpha)\times S^{\ell-1}_{\mathbf{z}}(\beta))\sqcup (S^{\ell-1}_{\mathbf{y}}(\alpha)\times D^{\ell}_{\mathbf{z}}(\beta)),$$ where $D^{\ell}_{\mathbf{y}}(\alpha)$ and $D^{\ell}_{\mathbf{z}}(\beta)$ are disks of radius $\alpha$ and $\beta$ in $(0\times\bbR_{\mathbf{y}}^{\ell}\times 0)$ and $(0\times0\times \bbR_{\mathbf{z}}^{\ell})$, respectively, and $S^{\ell-1}_{\mathbf{y}}(\alpha)=\partial D^{\ell}_{\mathbf{y}}(\alpha)$ and $S^{\ell-1}_{\mathbf{z}}(\beta)=\partial D^{\ell}_{\mathbf{z}}(\beta)$. Projecting $\mathcal{B}$ along $p:\bbR^{3\ell}\rightarrow \bbR^{3\ell-1} \times 0$ in the direction of the vector $\mathbf{n}=(1,\ldots,1)\in~\bbR^{3\ell}$ results in double intersections consisting of six disjoint $(\ell-1)$-spheres $A_1\sqcup \ldots \sqcup A_6$. 
\begin{figure}[ht]
        \centering
    \includegraphics[width=0.85\linewidth]{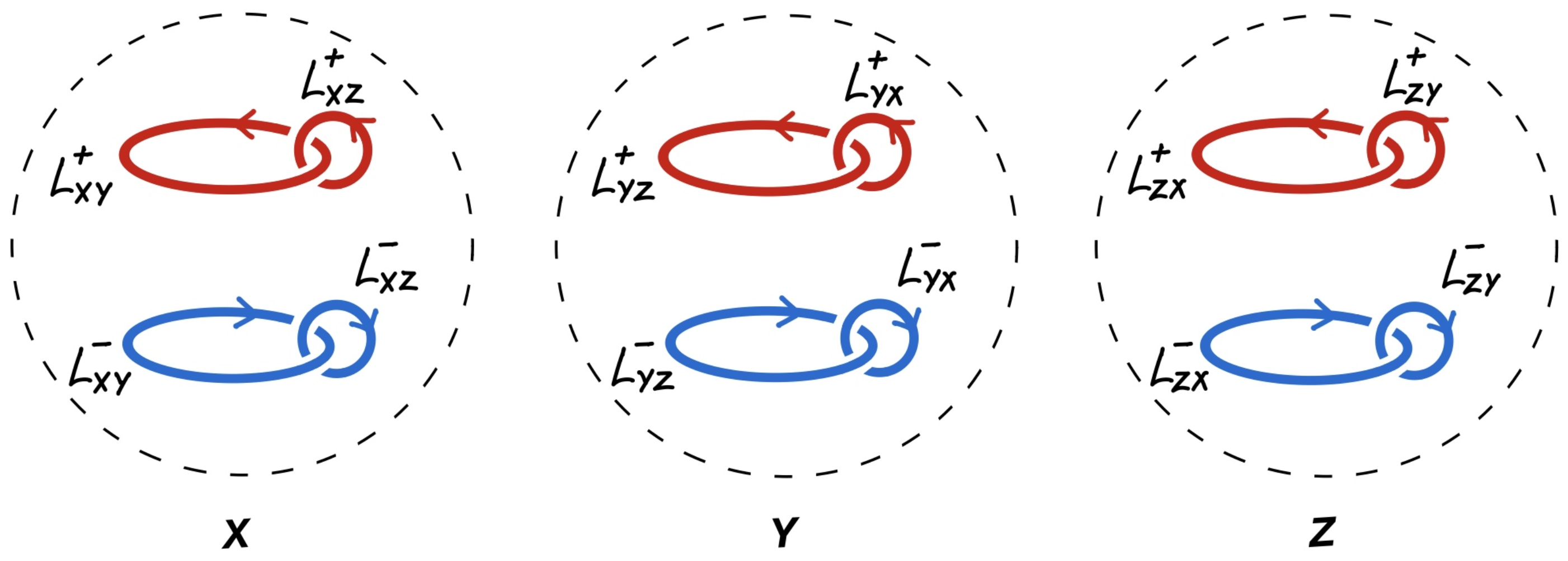}
        \caption{The set of double points of $p\circ\mathcal{B}$}
        \label{fig:6hopf}
    \end{figure}
Moreover, the set of double points contains six disjoint Hopf links, two in each component $X,Y,$ and $Z$, see Figure~\ref{fig:6hopf}. 
 For example, inside $X$\begin{align*}
L_{XY}^+ & =\{(\mathbf{0},\beta'\mathbf{n}',\mathbf{z})\in X\mid |\mathbf{z}|=\beta\},\qquad \hspace{0.9cm}L_{XY}^- =\{(\mathbf{0},-\beta'\mathbf{n}',\mathbf{z})\in X\mid |\mathbf{z}|=\beta\}, \\
L_{XZ}^+ & =\{(\mathbf{0},\mathbf{y},\beta'\mathbf{n}')\in X\mid |\mathbf{y}-\beta'\mathbf{n}'|=\beta\}, \quad L_{XZ}^- =\{(\mathbf{0},\mathbf{y},-\beta'\mathbf{n}')\in X\mid |\mathbf{y}+\beta'\mathbf{n}'|=\beta\}
 \end{align*} form two disjoint Hopf links $L_{XY}^+\sqcup \L_{XZ}^+$ and $L_{XY}^-\sqcup \L_{XZ}^-$. According to our orientation convention, each Hopf link (out of six) has  linking number~$-1$, showing that $\mathcal{V}(\mathcal{B})=\frac 12 E(p\circ\mathcal B)=-1$. For more details, see~\cite[\S 3]{SAK1}.

\end{example}

\begin{example}\thlabel{brac3}\textbf{A link corresponding to configuration spaces:} 
There is a graphing map $G_{*}:\pi_{2\ell-1}^{br}C(3,\bbR^{\ell+1})\rightarrow \pi_0^{br}Emb_{\partial}(\underset{3}{\sqcup}\bbR^{2\ell-1},\bbR^{3\ell})$, see Appendix~\ref{appen}. From \cite{HIL}, it is known that $\pi_{2\ell-1}^{br}C(3,\bbR^{\ell+1})$ is isomorphic to $\bbZ$, and the generator is given by the Whitehead bracket $[\alpha_{12},\alpha_{23}]\in \pi_{2\ell-1}C(3,\bbR^{\ell+1})$, where $\alpha_{12},\alpha_{23}\in\pi_{\ell}C(3,\bbR^{\ell+1})$ are classes corresponding to braids in which strand $2$ loops around strand $1$ or strand $3$, respectively. Proposition~\ref{pr:split} implies that $G_*$ is an isomorphism. We claim that under $G_*$, the generator $[\alpha_{12},\alpha_{23}]$ gets mapped to $-\mathcal{B}$.  
 The set of double points of $G_*([\alpha_{12},\alpha_{23}])$ when projected to $\bbR^{3\ell-1}$ along some projection $p$ is depicted in the following picture, see~\cite[\S~3.7.1]{NG1} for more details:

\begin{figure}[ht]
        \centering
    \includegraphics[width=0.85\linewidth]{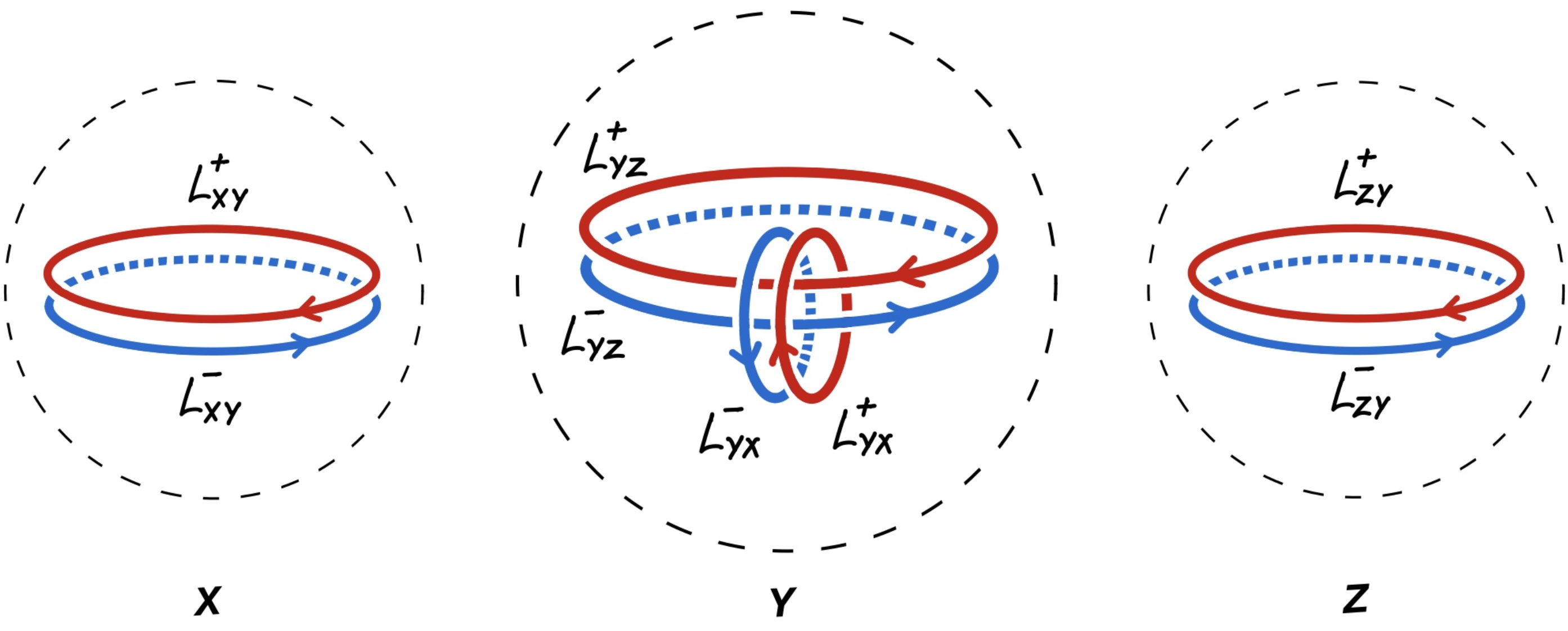}
        \caption{The set of double points of $p\circ G_*([\alpha_{12},\alpha_{23}])$}
        \label{fig:G}
    \end{figure}
 



From this figure we can see that $E(p\circ G_*[\alpha_{12},\alpha_{23}])=lk_X(L_{XY},L_{XZ})=0$, while $lk_Y(L_{YZ}^+,L_{YX}^-)=lk_Y(L_{YZ}^-,L_{YX}^+)=-1$, implying $\mathcal V(G_*[\alpha_{12},\alpha_{23}])=1=-\mathcal V(\mathcal B)$. 
 
\end{example}

\begin{proof}[Proof of Theorem \ref{inv1}]
    The well-definedness of the formula follows from its invariance under isotopy. Since embedded cobordism moves do not affect linking numbers, the expression is preserved under such moves. However, the higher-dimensional third Reidemeister move can change linking numbers, where three sheets intersect at a triple point. If two
    or three sheets belong to the same component (e.g. two from $X$ and one from $Y$), the invariant $\mathcal{V}(F)$ is unchanged. However, if the three sheets come from distinct components, then moving one of them changes the linking number between this component and the intersection of the other two by $\pm 1$, so $E(f)$ changes by $\pm 1$. At the same time, the over/undercrossing data at the triple point change by exactly one crossing (see Figure~\ref{fig:3rd}), contributing an additional $\pm 1$. We claim that these 
    two contributions cancel each other, leaving $\mathcal{V}(F)$ invariant.
    
    To verify full invariance, we must examine how $\mathcal{V}(F)$ behaves under changes in the height ordering of the components $X, Y, Z$ at a triple point, as well as under orientation reversal. Since the formula is symmetric by Remark~\ref{gpaction}, it suffices to show invariance in a single case: a triple point where component $X$ lies above $Y$, and $Y$ lies above $Z$. The following is a local model for this situation considered as a projection to $\bbR^{3\ell-1}$. 
    
Let $\{F_t\}$ be a $1$-parameter family of embeddings $X\sqcup Y\sqcup Z\hookrightarrow \bbR^{3\ell}$ for $t\in (-\epsilon, \epsilon)$, such that their projection along the last coordinate yields a generic family $\{f_t$\} of immersions $X\sqcup Y\sqcup Z\looparrowright \bbR^{3\ell-1}$. 
Suppose that $X$, $Y$, and $Z$ lie at constant heights $a$, $b$, and $c$, respectively, with $a>b>c$. We assume that at $t=0$, the image $f_0$ has a triple point at the origin in $ \bbR^{3\ell-1}$.
We choose following positively oriented local coordinates: 
\begin{align*}
\text{ On } X:& \quad (x,y_1,\ldots,y_{\ell-1},z_1,\ldots z_{\ell-1}), \\
\text{ On } Y:& \quad(y,z_1,\ldots,z_{\ell-1},x_1,\ldots x_{\ell-1}), \\
\text{ On } Z:& \quad(z,x_1,\ldots,x_{\ell-1},y_1,\ldots, y_{\ell-1}), \\
\text{ On } \bbR^{3\ell-1}:& \quad(x,y,x_1,\ldots,x_{\ell-1},y_1,\ldots,y_{\ell-1},z_1,\ldots,z_{\ell-1}).
 \end{align*}
 
Near the triple point in $ \bbR^{3\ell-1}$, we model $f_t$ as:
\begin{align*}
f_t|_{X} & =(x,0,\underbrace{0,\ldots,0}_{\ell-1},y_1,\ldots,y_{\ell-1},z_1,\ldots,z_{\ell-1}),\\
f_t|_{Y} & =(t,y,x_1,\ldots,x_{\ell-1},\underbrace{0,\ldots,0}_{\ell-1},z_1,\ldots,z_{\ell-1}),\\
f_t|_{Z} & =(z,z,x_1,\ldots,x_{\ell-1},y_1,\ldots, y_{\ell-1},\underbrace{0,\ldots,0}_{\ell-1}).
\end{align*}

Note that only $Y$ is moved by this isotopy. The $(\ell-1)$-dimensional double intersections of $f_t$ are locally given by:
\begin{align*}
f_t|_{X}\cap f_t|_{Y} & =(t,0,\underbrace{0,\ldots,0}_{\ell-1},\underbrace{0,\ldots,0}_{\ell-1},s_1,\ldots, s_{\ell-1}),\\
f_t|_{Z}\cap f_t|_{X} & =(0,0,\underbrace{0,\ldots,0}_{\ell-1},\tilde{s}_1,\ldots,\tilde{s}_{\ell-1},\underbrace{0,\ldots,0}_{\ell-1}),\\
f_t|_{Y}\cap f_t|_{Z} & =(t,t,\doubletilde{s}_1,\ldots,\doubletilde{s}_{\ell-1},\underbrace{0,\ldots,0}_{\ell-1},\underbrace{0,\ldots,0}_{\ell-1}).
\end{align*}

According to the orientation convention in Subsection~\ref{orient}, if
$(e_1,e_2,\ldots,e_{3\ell-1})$ denotes the basis for $\bbR^{3\ell-1}$ in the new coordinates, then the intersections are oriented so that
$$(\underbrace{-e_{2\ell+1}, e_{2\ell+2},\ldots,e_{3\ell-1}}_{Or(f_t|_{Y}\cap f_t|_{X})}, \underbrace{(-1)^{\ell}e_2,e_3,\ldots,e_{\ell+1}}_{\text{rest of }Or(Y)},\underbrace{-e_1,e_{\ell+2},\ldots, e_{2\ell}}_{\text{rest of } Or(X)}),$$ 
$$(\underbrace{(-1)^{\ell}e_{\ell+2},\ldots,e_{2\ell}}_{Or(f_t|_{Z}\cap f_t|_{X})}, \underbrace{(-1)^{\ell}(e_1+e_2),e_3,\ldots,e_{\ell+1}}_{\text{rest of } Or(Z)},\underbrace{-e_1,e_{2\ell+1},\ldots,e_{3\ell-1}}_{\text{rest of } Or(X)}),$$
and
$$(\underbrace{(-1)^{\ell}e_3,e_4,\ldots,e_{\ell+1}}_{Or(f_t|_{Y}\cap f_t|_{Z}))},\underbrace{(-1)^{\ell}e_2,e_{2\ell+1},\ldots,e_{3\ell-1}}_{\text{rest of } Or(Y)}, \underbrace{-(e_1+e_2),e_{\ell+2},\ldots,e_{2\ell}}_{\text{rest of } Or(Z)}),$$ 
give the positive orientation on $\bbR^{3\ell-1}$.
 
To compute the change in $E(f)$, we analyze how the linking number $lk_{\bbR^{3\ell-1}}(Y,Z\cap X)=lk(f_t|_{Y}, f_t|_{Z}\cap f_t|_{X})$ changes as $t$ crosses $0\in \bbR^{3\ell-1}$. We claim that this linking number jumps by
 $(-1)^{\ell}$ when we pass a triple point. Let $f_t|_{Z}\cap f_t|_{X}$ bound a singular $\ell$-chain $D(ZX)$ in $\bbR^{3\ell-1}$, given near the intersection by $$\{(r,0,\underbrace{0,\ldots,0}_{\ell-1},\tilde{s}_1,\ldots,\tilde{s}_{\ell-1},\underbrace{0,\ldots,0}_{\ell-1})|r\leq 0\}.$$ For $t=-\delta$, where $\delta$  is a small positive number, this chain intersects $f_t|_{Y}$ transversely at a single point 
 $(-\delta,\underbrace{0,\ldots,0}_{3\ell-2})$. The orientation at the intersection is given by: $$\big(\underbrace{e_2,e_{2\ell+1},\ldots,e_{3\ell-1},e_3,\ldots,e_{\ell+1}}_{Or(f_t|_{Y})}, \underbrace{e_1, (-1)^{\ell}e_{\ell+2},e_{\ell+3},\ldots,e_{2\ell}}_{Or(D(ZX))}\big),$$ which contributes $(-1)^{\ell-1}$ to the linking number. For $t=\delta$, there is no intersection, and hence the jump is $\Delta E(f)=\Delta lk_{\bbR^{3\ell-1}}(Y,Z\cap X)=0-(-1)^{\ell-1}=(-1)^{\ell}$.

For $t=-\delta$, the preimages of the double intersection of $f_t$ in each component are given by the following three local pictures:
 \begin{center}
\begin{tikzpicture}[scale=0.8]

\begin{knot}[
    clip width=8, consider self intersections=true,
  ignore endpoint intersections=false, flip crossing=3
    ]
 \strand [blue, very thick, only when rendering/.style={
    postaction=decorate,
  },
  decoration={
    markings,
    mark=at position 0.9 with {\arrow{To}}
  }] (1.4, 1)--(-1.4, -0.2);

         \strand [red, very thick, only when rendering/.style={
    postaction=decorate,
  },
  decoration={
    markings,
    mark=at position 0.9 with {\arrow{To}}
  }] (0, 1.4)--(0, -0.7);
        \node[left] at (0,1.7) {$L_{YZ}^{+}$};
        \node[right] at (1.5,1.1) {$L_{YX }^{-}$};
         \node at (0,-1.2) {$\mathbf{Y}$};
        
\end{knot}    

\begin{scope}[shift={(-4.8,0)}];
\begin{knot}[
    clip width=8, consider self intersections=true,
  ignore endpoint intersections=false, flip crossing=3
    ]
 \strand [red, very thick, only when rendering/.style={
    postaction=decorate,
  },
  decoration={
    markings,
    mark=at position 0.9 with {\arrow{To}}
  }] (1.4, 1)--(-1.4, -0.2);

         \strand [red, very thick, only when rendering/.style={
    postaction=decorate,
  },
  decoration={
    markings,
    mark=at position 0.9 with {\arrow{To}}
  }] (0, 1.4)--(0, -0.7);
        \node[left] at (0,1.7) {$L_{XY}^{+}$};
        \node[right] at (1.5,1.1) {$L_{XZ }^{+}$};
               \node at (0,-1.2) {$\mathbf{X}$};
        
\end{knot}  
\end{scope}      

\begin{scope}[shift={(4.8,0)}];
\begin{knot}[
    clip width=8, consider self intersections=true,
  ignore endpoint intersections=false, flip crossing=3
    ]
 \strand [blue, very thick, only when rendering/.style={
    postaction=decorate,
  },
  decoration={
    markings,
    mark=at position 0.9 with {\arrow{To}}
  }] (1.4, 1)--(-1.4, -0.2);

         \strand [blue, very thick, only when rendering/.style={
    postaction=decorate,
  },
  decoration={
    markings,
    mark=at position 0.9 with {\arrow{To}}
  }] (0, 1.4)--(0, -0.7);
        \node[left] at (0,1.7) {$L_{ZY}^{-}$};
        \node[right] at (1.5,1.1) {$L_{ZX }^{-}$};
               \node at (0,-1.2) {$\mathbf{Z}$};

\end{knot}  
\end{scope}
\end{tikzpicture}

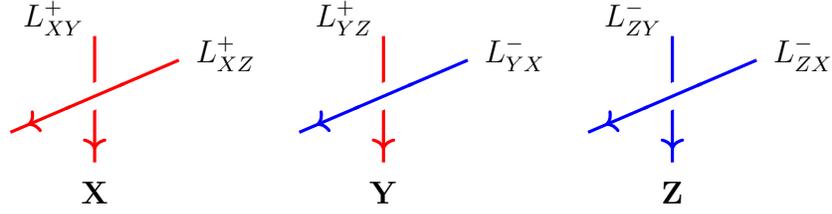
\captionof{figure}{The set of double points near the triple point, for $t<0$}
\end{center}        

Note that now we only care about the contribution of $lk(L_{YZ}^{+},L_{YX}^{-})$ in $Y$, where 
\begin{align*}
\hspace{0.3cm}  L^{-}_{YX} & =(0,-s_1,s_2,\ldots, s_{\ell-1},\underbrace{0,\ldots,0}_{\ell-1}), \\
  L^{+}_{YZ}(t) & = (t,\underbrace{0,\ldots,0}_{\ell-1},(-1)^{\ell}\doubletilde{s}_1,\doubletilde{s}_2,\ldots,\doubletilde{s}_{\ell-1}),
  \end{align*}
are the preimages in $Y$, oriented according to our convention in Subsection~\ref{orient}.
 Assume $L_{YX}^{-}$ is a boundary of an $\ell$-chain \sloppy$D(L_{YX}^{-})=\{(r,-s_1,s_2,\ldots, s_{\ell-1},0,\ldots,0)|r\leq 0\}$ that intersect $L_{YZ}^{+}$ exactly once when $t=-\delta$, see Figure~\ref{fig:Ychange}. At the intersection point, the orientation
\begin{footnotesize}
$$\Big(Or(L_{YZ}^{+}), \frac{\partial}{\partial y}, Or(L_{YX}^{-})\Big)=\Big((-1)^{\ell}\frac{\partial}{\partial x_1},\frac{\partial}{\partial x_2},\ldots,\frac{\partial}{\partial x_{\ell-1}}, \frac{\partial}{\partial y}, -\frac{\partial}{\partial z_1},\frac{\partial}{\partial z_2},\ldots,\frac{\partial}{\partial z_{\ell-1}}\Big),$$
\end{footnotesize}yields a contribution of $(-1)^{\ell-1}$ to $lk(L_{YZ}^{+},L_{YX}^{-})$. Since the intersection vanishes at $t=\delta$, we have $\Delta lk(L_{YZ}^{+},L_{YX}^{-})=(-1)^{\ell}$.

 \begin{center}
\begin{tikzpicture}

\begin{knot}[
    clip width=9, consider self intersections=true,
  ignore endpoint intersections=false, flip crossing=3
    ]
 \strand [blue, very thick, only when rendering/.style={
    postaction=decorate,
  },
  decoration={
    markings,
    mark=at position 0.9 with {\arrow{To}}
  }] (1.45, 0.9)--(-1.45, -0.3);
                         \strand [blue, dashed, dash pattern=on 1pt off 2pt, very thick] (0.4, 0.9)--(-1.5, 0.1);

         \strand [red, very thick, only when rendering/.style={
    postaction=decorate,
  },
  decoration={
    markings,
    mark=at position 0.9 with {\arrow{To}}
  }] (0, 1.4)--(0, -0.7);
                                  \strand [blue, dashed, dash pattern=on 1pt off 2pt, very thick] (0.4, 0.9)--(1.4, 0.9);
                                                   \strand [blue, dashed, dash pattern=on 1pt off 2pt, very thick] (-1.5,0.1)--(-0.7, 0.1);
        \node[left] at (0,1.7) {$L_{YZ}^{+}$};
        \node[right] at (1.5,1.1) {$L_{YX }^{-}$};
              \node[left] at (0.5,-1.5) {$t<0$};
\end{knot}        

\begin{scope}[shift={(6,0)}];
\begin{knot}[
    clip width=8, consider self intersections=true,
  ignore endpoint intersections=false, flip crossing=3
    ]
         \strand [red, very thick, only when rendering/.style={
    postaction=decorate,
  },
  decoration={
    markings,
    mark=at position 0.9 with {\arrow{To}}
  }] (0, 1.4)--(0, -0.7);
 \strand [blue, very thick, only when rendering/.style={
    postaction=decorate,
  },
  decoration={
    markings,
    mark=at position 0.9 with {\arrow{To}}
  }] (1.4, 1)--(-1.4, -0.2);
    
                 \strand [blue, dashed, dash pattern=on 1pt off 2pt, very thick] (0.4, 1)--(-1.5, 0.2);
                                  \strand [blue, dashed, dash pattern=on 1pt off 2pt, very thick] (0.4, 1)--(1.4, 1);
                                                   \strand [blue, dashed, dash pattern=on 1pt off 2pt, very thick] (-1.5,0.2)--(-0.7, 0.2);
        \node[left] at (0,1.7) {$L_{YZ}^{+}$};
        \node[right] at (1.5,1.1) {$L_{YX }^{-}$};
                      \node[left] at (0.5,-1.5) {$t>0$};

\end{knot}     
\end{scope}   
\end{tikzpicture}

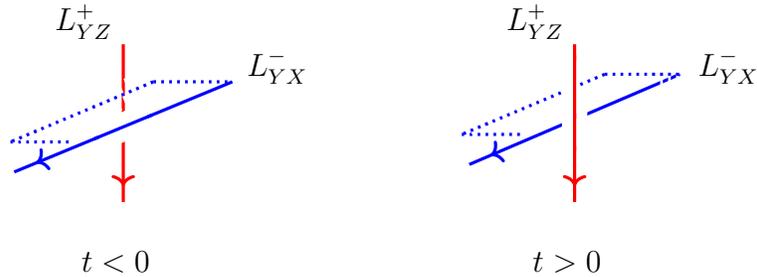
\captionof{figure}{$D(L_{YX}^-)$ intersected with $L_{YZ}^+$}
\label{fig:Ychange}
\end{center} 

Thus, the overall change in the formula is:
\begin{align*}
\Delta\mathcal{V}(F) & = \mathcal{V}(F_{\delta})-\mathcal{V}(F_{-\delta})\\
&=\Delta E(f)-\Delta lk(L_{YZ}^{+},L_{YX}^{-})\\
& =(-1)^{\ell}-(-1)^{\ell}\\
&=0.
 \end{align*}
Using Examples~\ref{borromean} and \ref{brac3}, we conclude that $\mathcal{V}:\pi_0^{br}Emb(\underset{3}{\sqcup}S^{2\ell-1},\bbR^{3\ell})\rightarrow \bbZ$ is an isomorphism. Indeed, it follows from the facts that $\mathcal{B}$ is a generator of $\pi_0^{br}Emb(\underset{3}{\sqcup}S^{2\ell-1},\bbR^{3\ell})$ and $\mathcal{V}(\mathcal{B})=-1$.
  
\end{proof}

 \end{subsection}

\begin{subsection}{The invariants for 2-component links}\label{2complinks}

We now set $\ell=2k$. Consider a $2$-component link $X\sqcup Y\hookrightarrow \bbR^{6k}$, where each component is a $(4k-1)$-sphere. Under projection to $\bbR^{6k-1}$, the set of double points is $(2k-1)$-dimensional, oriented according to the adjusted Ekholm orientation (see Subsection~\ref{orient}). 

 \begin{thm}\thlabel{inv2}
The following formulas give a homomorphism 
$$\mathcal{W}=(\mathcal{W}_1,\mathcal{W}_2):\pi_{0}Emb(\underset{2}{\sqcup}S^{4k-1},\bbR^{6k})\rightarrow \bbQ^2,$$ which vanishes on the first summand of the Brunnian splitting \eqref{brsplit2}, and is rationally an isomorphism on the second summand. Given a representative $F$ of a spherical $2$-component link $X\sqcup Y\hookrightarrow \bbR^{6k}$, we assign two invariants:
\begin{align}\label{eq:W1}
\mathcal{W}_1(F) & \coloneqq\frac{1}{2} lk_X(L_{XX}, L_{XY})-lk_Y(L_{YX}^{+},L_{YX}^{-});\\ \label{eq:W2} 
\mathcal{W}_2(F) & \coloneqq\frac{1}{2} lk_Y(L_{YY}, L_{YX})-lk_X(L_{XY}^{+},L_{XY}^{-}),
\end{align}
where each $L_{ij}$ is taken with the adjusted Ekholm orientation, that is, $L_{ij}=L_{ij}^+\sqcup -L_{ij}^-$ for $i,j=X,Y$.
Moreover, the image of homomorphism $\mathcal{W}$ is
 \begin{align*}
 \left\{ \begin{array}{cc} 
  \{(a,b)\in\bbZ^2|a+b\in2\bbZ\} & \hspace{2.8mm} k=1,2,4\\
              2\bbZ\times 2\bbZ & \hspace{3mm} \text{otherwise.} 
          \end{array} \right.
    \end{align*}

\end{thm}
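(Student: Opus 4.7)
The plan is to prove Theorem~\ref{inv2} following the same pattern as Theorem~\ref{inv1}: first establish isotopy invariance of the two formulas by analyzing their behavior under the moves discussed in Section~\ref{Section2}, then evaluate $\mathcal{W}=(\mathcal{W}_1,\mathcal{W}_2)$ on explicit generators to identify the image.

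For isotopy invariance, the embedded cobordism moves of Section~\ref{cobord} (higher-dimensional analogs of Reidemeister~I and II, saddle cobordisms with Whitney umbrella points, and simultaneous double saddles) preserve linking numbers of closed submanifolds, hence preserve both $\mathcal{W}_1$ and $\mathcal{W}_2$. The substantive check is invariance under a triple point move: for a $2$-component link the three sheets at a triple point fall into one of four essentially distinct configurations: (a) all three from $X$, (b) all three from $Y$, (c) two from $X$ and one from $Y$, (d) two from $Y$ and one from $X$. In each case I model the move locally as in the proof of Theorem~\ref{inv1}, tracking the adjusted Ekholm orientations on the preimages $L_{ij}^{\pm}$ together with the over/under data at the triple point, and verify that the resulting $\pm 1$ jumps in the linking numbers of \eqref{eq:W1}--\eqref{eq:W2} cancel in pairs within each $\mathcal{W}_i$. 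For instance, configuration~(c) changes $\tfrac12 lk_X(L_{XX},L_{XY})$ and $lk_Y(L_{YX}^+,L_{YX}^-)$ by the same signed amount, so $\mathcal{W}_1$ is preserved; the remaining cases are symmetric. Vanishing on the first summand of \eqref{brsplit2} is immediate: its representatives have $X$ and $Y$ in disjoint $6k$-balls, so $L_{XY}=L_{YX}=\emptyset$ and both $\mathcal{W}_i(F)=0$.

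To analyze the Brunnian part, I use the description recalled after \eqref{br3}: for $\ell=2k$ the non-torsion of $\pi_0^{br}Emb(\underset{2}{\sqcup}S^{4k-1},\bbR^{6k})$ is $\bbZ^2$, with rational generators corresponding to the two tripod graphs with leg labels $(1,1,2)$ and $(1,2,2)$, each realizable via the graphing map $G_*$ from Appendix~\ref{appen} applied to appropriate Whitehead brackets in the configuration space. I compute $\mathcal{W}$ on each generator by describing the double-point preimages of a generic projection and identifying the Hopf-linked pairs of $(2k-1)$-spheres, as in Example~\ref{brac3}; since $\mathcal{W}_1$ is weighted toward $X$-self-crossings and $\mathcal{W}_2$ toward $Y$-self-crossings, the resulting $2\times 2$ matrix over $\bbQ$ is invertible, showing $\mathcal{W}\otimes\bbQ$ is an isomorphism on the Brunnian part. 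For the integral image, I first argue $\mathcal{W}_i(F)\in\bbZ$: the sheet-swap involution exchanging $L_{XX}^+$ with $L_{XX}^-$ pairs contributions to $lk_X(L_{XX},L_{XY})$ and forces this quantity to be even. For the precise lattice, I use Haefliger's identification of the non-torsion of $\pi_0^{br}Emb(\underset{2}{\sqcup}S^{4k-1},\bbR^{6k})$ with $\pi_{4k-1}(S^{2k})^{\oplus 2}$ modulo torsion from \cite[Corollary~10.3]{HAE4}: a Hopf-invariant-one element in $\pi_{4k-1}(S^{2k})$ exists precisely for $k\in\{1,2,4\}$ by Adams' theorem, yielding a link with $\mathcal{W}(F)=(1,1)$ in those cases and giving the index-two sublattice $\{(a,b)\in\bbZ^2 : a+b\in 2\bbZ\}$; for other $k$, only Whitehead squares of Hopf invariant $2$ are available, forcing each $\mathcal{W}_i(F)$ to be individually even and giving $2\bbZ\times 2\bbZ$.

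The main obstacle is the isotopy invariance check: the triple point cancellations require delicate sign bookkeeping because each $\mathcal{W}_i$ pairs an intra-component self-linking with an inter-component linking, and the adjusted Ekholm orientation convention affects several terms at once. A secondary obstacle is the integrality and lattice analysis above, which requires reconciling our orientation conventions with Haefliger's classification and invoking Adams' Hopf-invariant-one theorem to pin down the $k\in\{1,2,4\}$ versus general $k$ dichotomy.
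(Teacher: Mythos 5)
Your high-level plan (triple-point invariance, then evaluation on generators) matches the paper's, but the image analysis has two genuine gaps.

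First, you propose to realize \emph{both} rational Brunnian generators via the $2$-strand graphing map $G_*$. That cannot work: $\pi_{4k-1}C(2,\bbR^{2k+1})=\pi_{4k-1}S^{2k}$ has rational rank one, so $G_*$ hits only a rank-one subspace of the rank-two Brunnian group --- the paper shows $G_*([id,id])=-\mathcal B_1-\mathcal B_2$ up to torsion (Example~\ref{brac2}), a single combination. The two independent generators $\mathcal B_1,\mathcal B_2$ that the paper evaluates $\mathcal W$ on are obtained instead by tubing two components of the $3$-component Borromean link (Example~\ref{generator}); your argument needs such a construction, not graphing.

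Second, the lattice determination is where the real technical work lies, and your sketch skips it. Knowing which elements of $\pi_{4k-1}S^{2k}$ exist (Adams' theorem) does not by itself tell you the image of $\mathcal W$: you must exclude the possibility that some element of $\pi_0^{br}Emb(\underset{2}{\sqcup}S^{4k-1},\bbR^{6k})$ \emph{outside} the image of $G_*$ still maps to $(1,0)$ under the tubing homomorphism $T=(T_1,T_2)$ to $\pi_0Emb(S^{4k-1},\bbR^{6k})^{\oplus 2}=\bbZ^2$. The paper closes this hole using the homotopy retraction of Theorem~\ref{retract}: if $T(J)=(-1,0)$, then $J=\tfrac12(-\mathcal B_1-\mathcal B_2)$, which the retraction $r$ would send to half the generator of the $\bbZ$-summand of $\pi_{4k-1}S^{2k}$ --- impossible when $k\ne 1,2,4$. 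Your proposal never invokes the retraction. Relatedly, your sheet-swap involution argument for integrality does not work as stated: the swap $L_{XX}^+\leftrightarrow L_{XX}^-$ is orientation-\emph{reversing} with respect to the adjusted Ekholm orientation and is defined only on the double-point locus, not as a map of $X$ that could carry a bounding chain for $L_{XY}$, so it does not pair up intersection points. Evenness of $lk_X(L_{XX},L_{XY})$ when the projection is an immersion does hold, but via the $2{:}1$-covering argument of Lemma~\ref{other}; in general the paper obtains the integral image not from a parity argument but from the factorization $\mathcal W=A\circ T$ through the integer lattice.
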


\begin{example}\thlabel{generator}
There are two linearly independent elements in $\pi_{0}^{br}Emb(\underset{2}{\sqcup}S^{4k-1},\bbR^{6k})$ constructed using the Borromean link $\mathcal{B}=X\sqcup Y\sqcup Z$. Specifically, we relabel the components of $\mathcal B$ as $X_1\sqcup X_2\sqcup Y$ and $X\sqcup Y_1\sqcup Y_2$, repectively, and then form new $2$-component links
$$\mathcal{B}_1=(X_1\# X_2)\sqcup Y \text{  and  } \mathcal{B}_2=X\sqcup (Y_1\# Y_2),$$ where $\#$ denotes the connected sum along a thin tube. Since the complement of $\mathcal{B}$ is simply connected, the isotopy class of the resulting link is independent of the chosen path for tubing. Moreover, it is easy to compute that $\mathcal{W}(\mathcal{B}_1)=(-2,0)$ and $\mathcal{W}(\mathcal{B}_2)=(0,-2)$. 
Consider, for example $\mathcal B_1$.  
Its set of double points of the projection 
is given in Figure~\ref{fig:B_1}. 
Each of the six Hopf links have linking number $-1$,
 see Example~\ref{borromean}, which implies 
 $\frac{1}{2} lk_X(L_{XX}, L_{XY})=-2$, while
 $lk_Y(L_{YX}^{+},L_{YX}^{-})=\frac{1}{2} lk_Y(L_{YY}, L_{YX})=lk_X(L_{XY}^{+},L_{XY}^{-})=0$.
 
\begin{figure}[ht]
\centering
 \includegraphics[width=0.92\textwidth]{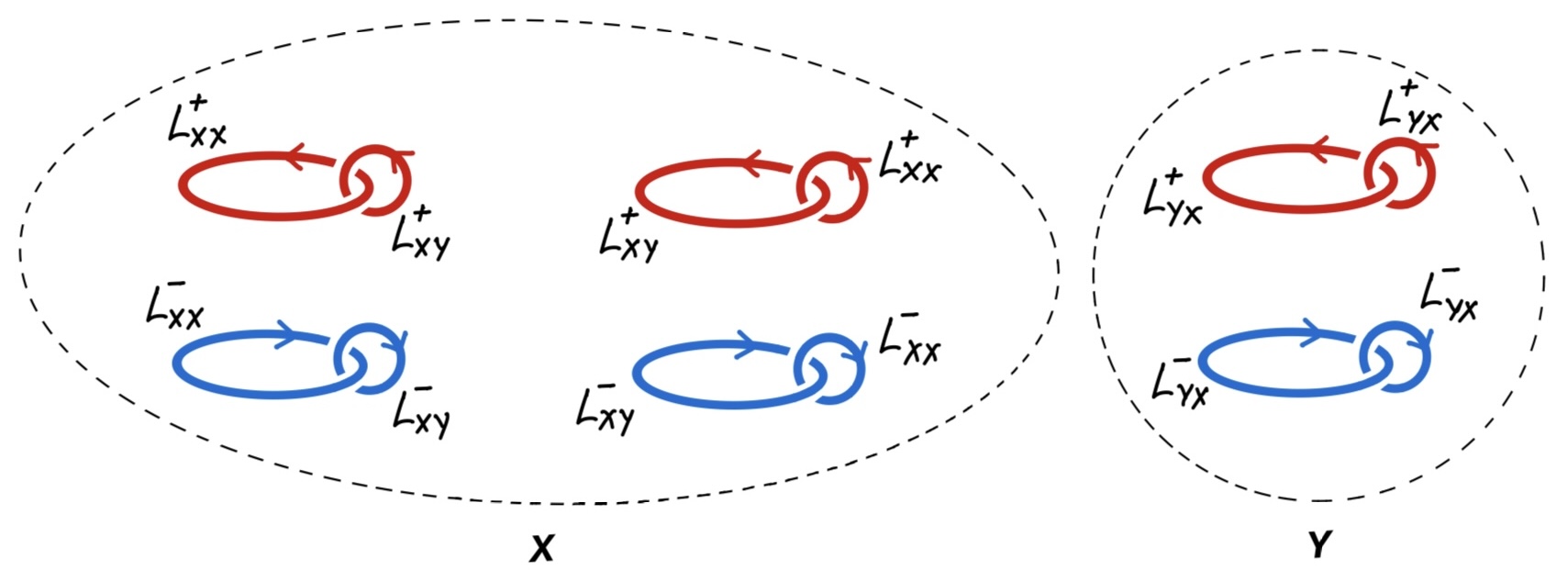}
 \caption{The set of double points of $p\circ \mathcal{B}_1$}
 \label{fig:B_1}
 \end{figure}


\end{example}

\begin{example}\thlabel{brac2}
Consider $C(2,\bbR^{2k+1})\simeq S^{2k}$, and the element $[\alpha_{12},\alpha_{12}]\in\pi_{4k-1}C(2,\bbR^{2k+1})$ corresponding to the (rational) generator $[id, id]$ of the $\bbZ$-summand of $\pi_{4k-1}S^{2k}$, where $[id,id]$ is the Whitehead bracket of the identity map of $S^{2k}$ with itself. we claim that the graphing map (see Appendix~\ref{appen}) \sloppy $$G_{*}:\pi_{4k-1}S^{2k}\rightarrow \pi_0Emb_{\partial}(\underset{2}{\sqcup}\bbR^{4k-1},\bbR^{6k})$$
sends $[id,id]$ to $-\mathcal{B}_1-\mathcal{B}_2$, up to a torsion element. (This statement  also appeared in \cite{RK}.) Under a projection $p$ to $\bbR^{6k-1}$, the set of double points of $G_*([id,id])$ is depicted in Figure~\ref{fig:brac2}, see
\cite[Section~3.7.2]{NG1} for details. It implies $\mathcal W (G_*([\alpha_{12},\alpha_{12}))=(2,2)$ since  $\frac{1}{2} lk_X(L_{XX}, L_{XY})=
\frac{1}{2} lk_Y(L_{YY}, L_{YX})=0$ and
$lk_Y(L_{YX}^{+},L_{YX}^{-})=lk_X(L_{XY}^{+},L_{XY}^{-})=2$.
\begin{figure}[h]
\centering
 \includegraphics[width=0.58\textwidth]{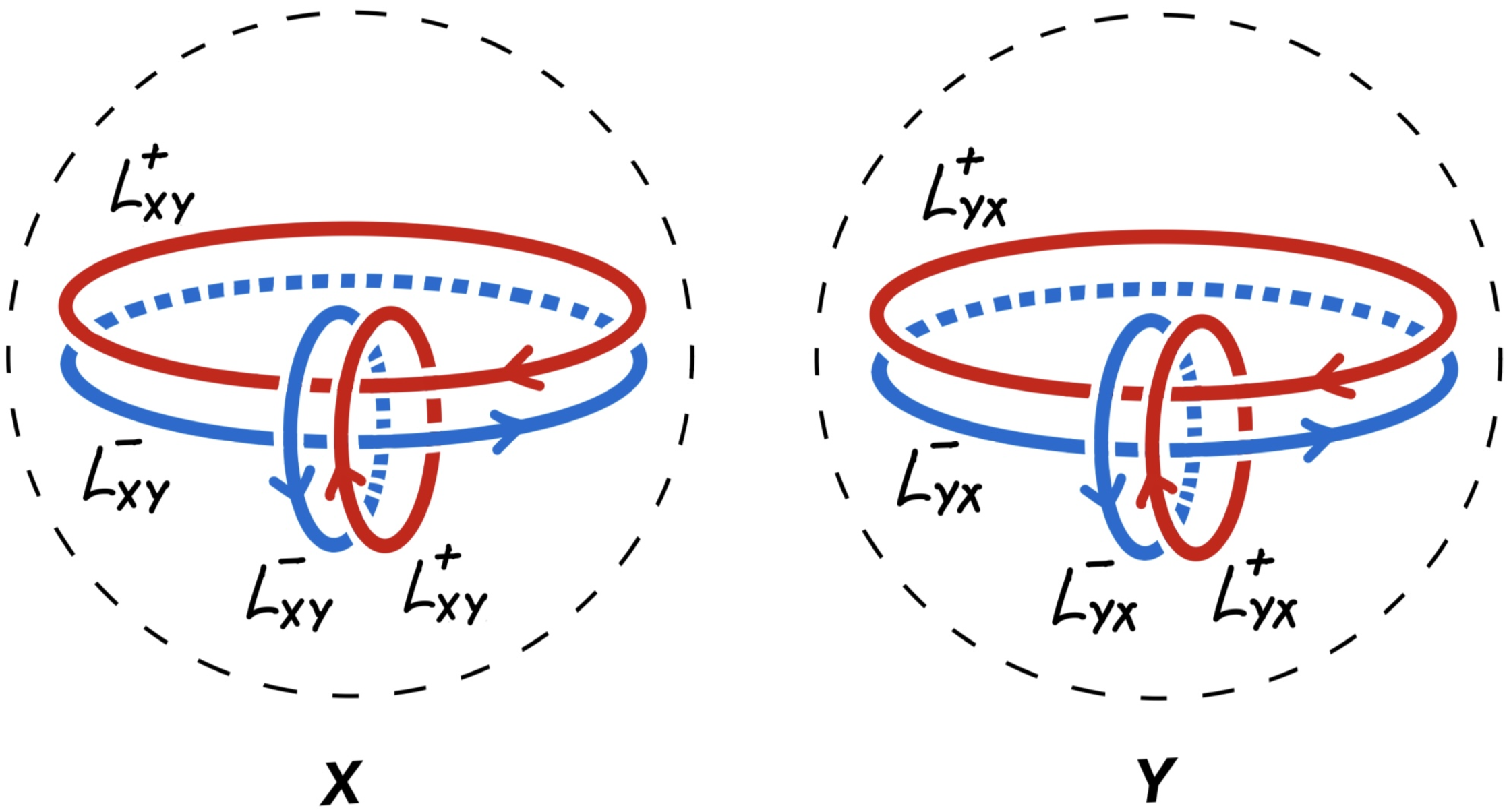}
 \caption{The set of double points of $p\circ G_*([id,id])$}
 \label{fig:brac2}
 \end{figure}

For $k=1,2,4$, the generator of the $\bbZ$ summand of $\pi_{4k-1}S^{2k}$ corresponds to the Hopf fibration $Hopf:S^{4k-1}\rightarrow S^{2k}$, which under $G_{*}$ gets mapped to $\frac{1}{2}(-\mathcal{B}_1-\mathcal{B}_2)$, up to a torsion element. The set of double points of $G_*([Hopf])$ under a projection is shown in Figure~\ref{fig:hopf}, see \cite[Remark~3.7.1]{NG1}.
\begin{figure}[h]
\centering
 \includegraphics[width=0.55\textwidth]{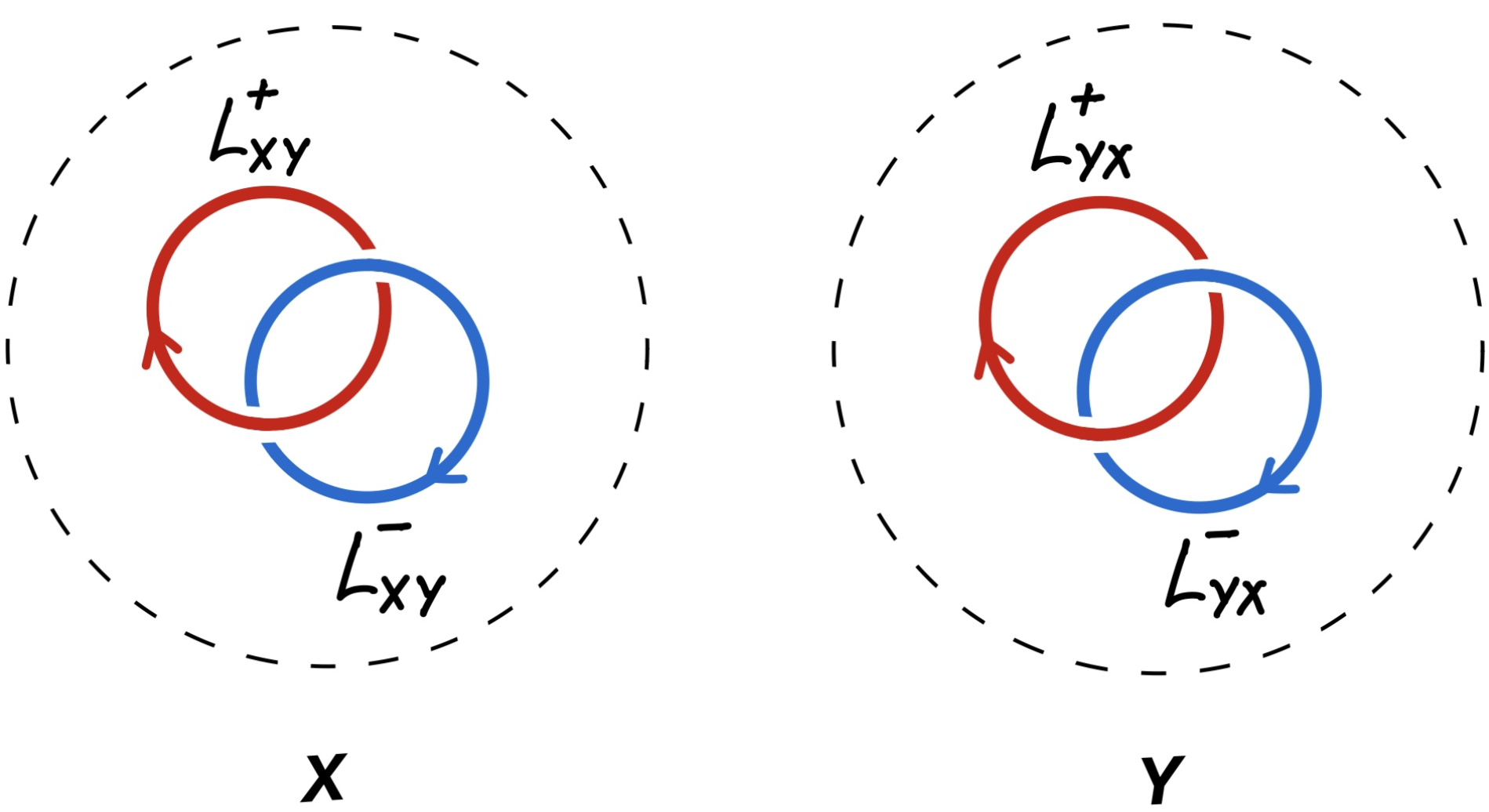}
 \caption{The set of double points of $p\circ G_*([Hopf])$}
 \label{fig:hopf}
 \end{figure}
    
\end{example}

\begin{proof}[Proof of Theorem \ref{inv2}]
The formulas for $\mathcal{W}_1$ and $\mathcal{W}_2$ are expressed in terms of linking numbers that involve self-intersections of the components $X$ and $Y$, possibly including Whitney umbrella points. These linking numbers are unaffected by cobordism moves. The only potential variation occurs under the third Reidemeister move. Since $\mathcal{W}_1$ and $\mathcal{W}_2$ are structurally similar, it suffices to verify the invariance of $\mathcal{W}_1$. The only relevant case is when two sheets of $X$, say $X_1$ and $X_2$, and one sheet of $Y$ meet in a triple point. There are three subcases depending on whether $Y$ lies between, above or below the two sheets of $X$. Note that the second summand of  $\mathcal{W}_1$ changes only 
in the first situation i.e., when $Y$ is in between.

 We first check the case when at the triple point, $X_1$ lies above $X_2$, and $X_2$ lies above $Y$ ($X_1>X_2>Y$ for notation).  Since $\mathcal{W}_1$ is unchanged under mirror symmetry, it is also invariant when $Y>X_1>X_2$.  
  The following is a local model for this situation considered as a projection to $\bbR^{6k-1}$.
Consider a generic $1$-parameter family $\{f_t$\} of immersions, for $t\in (-\epsilon, \epsilon)$, of some link $X\sqcup Y\hookrightarrow \bbR^{6k}$ projected to $\bbR^{6k-1}$ along the last coordinate. Suppose a triple point occurs at $t=0$, located at the origin in $\bbR^{6k-1}$. Since $X_1>X_2>Y$, we may assume that their last coordinates are constant values $a>b>c$, respectively. One can find the following local coordinates 
$$(x_1,x^2_1,\ldots,x^2_{2k-1},y_1,\ldots y_{2k-1})$$
$$(x_2,y_1,\ldots,y_{2k-1},x_1^1,\ldots x^1_{2k-1})$$
$$(y,x^1_1,\ldots,x^1_{2k-1},x^2_1,\ldots, x^2_{2k-1})$$
$$(x_1,x_2,x^1_1,\ldots,x^1_{2k-1},x^2_1,\ldots,x^2_{2k-1},y_1,\ldots,y_{2k-1})$$
on $X_1, X_2, Y,$ and $\bbR^{6k-1}$, respectively, where all coordinate systems are assumed to be positively oriented. Near the triple point $0\in \bbR^{6k-1}$, we have
\begin{align*}
f_t|_{X_1} & =(x_1,0,\underbrace{0,\ldots,0}_{2k-1},x^2_1,\ldots,x^2_{2k-1},y_1,\ldots,y_{2k-1}),\\
f_t|_{X_2} & =(t,x_2,x^1_1,\ldots,x^1_{2k-1},\underbrace{0,\ldots,0}_{2k-1},y_1,\ldots,y_{2k-1}),\\
f_t|_{Y} & =(y,y,x^1_1,\ldots,x^1_{2k-1},x^2_1,\ldots, x^2_{2k-1},\underbrace{0,\ldots,0}_{2k-1}).
\end{align*}
The double intersections of $f_t$ are given by $(2k-1)$-dimensional oriented manifolds as follows:

\begin{align*}
f_t|_{X_1}\cap f_t|_{X_2} & =(t,0,\underbrace{0,\ldots,0}_{2k-1},\underbrace{0,\ldots,0}_{2k-1},-s_1,\ldots, s_{2k-1}),\\
f_t|_{X_1}\cap f_t|_{Y} & =(0,0,\underbrace{0,\ldots,0}_{2k-1},\tilde{s}_1,\ldots,\tilde{s}_{2k-1},\underbrace{0,\ldots,0}_{2k-1}),\\
f_t|_{X_2}\cap f_t|_{Y} & =(t,t,\doubletilde{s}_1,\ldots,\doubletilde{s}_{2k-1},\underbrace{0,\ldots,0}_{2k-1},\underbrace{0,\ldots,0}_{2k-1}).
\end{align*}
Note that the only linking numbers that contribute to the change of $\mathcal{W}_1$ are $lk(L_{X_1X_2}^{+},L_{X_1Y}^{+})$ and $lk(L_{X_2X_1}^{-},L_{X_2Y}^{+})$ in $X_1$ and $X_2$, respectively.  
The respective preimages of the double intersection of $f_t$ are as follows: 
\begin{align*}
 \text{In } X_1:\hspace{0.3cm} L^{+}_{X_1X_2}(t) & =(t,\underbrace{0,\ldots,0}_{2k-1},-s_1,s_2,\ldots, s_{2k-1}), \\
 L^{+}_{X_1Y} & = (0,\tilde{s}_1,\ldots,\tilde{s}_{2k-1},\underbrace{0,\ldots,0}_{2k-1}),\\
  \text{In } X_2:\hspace{0.3cm}  L^{-}_{X_2X_1} & =(0,-s_1,s_2,\ldots, s_{2k-1},\underbrace{0,\ldots,0}_{2k-1}), \\
  L^{+}_{X_2Y}(t) & = (t,\underbrace{0,\ldots,0}_{2k-1},\doubletilde{s}_1,\doubletilde{s}_2,\ldots,\doubletilde{s}_{2k-1}).
  \end{align*}

For $t=-\delta$, where $\delta$ is a small positive number, we get two local pictures:
 \begin{center}
\begin{tikzpicture}

\begin{knot}[
    clip width=8, consider self intersections=true,
  ignore endpoint intersections=false, flip crossing=3
    ]
 \strand [blue, very thick, only when rendering/.style={
    postaction=decorate,
  },
  decoration={
    markings,
    mark=at position 0.9 with {\arrow{To}}
  }] (1.4, 1)--(-1.4, -0.2);

         \strand [red, very thick, only when rendering/.style={
    postaction=decorate,
  },
  decoration={
    markings,
    mark=at position 0.9 with {\arrow{To}}
  }] (0, 1.4)--(0, -0.7);
        \node[left] at (0,1.7) {$L_{X_2Y}^{+}$};
        \node[right] at (1.5,1.1) {$L_{X_2X_1 }^{-}$};
         \node at (0,-1.2) {$\mathbf{X_2}$};
        
\end{knot}    

\begin{scope}[shift={(-4.8,0)}];
\begin{knot}[
    clip width=8, consider self intersections=true,
  ignore endpoint intersections=false, flip crossing=3
    ]
 \strand [red, very thick, only when rendering/.style={
    postaction=decorate,
  },
  decoration={
    markings,
    mark=at position 0.9 with {\arrow{To}}
  }] (1.4, 1)--(-1.4, -0.2);

         \strand [red, very thick, only when rendering/.style={
    postaction=decorate,
  },
  decoration={
    markings,
    mark=at position 0.9 with {\arrow{To}}
  }] (0, 1.4)--(0, -0.7);
        \node[left] at (0,1.7) {$L_{X_1X_2}^{+}$};
        \node[right] at (1.5,1.1) {$L_{X_1Y }^{+}$};
               \node at (0,-1.2) {$\mathbf{X_1}$};
        
\end{knot}  
\end{scope}      

\end{tikzpicture}
\captionof{figure}{The inverse image of the self-intersection of $f_t$ in $X$, $t<0$}
\end{center}        
 
We calculate the change in linking numbers in $X_1$ and $X_2$ going from $f_{-\delta}$ and $f_{\delta}$. 

Assume $L_{X_1Y}^{+}$ bounds a singular $2k$-chain 
\sloppy $D(L_{X_1Y}^{+})=\{(r,\tilde{s}_1,\ldots,\tilde{s}_{2k-1},0,\ldots,0)|r\leq 0\}$. At $t=-\delta$, this chain intersects $L_{X_1X_2}^{+}$ exactly at a single point. The orientation at this intersection point is given by
\begin{footnotesize}$$\Big(\underbrace{-\frac{\partial}{\partial y_1},\frac{\partial}{\partial y_2},\ldots,\frac{\partial}{\partial y_{2k-1}}}_{Or(L_{X_1X_2}^+)}, \frac{\partial}{\partial x_1}, \underbrace{\frac{\partial}{\partial x^2_1},\ldots, \frac{\partial}{\partial x^2_{2k-1}}}_{Or(L_{X_1Y}^+)}\Big),$$\end{footnotesize}which contributes $-1$ to $lk(L_{X_1X_2}^{+},L_{X_1Y}^{+})$. As this intersection vanishes for $t=\delta$, the change is $\Delta lk(L_{X_1X_2}^{+},L_{X_1Y}^{+})=0-(-1)=+1$.

Similarly, suppose $L_{X_2Y}^{+}$ bounds a singular $2k$-chain $D(L_{X_2Y}^{+})=\{(r+t,0,\ldots,0,\doubletilde{s}_1,\doubletilde{s}_2,\ldots,\doubletilde{s}_{2k-1})|r\geq 0\}$. At $t=-\delta$, this intersects $L_{X_2X_1}^{-}$ at one point with orientation
\begin{footnotesize}
$$\Big(Or(L_{X_2X_1}^{-}),- \frac{\partial}{\partial x_2}, Or(L_{X_2Y}^{+})\Big)=\Big(-\frac{\partial}{\partial y_1},\frac{\partial}{\partial y_2},\ldots,\frac{\partial}{\partial y_{2k-1}},-\frac{\partial}{\partial x_2}, \frac{\partial}{\partial x^1_1},\ldots,\frac{\partial}{\partial x^1_{2k-1}} \Big),$$
\end{footnotesize}contributing $-1$ to $lk(L_{X_2X_1}^{-},L_{X_2Y}^{+})$. This intersection also disappears at $t=\delta$, so $\Delta lk(L_{X_2X_1}^{-},L_{X_2Y}^{+})=+1$.

Hence, the net change in $\mathcal{W}_1(F)$ while passing the triple point is:
\begin{align*}
\Delta\mathcal{W}_1(F) &= \mathcal{W}_1(F_{\delta})-\mathcal{W}_1(F_{-\delta})\\
&=\frac{1}{2}\Big(\Delta lk(L_{X_1X_2}^{+},L_{X_1Y}^{+}) -\Delta lk(L_{X_2X_1}^{-},L_{X_2Y}^{+})\Big)\\
&=\frac{1}{2}(1-1)\\
&=0
 \end{align*}

Now suppose $X_1>Y>X_2$, the immersions $f_t|_{X_1},f_t|_{X_2},f_t|_{Y}$ and all intersections (with orientations) are the same as before. However, instead of $L_{X_2Y}^{+}$ and $L_{YX_2}^{-}$, we now have $L_{X_2Y}^{-}$ and $L_{YX_2}^{+}$ in $X_2$ and $Y$, respectively:
\begin{align*}
 L^{-}_{X_2Y}(t) & = (t,\underbrace{0,\ldots,0}_{2k-1},\doubletilde{s}_1,\doubletilde{s}_2,\ldots,\doubletilde{s}_{2k-1}),\\
 L_{YX_2}^{+}(t) & = (t,\doubletilde{s}_1,\doubletilde{s}_2,\ldots,\doubletilde{s}_{2k-1},\underbrace{0,\ldots,0}_{2k-1}).
\end{align*}

The linking numbers that contribute to $\mathcal{W}_1$ are $lk(L_{X_1X_2}^{+},L_{X_1Y}^{+})$, $lk(L_{X_2X_1}^{-},L_{X_2Y}^{-})$, and $lk(L_{YX_2}^{+},L_{YX_1}^{-})$.
By replacing $L_{X_2Y}^{+}$ by $L_{X_2Y}^{-}$ in the previous argument, we get $\Delta lk(L_{X_1X_2}^{+},L_{X_1Y}^{+})=+1$ and $\Delta lk(L_{X_2X_1}^{-},L_{X_2Y}^{-})=+1$. The change in $lk(L_{YX_2}^{+},L_{YX_1}^{-})$ is computed by taking a $2k$-chain $D(L_{YX_1}^{-})=\{(r,0,\ldots,0,\tilde{s}_1,\ldots,\tilde{s}_{2k-1})|r\leq 0\}$ around $L_{YX_1}^{-}$. This chain intersects $L_{YX_2}^{+}$ exactly at one point when $t=-\delta$, and there is no intersection when $t=\delta$. At the intersection point, we have 
\begin{footnotesize}$$\Big(\underbrace{\frac{\partial}{\partial x^1_1},\ldots,\frac{\partial}{\partial x^1_{2k-1}}}_{Or(L_{YX_2}^+)}, \frac{\partial}{\partial y}, \underbrace{\frac{\partial}{\partial x^2_1},\ldots, \frac{\partial}{\partial x^2_{2k-1}}}_{Or(L_{YX_1}^-)}\Big)$$\end{footnotesize}which contributes $-1$ to $lk(L_{YX_2}^{+},L_{YX_1}^{-})$, and thus we get $\Delta lk(L_{YX_2}^{+},L_{YX_1}^{-})=+1$.

Hence, the overall change in $\mathcal{W}_1$ is:
\begin{align*}
\Delta\mathcal{W}_1(F) &= \mathcal{W}_1(F_{\delta})-\mathcal{W}_1(F_{-\delta})\\
&=\frac{1}{2}\Big(\Delta lk(L_{X_1X_2}^{+},L_{X_1Y}^{+}) +\Delta lk(L_{X_2X_1}^{-},L_{X_2Y}^{-})\Big)-\Delta lk(L_{YX_2}^{+},L_{YX_1}^{-}) \\
&=\frac{1}{2}(1+1)-1\\
&=0.
 \end{align*}

To determine the image of the map $\mathcal{W}:\pi_{0}^{br}Emb(\underset{2}{\sqcup}S^{4k-1},\bbR^{6k})\xrightarrow{(\mathcal{W}_1,\mathcal{W}_2)} \bbQ\oplus \bbQ$, we first recall that $\mathcal{W}(\mathcal{B}_1)=(-2,0)$ and $\mathcal{W}(\mathcal{B}_2)=(0,-2)$ (see Example~\ref{generator}).
Now consider the tubing map $$T:\pi_{0}^{br}Emb(\underset{2}{\sqcup}S^{4k-1},\bbR^{6k})\xrightarrow{(T_1,T_2)} \bbZ\oplus \bbZ,$$ where $T_1$ (resp. $T_2$) maps a pair $X\sqcup Y$ to $X\#Y$ (resp. $X\#(-Y)$) in $\pi_{0}Emb(S^{4k-1},\bbR^{6k})=\bbZ$.  Both maps send $\mathcal{B}_1$ and $\mathcal{B}_2$, up to sign, to the generator Haefliger trefoil knot. In particular, we have $$T(\mathcal{B}_1)=(1,-1) \text{ and }T(\mathcal{B}_2)=(1,1),$$ due to the symmetry of $\pi_{0}^{br}Emb(\underset{3}{\sqcup}S^{4k-1},\bbR^{6k})$, see Remark~\ref{gpaction}, which gives $$X\sqcup Y\sqcup Z=X\sqcup (-Y)\sqcup (-Z)=-(X\sqcup Y\sqcup (-Z)).$$  These maps fit in a commutative diagram, which in terms of the rational generators is:

\begin{center}
\begin{tikzcd}[column sep=small, row sep=small] 
    & \mathcal{B}_1,\mathcal{B}_2  \arrow{ddl}[swap]{T} \arrow{ddr}{\mathcal{W}} &\\
    & \circlearrowleft &\\[-2ex]
    (1,-1),(1,1) \arrow{rr}[swap]{A} & &  (-2,0),(0,-2) \\
\end{tikzcd}
\end{center}

Here, the linear map $A$ is given by left multiplication by the matrix \begin{align*}\begin{bmatrix} -1 & 1\\ -1 & -1 \end{bmatrix}. \end{align*} 
The elements $(1,-1)$ and $(1,1)$ generate the sublattice $L=\{(a,b)\in \bbZ^2| a+b\in 2\bbZ\}$, which has index $2$ in $\bbZ^2$. Thus, the image of $T$ is $\bbZ^2$ or $L$, depending on whether the element $(1,0)$ lies in the image. This can be seen using Example~\ref{brac2}, which provides a map $G_*:\pi_{4k-1}S^{2k} \rightarrow \pi_{0}^{br}Emb(\underset{2}{\sqcup}S^{4k-1},\bbR^{6k})$ sending the (rational) generator $[id,id]$ to $-\mathcal{B}_1-\mathcal{B}_2$, up to torsion. Moreover, for $k=1,2,4$, the $\bbZ$ summand of the group $\pi_{4k-1}S^{2k}$ is generated by $[Hopf]=\frac{1}{2}[id,id]$, which maps to $\frac{1}{2}(-\mathcal{B}_1-\mathcal{B}_2)$ under $G_*$. Theorem~\ref{retract} ensures the existence of a retraction $r:\pi_{0}^{br}Emb(\underset{2}{\sqcup}S^{4k-1},\bbR^{6k})\rightarrow \pi_{4k-1}S^{2k}$ to the inclusion~$G_*$.
 Thus, we have the following diagram:
\[
\begin{tikzcd}
\pi_{4k-1}S^{2k} \arrow[r,"G_*"] 
    &\pi_{0}^{br}Emb(\underset{2}{\sqcup}S^{4k-1},\bbR^{6k}) \arrow[r,"\mathcal{W}"] \arrow[d,"T"'] & \bbQ^2\\
   & \bbZ^2 \arrow[ur,"A"'] 
\end{tikzcd}
\]
Note that $\mathcal{W}(-\mathcal{B}_1-\mathcal{B}_2)=(2,2)$ and $T(-\mathcal{B}_1-\mathcal{B}_2)=(-2,0)$. Now suppose $k\neq 1,2,4$, and assume that $(-1,0)$ is in the image of $T$. Then there exists $J\in\pi_{0}^{br}Emb(\underset{2}{\sqcup}S^{4k-1},\bbR^{6k})$ such that $T(J)=(-1,0)$. Since $T$ is a homomorphism, we must have $J=\frac{1}{2}(-\mathcal{B}_1-\mathcal{B}_2)$, which maps under $r$ to $\frac{1}{2}[id, id]$. In case $k\neq 1,2,4$, this contradicts to the fact that $[id, id]$ is a generator of the $\bbZ$ summand of $\pi_{4k-1}S^{2k}$. Hence, for $k\neq 1,2,4$, we conclude that $T(\pi_{0}^{br}Emb(\underset{2}{\sqcup}S^{4k-1},\bbR^{6k}))=L$, which implies $\mathcal{W}(\pi_{0}^{br}Emb(\underset{2}{\sqcup}S^{4k-1},\bbR^{6k}))=A(L)=2\bbZ\times2\bbZ$. For $k=1,2,4$, it follows that $T(\pi_{0}^{br}Emb(\underset{2}{\sqcup}S^{4k-1},\bbR^{6k}))=\bbZ^2$, and thus $\mathcal{W}(\pi_{0}^{br}Emb(\underset{2}{\sqcup}S^{4k-1},\bbR^{6k}))=A(\bbZ^2)=L$.

\end{proof}

In the special case where there are no Whitney umbrella points,
the formulas \eqref{eq:W1} and~\eqref{eq:W2} for the invariant $\mathcal W$ can be made to look similar to that of $\mathcal V$ in \eqref{invxyz}, see Corollary~\ref{inv2_imm}.

\begin{lemma}\thlabel{other}
   For a $2$-component link $X\sqcup Y\hookrightarrow \bbR^{6k}$, let the projection $f$ of $X$ (respectively, $f$ of $Y$) to $\bbR^{6k-1}$ be an immersion. Then, the following equality holds$$E_X(f)\coloneq \frac 12 lk_X(L_{XX},L_{XY})=lk_{\bbR^{6k-1}}(X\cap X, Y)$$ 
    $$\Big(\text{respectively, }E_Y(f)\coloneq \frac 12 lk_Y(L_{YY},L_{YX})=lk_{\bbR^{6k-1}}(Y\cap Y, X)\Big),$$
    
    where the sets of double points $L_{ij}$ for $i,j=X,Y$ are oriented using the Ekholm orientation $L_{ij}=L_{ij}^+\sqcup L_{ij}^-$, see Subsection~\ref{orient}.\footnote{To compare, note that in Theorem~\ref{inv2} we use the adjusted Ekholm orientation 
    $L_{ij}=L_{ij}^+\sqcup -L_{ij}^-$.}
\end{lemma}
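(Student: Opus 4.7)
The plan is to prove the first equality (the second follows by swapping $X$ and $Y$) by choosing compatible bounding chains on the two sides and then establishing a sign-preserving $2$-to-$1$ correspondence between the intersections used to compute them. First, since $Y \subset \bbR^{6k}$ bounds a $4k$-disk $D_Y$, I would set $\widetilde Y := f(D_Y) \subset \bbR^{6k-1}$ (after a generic perturbation), a $4k$-chain with $\partial \widetilde Y = Y$, so that $lk_{\bbR^{6k-1}}(X \cap X, Y) = (X \cap X)\cdot \widetilde Y$. Correspondingly, I would take $D(L_{XY}) := X \cap f^{-1}(\widetilde Y) \subset X$ as the bounding chain for $L_{XY}$; since $f|_X$ is an immersion, this is generically a $2k$-chain in $X$, and $\partial D(L_{XY}) = X \cap f^{-1}(Y) = L_{XY}$ with matching orientation, so that $lk_X(L_{XX}, L_{XY}) = L_{XX}\cdot D(L_{XY})$.

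Next I would exploit the involution $\tau$ on $L_{XX}$ swapping $L_{XX}^+$ and $L_{XX}^-$. By construction, $D(L_{XY})$ is $\tau$-invariant: if $x \in L_{XX} \cap D(L_{XY})$ then $f(\tau(x)) = f(x) \in \widetilde Y$, forcing $\tau(x) \in D(L_{XY})$ as well. Consequently the intersection points of $L_{XX} \cap D(L_{XY})$ in $X$ come in $\tau$-pairs $\{x, \tau(x)\}$ with $x \in L_{XX}^+$ and $\tau(x) \in L_{XX}^-$, and each pair projects under $f$ to a single point of $(X \cap X) \cap \widetilde Y$; conversely, every $q \in (X \cap X) \cap \widetilde Y$ lifts to exactly one point in $L_{XX}^+$ and one in $L_{XX}^-$, both lying in $D(L_{XY})$. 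This yields a canonical $2$-to-$1$ bijection of underlying sets.

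The main technical step, which I expect to be the main obstacle, is verifying that each point of the pair $\{x, \tau(x)\}$ contributes the same sign to $L_{XX} \cdot D(L_{XY})$ and that this common sign equals the sign of $f(x) \in (X \cap X) \cap \widetilde Y$ computed in $\bbR^{6k-1}$. The adjusted Ekholm convention is engineered precisely for this: both $f|_{L_{XX}^+}$ and $f|_{L_{XX}^-}$ are orientation-preserving onto $X \cap X$, and $f|_X$ is an orientation-preserving local diffeomorphism at each of $x$ and $\tau(x)$. I would push the relevant tangent data forward via $f_*$, using the identification $T_x D(L_{XY}) \cong T_{f(x)}(\widetilde Y \cap f(X)_i)$, where $f(X)_i$ is the sheet containing $x$, and the orientation on the right comes from the transverse intersection rule. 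Writing $T_{f(x)}\bbR^{6k-1} = T(X\cap X) \oplus \langle \vec v_1\rangle \oplus \langle \vec v_2 \rangle$ via the Ekholm decomposition, and applying the transverse intersection rule to $\widetilde Y \cap f(X)_i$, a bookkeeping computation (whose coherence is most transparently checked on a local model for $k=1$) shows that the signs of $(X \cap X) \cdot (\widetilde Y \cap f(X)_i)$ in $f(X)_i$ for $i = 1,2$ both agree with the sign of $(X \cap X) \cdot \widetilde Y$ in $\bbR^{6k-1}$. Summing the pairwise contributions gives $L_{XX} \cdot D(L_{XY}) = 2\,(X \cap X) \cdot \widetilde Y$, hence $\tfrac{1}{2}\, lk_X(L_{XX}, L_{XY}) = lk_{\bbR^{6k-1}}(X \cap X, Y)$.
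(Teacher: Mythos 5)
Your proposal is correct and follows essentially the same strategy as the paper: pick a bounding chain $D_Y$ (equivalently your $\widetilde Y$) for $Y$ in $\bbR^{6k-1}$, pull it back to a bounding chain for $L_{XY}$ in $X$, exploit that $f|_{L_{XX}}\colon L_{XX}\to X\cap X$ is a $2{:}1$ covering, and check via the Ekholm decomposition into sheets that each of the two preimage intersection points carries the same sign as the corresponding image intersection point. One small slip: you attribute the key orientation behavior to the \emph{adjusted} Ekholm convention, but (as the lemma's footnote emphasizes) $L_{XX}$ here carries the \emph{ordinary} Ekholm orientation; your actual description --- that $f$ restricted to each of $L_{XX}^\pm$ is orientation-preserving onto $X\cap X$ --- is exactly the ordinary convention, so the argument goes through unchanged.
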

\begin{proof}

We prove the equality for $E_X(f)$; the argument for $E_Y(f)$ is identical by swapping $X$ and $Y$. Choose a singular $4k$-chain $D_Y$ with boundary $\partial D_Y=Y$. Then by definition
$$lk_{\bbR^{6k-1}}(X\cap X, Y)=(X\cap X)\cdot D_Y.$$
Set $L_{XY}=f^{-1}(X\cap Y)$ and $D_{XY}\coloneq f^{-1}(X\cap D_Y)$. Using transversality and a careful computation of orientations, we have $\partial D_{XY}=L_{XY}$, so that
$$lk_X(L_{XX},L_{XY})=L_{XX}\cdot D_{XY}.$$
Since $f$ restricted to $L_{XX}$ is a $2:1$ covering of $X\cap X$, as sets 
$$\abs{L_{XX}\cdot D_{XY}}=2\abs{(X\cap X)\cdot D_Y},$$
up to a sign. Now consider $X\cap X\subset \bbR^{6k-1}$ as the intersection of two sheets $X_+$ and $X_-$, oriented using the transverse intersection rule. By Ekholm convention, the preimages $L_{XX}^+\subset X_+$ and $L_{XX}^-\subset X_-$ inherit the orientation of $X\cap X$. For $L_{XX}\cdot D_{XY}$, the sign of each intersection point is determined by comparing $(Or(L_{XX}), Or(D_{XY}))$ with the orientation of the sheet $X_{\pm}$; equivalently, it depends on whether $Or(D_{XY})$ agrees with the complementary orientation of $X_{\pm}$. For $(X\cap X)\cdot D_Y$, note that the $4k$-dimensional complement $D_Y$ of $X\cap X$
 in $\bbR^{6k-1}$ naturally splits into two $2k$-dimensional pieces, one in each sheet. Hence, the sign of the intersection depends on whether $Or(D_Y)$ agrees with the complementary orientations of both $X_+$ and $X_-$ together. Therefore, the contributions to the sign from $L_{XX}\cdot D_{XY}$ and $(X\cap X)\cdot D_Y$ agree, which proves the claim.
 
\end{proof}

The following is an immediate corollary of Theorem~\ref{inv2} and Lemma~\ref{other}. Note that the formulas look similar to the $3$-component case in Theorem~\ref{inv1} and also to the knot case in Theorem~\ref{knotinv}: their first summands $E_X(f)$ and $E_Y(f)$ depend only on the
projection $f$, while the rest is a combination of linking numbers between the over- and under-crossings.
\begin{cor}\thlabel{inv2_imm}
    If the projection $f$ of $X$ (respectively, $Y$) to $\bbR^{6k-1}$ is an immersion, then  $$ \mathcal{W}_1(F)=E_X(f)-lk(L^+_{XX},L^-_{XY})-lk(L^-_{XX},L^+_{XY})-lk(L^+_{YX},L^-_{YX})$$
   $$ \Big(\text{respectively, } \mathcal{W}_2(F)=E_Y(f)-lk(L^+_{YY},L^-_{YX})-lk(L^-_{YY},L^+_{YX})-lk(L^+_{XY},L^-_{XY})\Big).  $$
    
 \end{cor}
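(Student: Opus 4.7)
The plan is to rewrite the first summand $\tfrac12\,lk_X(L_{XX},L_{XY})$ of $\mathcal W_1(F)$, computed with the \emph{adjusted} Ekholm orientation used in Theorem~\ref{inv2}, in terms of the \emph{ordinary} Ekholm orientation to which Lemma~\ref{other} applies. The second summand $lk_Y(L_{YX}^+,L_{YX}^-)$ involves the pieces $L_{YX}^\pm$ individually and so transfers verbatim.

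Recall that, when no Whitney umbrella points are present in the projection $f$ of $X$, the set $L_{XX}$ decomposes as $L_{XX}^+\sqcup L_{XX}^-$ (and similarly $L_{XY}=L_{XY}^+\sqcup L_{XY}^-$), with each piece oriented by the Ekholm rule of Subsection~\ref{orient}. The adjusted Ekholm orientation used in Theorem~\ref{inv2} flips the sign on the $-$ part, so for computing $lk_X$ under the adjusted convention one replaces $L_{ij}^-$ by $-L_{ij}^-$. Bilinearity of the linking number then yields
\begin{equation*}
\tfrac12 lk_X(L_{XX},L_{XY})^{\mathrm{ad}}=\tfrac12\bigl[lk(L_{XX}^+,L_{XY}^+)+lk(L_{XX}^-,L_{XY}^-)-lk(L_{XX}^+,L_{XY}^-)-lk(L_{XX}^-,L_{XY}^+)\bigr],
\end{equation*}
while the analogous expansion with the pure Ekholm convention gives
\begin{equation*}
\tfrac12 lk_X(L_{XX},L_{XY})^{\mathrm{Ek}}=\tfrac12\bigl[lk(L_{XX}^+,L_{XY}^+)+lk(L_{XX}^-,L_{XY}^-)+lk(L_{XX}^+,L_{XY}^-)+lk(L_{XX}^-,L_{XY}^+)\bigr].
\end{equation*}
Subtracting, the adjusted expression equals the Ekholm one minus $lk(L_{XX}^+,L_{XY}^-)+lk(L_{XX}^-,L_{XY}^+)$. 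By Lemma~\ref{other} the pure Ekholm expression is $E_X(f)$, so substituting into \eqref{eq:W1} yields exactly the claimed formula for $\mathcal W_1(F)$.

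The formula for $\mathcal W_2(F)$ follows by swapping the roles of $X$ and $Y$ throughout: Lemma~\ref{other} applies to $E_Y(f)$ under the hypothesis that the projection of $Y$ is an immersion, and the same bilinear expansion of the adjusted Ekholm linking number produces the correction terms $-lk(L_{YY}^+,L_{YX}^-)-lk(L_{YY}^-,L_{YX}^+)$. There is no genuine obstacle here; the entire content of the corollary is the book-keeping of the four sign combinations in $L_{ij}^+\sqcup \pm L_{ij}^-$, the rest being the already-established Lemma~\ref{other}.
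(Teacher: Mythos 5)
Your proof is correct and is precisely the bilinear-expansion computation that the paper implicitly invokes when it states the corollary is ``immediate'' from Theorem~\ref{inv2} and Lemma~\ref{other}. You correctly note that the immersion hypothesis is what guarantees $L_{XX}$ splits as a disjoint union of closed pieces $L_{XX}^+\sqcup L_{XX}^-$ (no Whitney umbrella boundary) so that the pure Ekholm orientation makes sense, and that the $lk_Y(L_{YX}^+,L_{YX}^-)$ term is unaffected by the adjustment since it references the $\pm$ pieces individually.
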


\end{subsection}

\end{section}

\begin{section}{Formula for the Haefliger invariant}
\label{conjecture}
We now present a combinatorial formula for the Haefliger invariant of embeddings $S^{4k-1}\hookrightarrow \bbR^{6k}$ whose projections to $\bbR^{6k-1}$ are generic immersions. Recall that the isotopy classes of smooth embeddings of $S^{4k-1}$ into $\bbR^{6k}$ are in one-to-one correspondence with the integers via the Haefliger invariant (see \cite{HAE2} and \cite[\S 5.16 and Corollary 8.14]{HAE}): $$\mathcal{H}:\pi_0Emb(S^{4k-1},\bbR^{6k})\xrightarrow{\approx} \bbZ.$$
Moreover, it is known that a nonzero multiple (two, when $k=1$) of the Haefliger trefoil knot, the generator of $\pi_0Emb(S^{4k-1},\bbR^{6k})$, can be isotoped to an embedding whose projection to $\bbR^{6k-1}$ is still an embedding (but non-trivial  as immersion in $\bbR^{6k-1}$), see \cite[Theorem~5.17 and Corollary~6.7]{HAE}. 
Immersions $S^{4k-1}\looparrowright\bbR^{6k-1}$ have been extensively studied in \cite{EK,EK2,EK1}. 

\begin{subsection}{Invariants of immersions $S^{4k-1}\looparrowright \bbR^{6k-1}$}

According to Smale \cite{SM}, there is a bijection between the set (group) of regular homotopy classes of immersions $S^{4k-1}\looparrowright \bbR^{6k-1}$ and the elements of the group $\pi_{4k-1}V_{6k-1,4k-1}$, where $V_{6k-1,4k-1}$ denotes the Stiefel manifold of $(4k-1)$-frames in $(6k-1)$-space. Given an immersion $f:S^{4k-1}\looparrowright \bbR^{6k-1}$, let $\Omega(f)\in \pi_{4k-1}V_{6k-1,4k-1}$ denote its Smale invariant. In these dimensions, it is easier to understand $\Omega(f)$ rationally. Consider the fiber sequence $S^{2k}\rightarrow V_{6k-1,4k-1}\rightarrow V_{6k-1,4k-2}$, which induces a rational isomorphism between $\pi_{4k-1}S^{2k}$ and $\pi_{4k-1}V_{6k-1,4k-1}$. Thus, any immersion $f:S^{4k-1}\rightarrow \bbR^{6k-1}$ can be related to the rational generator $[id, id]$ of $\pi_{4k-1}S^{2k}$, that is, $\Omega(f)=\Omega_{S^{2k}}(f)\cdot [id,id]\in \pi_{4k-1}V_{6k-1,4k-1}\otimes \bbQ$ for some $\Omega_{S^{2k}}(f)\in\bbQ$.
 
In order to understand the relation between the 
Smale invariant and the set of double points
of genereric immersions $S^{4k-1}\looparrowright \bbR^{6k-1}$, Ekholm~\cite{EK,EK2} introduced a \textit{linking invariant}, that we denote by $E$ here, see also~\cite{EK1}. 
Let $f:S^{4k-1}\looparrowright\bbR^{6k-1}$ be a generic immersion with the $(2k-1)$-dimensional self-intersection $f(L)\subset \bbR^{6k-1}$, where $L\subset S^{4k-1}$ is the preimage of the double points. The map $f|_{L}:L\rightarrow f(L)$ is a $2$-fold covering. Since the codimension of $f$ is even, we fix an orientation on $f(L)$ (and thus on $L$) to be the Ekholm orientation, see Subsection~\ref{orient}. Let $N(S^{4k-1})$ denote the $2k$-dimensional normal bundle of $f(S^{4k-1})$ in $\bbR^{6k-1}$, and let $N(S^{4k-1})|_{L}$ denote its restriction to $L$. Since the inclusion $L\hookrightarrow S^{4k-1}$ is null-homotopic, the normal bundle $N(S^{4k-1})|_{L}$ must be trivial: $$N(S^{4k-1})|_{L}\cong L\times \bbR^{2k},$$ and this trivialization is uniquely determined up to homotopy (since $\pi_iS^{4k-1}=0$ for $i\leq 2k$). Note that $N(S^{4k-1})|_{L}$ is naturally identified with the $2k$-dimensional normal bundle $N(L)$ over $L$ in~$S^{4k-1}$ pulled back along the involution. Let $(v_1,\ldots,v_{2k})$ be the framing of this homotopically unique trivialization of~$N(L)$. For each path component $L_i\subset L$, the set of non-vanishing sections over $N({L_i})$ is in one-to-one correspondence with the homotopy classes of maps from $L_i$ to $S^{2k-1}$, which are classified by their degree:$$\pi_0\Gamma(N(L_i)\backslash L_i)=[L_i^{2k-1},S^{2k-1}]=\bbZ.$$ Hence, we may choose a non-zero normal vector field $v$ along $L\subset S^{4k-1}$ such that $[\widetilde{L}]=0\in H_1(S^{4k-1}-L)$, where $\widetilde{L}$ denote a copy of $L$ shifted slightly along $v$. In particular, for each component $L_i$ we get\begin{equation}\label{framing}
    lk(L_i,\widetilde{L})=lk(L_i,\widetilde{L}_i)+\underset{j\neq i}{\sum} lk(L_i,L_j)=0.
\end{equation}
Since $lk(\widetilde{L_i},L_j)=lk(L_i,\widetilde{L_j})$, the above equation can be rewritten as 
  \begin{equation}\label{ekfr1}
     lk(\widetilde{L_i},L)=lk(\widetilde{L_i},L_i)+\underset{j\neq i}{\sum} lk(L_i,L_j)=0.
 \end{equation}
Define a vector field $w$ along $f(L)$ such that at the double point $p=f(p_1)=f(p_2)$, we have $w(p)\coloneq df(v(p_1))+df(v(p_2))$. Let $\widetilde{f(L)}\subset \bbR^{6k-1}$ be the result of pushing $f(L)$ slightly along~$w$. Then $\widetilde{f(L)}\cap f(S^{4k-1})=\emptyset$, and we define $$E(f):= lk(\widetilde{f(L)}, f(S^{4k-1})),$$
where the linking number is computed in $\bbR^{6k-1}$.   
\end{subsection} 

\begin{subsection}{The formula}

For computational purposes, it is easier to work with  long immersions and embeddings, so we state our formula in that case. However, recall that $\pi_0Emb(S^{4k-1},\bbR^{6k})=\pi_0Emb_{\partial}(\bbR^{4k-1},\bbR^{6k})$ and $\pi_0Imm(S^{4k-1},\bbR^{6k-1})=\pi_0Imm_{\partial}(\bbR^{4k-1},\bbR^{6k-1})$. Moreover, formula~\eqref{conh1} also holds for spherical knots. One shall only be careful with the definition of $\Omega_{S^{2k}}(f)$. 
    \begin{thm}\thlabel{knotinv}
    Consider a long knot $F:\bbR^{4k-1}\hookrightarrow \bbR^{6k}$ such that its projection $f=p\circ F:\bbR^{4k-1}\looparrowright \bbR^{6k-1}$ is a generic immersion, with $L=L^+\sqcup L^-\subset \bbR^{4k-1}$ as the set of double points oriented using the Ekholm orientation. Then, the formula  \begin{equation}\label{conh1}
    \mathcal{H}(F):=\frac{1}{2}lk(L^+,L^-)+\frac{1}{6}E(f)+\frac{1}{3}\Omega_{S^{2k}}(f),\end{equation}
 produces the Haefliger isomorphism $\mathcal{H}:~\pi_0Emb_{\partial}(\bbR^{4k-1},\bbR^{6k})\xrightarrow{\approx} \bbZ$.
\end{thm}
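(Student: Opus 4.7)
The plan is to prove the formula in three stages: (A) show that the right-hand side of \eqref{conh1} is invariant under isotopies of $F$ that preserve the generic-immersion property of the projection $f$; (B) invoke the Compression Theorem \cite{COM} to ensure every isotopy class of long knots $\bbR^{4k-1}\hookrightarrow\bbR^{6k}$ contains such a representative, so that the right-hand side of \eqref{conh1} is defined on all of $\pi_0Emb_\partial(\bbR^{4k-1},\bbR^{6k})$; (C) identify the resulting invariant with the Haefliger isomorphism.

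For stage (A), I would consider a generic smooth isotopy $\{F_t\}_{t\in[0,1]}$ such that $\{f_t=p\circ F_t\}$ is a generic regular homotopy of self-transverse immersions. The non-generic events in such a family are embedded cobordism events in $L(f_t)$ and triple points of $f_t$. Embedded cobordism events leave each of the three terms in \eqref{conh1} unchanged, since linking numbers are preserved under such cobordisms and $\Omega_{S^{2k}}(f_t)$ is by definition a regular homotopy invariant of $f_t$. At a triple point, $\Omega_{S^{2k}}(f_t)$ is again constant, whereas both $lk(L^+,L^-)$ and Ekholm's $E(f_t)$ jump. Using local coordinate models analogous to those in the proof of Theorem~\ref{inv1} -- but now applied to a single component, since all three sheets at the triple point belong to the same knot -- I would compute the change in each term across the three inequivalent height orderings of the colliding sheets. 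The coefficients $\tfrac12$ and $\tfrac16$ in \eqref{conh1} should be precisely those that force the aggregate change to vanish; the presence of $\tfrac16$ naturally reflects an $S_3$-averaging over height orderings of three sheets of a \emph{single} component, in contrast to the $3$-component situation of Theorem~\ref{inv1}.

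For stage (B), the Compression Theorem of \cite{COM}, applied to the projection direction $p:\bbR^{6k}\to\bbR^{6k-1}$ (the dimension inequality $4k-1<6k-1$ is satisfied for all $k\geq 1$), guarantees that every long knot $F$ is isotopic to one whose projection is an immersion; a further generic perturbation then makes this immersion self-transverse and free of Whitney umbrellas. For stage (C), I would analyze how $\mathcal{H}(F)$ changes under a crossing change, that is, a swap of over/under at one connected component of $f(L)$. Since such a change leaves $f$ unchanged, both $E(f)$ and $\Omega_{S^{2k}}(f)$ are preserved, and the entire change of $\mathcal{H}(F)$ sits in $\tfrac12 lk(L^+,L^-)$. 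I would then verify that the resulting crossing change formula matches Sakai's \cite[Theorem~2.5]{SAK2}, as suggested by Subsection~\ref{ss:Sakai}. Together with the trivial evaluation on the standard inclusion -- where $L=\emptyset$ and $E(f)=\Omega_{S^{2k}}(f)=0$, so $\mathcal{H}(F)=0$ -- and the fact that iterated crossing changes generate $\pi_0Emb_\partial(\bbR^{4k-1},\bbR^{6k})\cong\bbZ$ from the unknot, this identifies $\mathcal{H}(F)$ with the Haefliger invariant, and the integrality of \eqref{conh1} then follows automatically.

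The main obstacle I anticipate is the triple-point invariance in stage (A): computing $\Delta E(f)$ requires tracking Ekholm's auxiliary normal framing $v$ on $L$ through the restructuring of $L$ at a triple point, where the homotopy-unique trivialization of $N(L)$ can shift in a way that is genuinely entangled with the jump of $\tfrac12 lk(L^+,L^-)$. Verifying that the rational weights $\tfrac12$, $\tfrac16$, $\tfrac13$ are exactly right -- rather than just right up to an overall scalar -- will also demand evaluation on an explicit generator, for which a controllable modification of the Haefliger trefoil whose projection becomes a generic immersion should suffice.
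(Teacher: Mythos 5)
Your stage (A) matches Lemma~\ref{reghom} in the paper almost exactly, and the triple-point bookkeeping you anticipate is indeed the content of that lemma. The genuine gap lies in stage (B): you pass from ``every isotopy class has a representative with generic immersion projection'' (which the ordinary Compression Theorem gives) to ``the right-hand side of \eqref{conh1} is defined on all of $\pi_0 Emb_\partial(\bbR^{4k-1},\bbR^{6k})$'' as though those were the same thing. They are not. Two isotopic long knots $F_0, F_1$ both with generic immersion projections need not be joined by an isotopy whose projection stays a regular homotopy throughout, so stage (A) alone does not make the formula well defined. The parametrized Compression Theorem lets one compress an isotopy, but only after lifting to the space $Emb^+_\partial(\bbR^{4k-1},\bbR^{6k})$ of knots with a nowhere-zero normal vector field; it produces well-definedness of $\theta_1 = \tfrac12 lk(L^+,L^-)+\tfrac16 E$ and $\theta_2 = \Omega_{S^{2k}}$ as functions on $\pi_0 Emb^+_\partial(\bbR^{4k-1},\bbR^{6k}) \cong \bbZ\oplus\pi_{4k-1}S^{2k}$, not on $\pi_0 Emb_\partial$. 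Each of $\theta_1$ and $\theta_2$ separately \emph{does} depend on the choice of framing, i.e., they do not descend to $\pi_0 Emb_\partial$.

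What's missing -- and what the paper actually spends most of the proof on -- is the verification that the specific combination $\theta=\theta_1+\tfrac13\theta_2$ is insensitive to the framing, i.e., vanishes on the $\pi_{4k-1}S^{2k}$ summand. This is a nontrivial input: the paper proves it by applying the cabling map $C:Emb^+_\partial \to Emb_\partial(\sqcup_2\bbR^{4k-1},\bbR^{6k})$, composing with the $2$-component invariant $\mathcal{W}$ from Theorem~\ref{inv2}, evaluating on the compressed multiple $c\mathcal T$ of the Haefliger trefoil (where one sees $\mathcal{W}(C(c\mathcal T,v))=(0,0)$), and comparing with the explicit computations $\mathcal{W}(C(\mathcal{T},\mathbf{0}))=(-6,-6)$ and $\mathcal{W}(G_*[id,id])=(2,2)$ to solve for the framing coefficient. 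Nothing in your proposal replaces this; in particular, your stage (C) (crossing changes plus Sakai's formula plus evaluation on a generator) only kicks in \emph{after} well-definedness is secured, and the unknot evaluation you describe does not detect the framing ambiguity since you would be free to choose the trivial framing there. Your instinct that the hard part is the triple-point jump in $E$ underestimates the issue: the jump calculation is delicate but local and mechanical, whereas the framing-independence of $\theta$ is a global statement requiring the $2$-component link machinery.

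A secondary remark: your stage (C) proposes identifying $\mathcal H$ with the Haefliger invariant via the crossing-change formula. That is a viable alternative to the paper's direct evaluation on $\mathcal T$ (Example~\ref{haefeg}), provided you also establish that crossing changes on generic-immersion representatives act transitively up to the claimed arithmetic -- but again, this step is downstream of well-definedness.
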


\begin{example}\thlabel{haefeg}
    We claim that $\mathcal{H}(\mathcal{T})=1$, where $\mathcal{T}$ denotes the Haefliger trefoil knot, which is constructed by tubing the three components of the Borromean link $\mathcal{B}=X\sqcup Y\sqcup Z\subset \bbR^{6k}$, as shown in Figure~\ref{fig:HT}. 
    \begin{figure}[h]
\centering
 \includegraphics[width=0.45\textwidth]{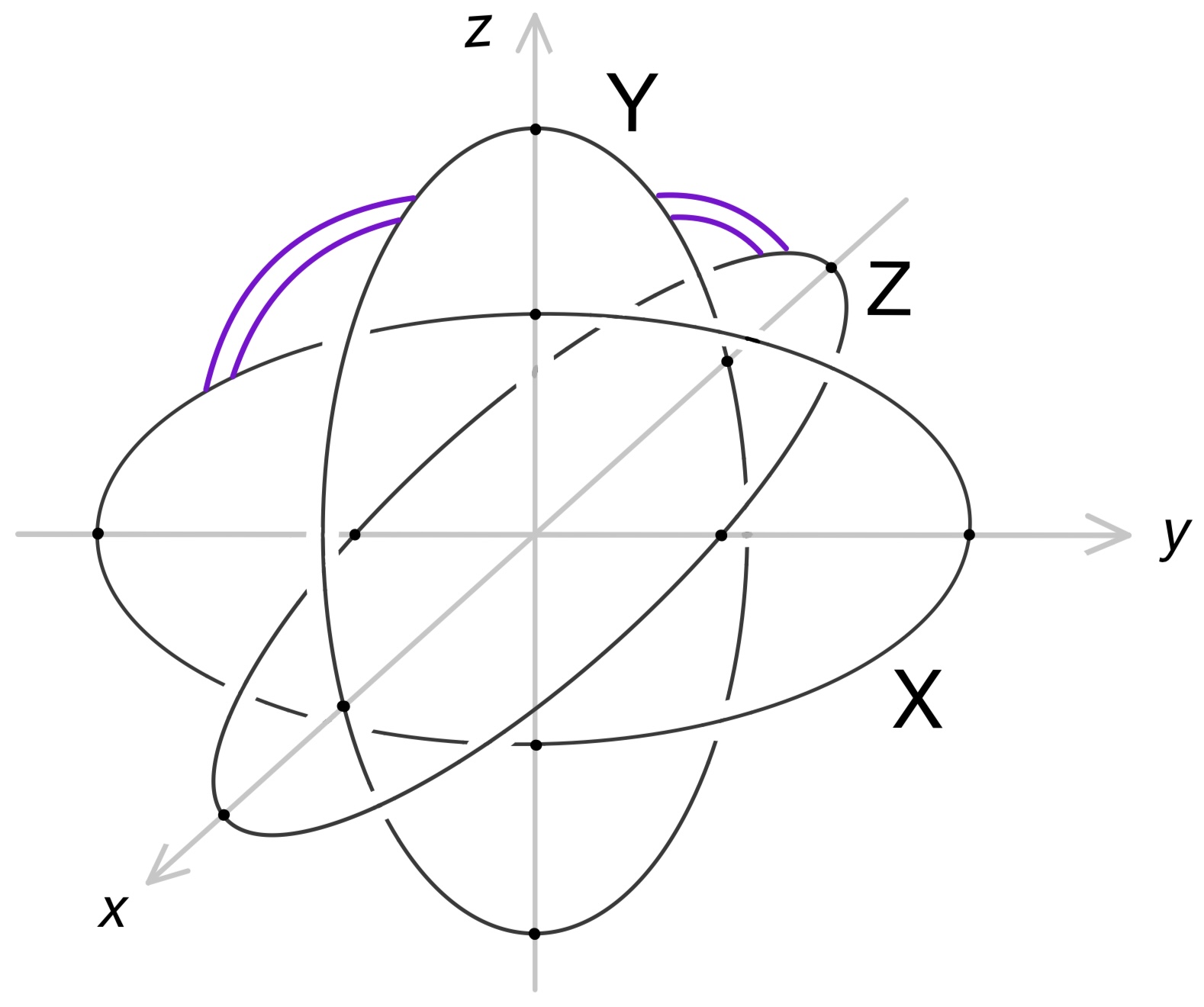}
 \caption{Haefliger trefoil knot $\mathcal{T}$}
 \label{fig:HT}
 \end{figure}
    
We use the description of $\mathcal{B}$ as given in Example~\ref{borromean} (now $\ell=2k$). The Haefliger trefoil is defined as the connected sum $\mathcal{T}\coloneq X\#Y\#Z\#F_0$, where $F_0:\bbR^{4k-1}\subset \bbR^{6k}$ is the standard inclusion. Let $f\coloneq p\circ \mathcal{T}$ be the generic immersion to $\bbR^{6k-1}$, where $p$ is projection along the vector $(1,\dots,1)\in\bbR^{6k}$.
Choosing the tubes in general position, the self-intersection of $f$ agrees with that of $p\circ \mathcal{B}$, consisting of six disjoint $(2k-1)$-spheres in~$\bbR^{6k-1}$. The double point set $L=L^+\sqcup L^-\subset \bbR^{4k-1}$ of $f$ is also the same: six disjoint Hopf links, each with linking number $-1$ (see Figure~\ref{fig:6hopf}). Since none of these Hopf links involve the linking between an overcrossing and undercrossing components, we have $lk(L^+,L^-)=0$. Moreover, $\Omega(f)=0$ because $f$ is trivial as an immersion, as it can be 
regularly homotoped so that $X,Y,Z$
become  standard copies of $S^{4k-1}\subset \bbR^{4k}\subset\bbR^{6k-1}$. Thus, the only non-trivial contribution comes from the term $E(f)$. 

We compute $E(f)$ using the three pairwise intersections $f(X)\cap f(Y), f(X)\cap f(Z),$ and $f(Y)\cap f(Z)$. Consider $f(X)\cap f(Y)=A_1\sqcup A_2$, with preimages $L_{XY}=L_{XY}^+\sqcup L_{XY}^-$ and $L_{YX}=L_{YX}^+\sqcup L_{YX}^-$ such that $A_1=f(L_
{XY}^+)=f(L_{YX}^-)$ and $A_2=f(L_{XY}^-)=f(L_{YX}^+)$. 

 For each $A_i$, let $\widetilde{A_i}$ be the shift of $A_i$ along the sum of the shifts corresponding to its preimages, see the definition of $E$. Moreover, let $(\widetilde{A_{i}})_X$ (respectively, $(\widetilde{A_{i}})_Y$) be the shift of $A_i$ only along the framing inside $X$ (respectively, $Y$). It is easy to check that $$lk_{X}(\widetilde{L_{XY}^+},L_{XY})=lk_{\bbR^{6k-1}}((\widetilde{A_1})_X,f(Y)),$$ using the general position argument and comparing the orientations on both sides. 
 Similarly, $$lk_{Y}(\widetilde{L_{YX}^-},L_{YX})=lk_{\bbR^{6k-1}}((\widetilde{A_1})_Y,f(X)),$$ and $$lk_{X}(L_{XY}^+,L_{XZ})=lk_{\bbR^{6k-1}}(A_1,f(Z))=lk_{Y}(L_{YX}^-,L_{YZ}).$$ 


Then, it follows that
\begin{align*}
lk_{\bbR^{6k-1}}(\widetilde{A_1},f(\bbR^{4k-1}))  =& \quad lk((\widetilde{A_1})_X,f(Y))+lk((\widetilde{A_1})_Y,f(X))+lk(\widetilde{A_1},f(Z))\\
=& \quad lk_{X}(\widetilde{L_{XY}^+},L_{XY})+ lk_{Y}(\widetilde{L_{YX}^-},L_{YX})+lk_X(L_{XY}^+,L_{XZ})\\
& \Big(\text{ note that the last term can also be written as  } lk_{Y}(L_{YX}^-,L_{YZ})\Big)\\
=& \quad \underbrace{lk_{X}(\widetilde{L_{XY}^+},L_{XY})+lk_X(L_{XY}^+,L_{XZ})}_{=0 \text{ (using }\eqref{ekfr1}) }+lk_{Y}(\widetilde{L_{YX}^-},L_{YX})\\
=& \quad -lk_Y(L_{YX}^-,L_{YZ}) \qquad \Big(\text{  or  } -lk_{X}(L_{XY}^+,L_{XZ}) \Big)
\end{align*}
(the last equality follows by using \eqref{ekfr1} one more time). 

A parallel computation yields $$lk_{\bbR^{6k-1}}(\widetilde{A_2},f(\bbR^{4k-1}))= -lk_Y(L_{YX}^+,L_{YZ}) \quad \Big(\text{  or  } -lk_{X}(L_{XY}^-,L_{XZ}) \Big).$$

Thus, \begin{align*}
    lk_{\bbR^{6k-1}}(\widetilde{f(X)\cap f(Y)}, f(\bbR^{4k-1}))&= lk(\widetilde{A_1},f(\bbR^{4k-1}))+lk(\widetilde{A_2},f(\bbR^{4k-1}))\\
    &=-lk_Y(L_{YX}^-,L_{YZ})-lk_Y(L_{YX}^+,L_{YZ})\\
    &=-lk_Y(L_{YX},L_{YZ})\qquad \Big(\text{  or  } -lk_{X}(L_{XY},L_{XZ}) \Big)
\end{align*}

 We get similar results by repeating the argument for the other two intersections $f(Y)\cap f(Z)$ and $f(Z)\cap f(X)$. Thus, $E(f)=-3lk_X(L_{XY},L_{XZ})=-3lk_Y(L_{YX},L_{YZ})=-3lk_Z(L_{ZX},L_{ZY}).$
In any component $X,Y,Z$, there are two Hopf links, each with linking number $-1$. Hence, \begin{align*}
    \mathcal{H}(\mathcal{T})&=\frac{1}{6}\cdot \Big(-3\cdot lk_X(L_{XY}^+,L_{XZ}^+)-3\cdot lk_X(L_{XY}^-,L_{XZ}^-)\Big)\\
    &=\frac{1}{6}\cdot 6=1
\end{align*}

\end{example}

\begin{lemma}\thlabel{reghom}
    As defined in \eqref{conh1}, $\mathcal{H}$ remains invariant under isotopy whose projection is a regular homotopy, that is, under the third Reidemeister move and cobordism moves without Whitney umbrellas.
\end{lemma}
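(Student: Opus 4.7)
The plan is to verify invariance of each of the three terms in \eqref{conh1} separately under the two kinds of moves allowed by the hypothesis. The first term is immediate: since the projection is a regular homotopy, $\Omega_{S^{2k}}(f)$ is constant by construction, being a regular-homotopy invariant of $f$.

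For the embedded cobordism moves (the second Reidemeister analog and the simultaneous double-saddle, neither of which introduces Whitney umbrellas), both $L^+$ and $L^-$ undergo cobordisms in $\bbR^{4k-1}$ and remain disjoint throughout (the absence of Whitney umbrellas means no new $L^+$--$L^-$ incidences appear). The linking number $lk(L^+,L^-)$ is then an invariant of the pair of disjoint cobordism classes. Likewise, $E(f) = lk(\widetilde{f(L)}, f(\bbR^{4k-1}))$ depends only on the homology class of $\widetilde{f(L)}$ in the complement of $f(\bbR^{4k-1})$, and the Ekholm framing $v$ determined by \eqref{ekfr1} extends naturally across the cobordism (any bounding disk of a vanishing component of $L$ admits a homotopically unique extension of $v$ in these dimensions), so $E(f)$ is preserved.

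The main case is the triple-point move. I would set up a local model analogous to those in the proofs of Theorems~\ref{inv1} and~\ref{inv2}, with three sheets $X_1,X_2,X_3$ of the same knot meeting transversally at the origin of $\bbR^{6k-1}$ at heights $a>b>c$ (other orderings reduce to this by symmetry). The three pairwise double intersections $A=X_1\cap X_2$, $B=X_1\cap X_3$, $C=X_2\cap X_3$ have preimages $A^\pm,B^\pm,C^\pm$ in $\bbR^{4k-1}$ distributed into $L^\pm$ according to which sheet lies above. As the middle sheet $X_2$ passes through the triple point, the two preimages $C^+$ and $A^-$ lying in the moving sheet $X_2$ cross each other in $\bbR^{4k-1}$, and this is the only pair consisting of a $+$ and a $-$ component to do so, giving $\Delta lk(L^+,L^-) = \pm 1$. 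Meanwhile the shifted double-point sets $\widetilde{f(A)},\widetilde{f(B)},\widetilde{f(C)}$ each acquire one new transverse intersection with the remaining sheet ($X_3,X_2,X_1$ respectively) near the triple point, and one checks that they all carry the same sign; hence $\Delta E(f) = \pm 3$. Combining, $\tfrac{1}{2}\Delta lk(L^+,L^-) + \tfrac{1}{6}\Delta E(f) = \tfrac{1}{2}(\pm 1) + \tfrac{1}{6}(\mp 3) = 0$.

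The main obstacle is this last sign computation for $\Delta E(f)$: verifying that all three local contributions to $\Delta E$ carry the same sign (so that they add to $\pm 3$ rather than partially canceling), and that this common sign is opposite to that of $\Delta lk(L^+,L^-)$. A secondary subtlety is that the Ekholm framing $v$ itself is determined by the homological conditions \eqref{ekfr1}, which are altered by the triple-point move; I would need to check that any adjustment of $v$ required to re-satisfy \eqref{ekfr1} after the move contributes no additional change to $E(f)$ beyond the three local transverse intersections already counted. Once these two points are confirmed, the coefficients $\tfrac{1}{2}$ and $\tfrac{1}{6}$ in \eqref{conh1} are exactly those required to produce the cancellation.
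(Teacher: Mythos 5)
Your overall strategy matches the paper's: treat $\Omega$ separately, argue that $E$ and $lk(L^+,L^-)$ are unchanged by cobordism moves without Whitney umbrellas, and then do a local computation at a triple point, aiming for $\tfrac12\Delta lk(L^+,L^-)+\tfrac16\Delta E(f)=0$. However, there is a genuine gap in your treatment of the triple-point move, and it is precisely the point you flag as a ``secondary subtlety'' and hope will turn out to be harmless.

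You count, for each of the three double-intersection components $A$, $B$, $C$, a single new transverse intersection of the shifted set $\widetilde{f(\cdot)}$ with the remaining sheet, and you hope that re-choosing the Ekholm framing $v$ to satisfy \eqref{ekfr1} ``contributes no additional change to $E(f)$''. This expectation is wrong, and would in fact make the cancellation fail. The paper's computation shows that $\Delta lk(\widetilde{A_{XZ}},f_t(\bbR^{4k-1}))$ has \emph{three} local contributions: the direct intersection of (a small pushoff of) $A_{XZ}$ with $f_t(Y)$ contributes $+1$, but the homological normalization \eqref{ekfr1} forces each of $\widetilde{L_{XZ}^+}\subset X$ and $\widetilde{L_{ZX}^-}\subset Z$ to change by $-1$, giving contributions $\Delta lk\big((\widetilde{A_{XZ}})_X,f_t(Z)\big)=-1$ and $\Delta lk\big((\widetilde{A_{XZ}})_Z,f_t(X)\big)=-1$. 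The net is $-1$ per component, hence $\Delta E=-3$, while $\Delta lk(L^+,L^-)=+1$; only then does $\tfrac12(1)+\tfrac16(-3)=0$. If the framing contribution were negligible, as you hope, you would get $\Delta E=+3$, the \emph{same} sign as $\Delta lk(L^+,L^-)$, and $\tfrac12(1)+\tfrac16(3)=1\neq0$. So the framing adjustment is not a side issue to be dismissed; it is the dominant mechanism that flips the sign of $\Delta E$ and makes the coefficients work. You would need to incorporate it into the computation along the lines of the paper (or of Ekholm's papers, where the jump $\Delta E=\pm3$ is established) rather than hope it vanishes.

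A smaller point: the paper does not attempt to re-derive the fact that $E$ is invariant under Whitney-umbrella-free cobordism moves or that $|\Delta E|=3$ at a triple point; it cites Ekholm for these, and only computes the \emph{relative} sign between $\Delta E$ and $\Delta lk(L^+,L^-)$ in a local model. Your homological argument for the cobordism-move invariance of $E$ is plausible but would still need care with how the framing extends across saddles; citing Ekholm is cleaner.
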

\begin{proof}
 Since the Smale invariant $\Omega$ is already preserved under regular homotopy, it suffices to analyze the first two terms. As shown in \cite{EK,EK2,EK1}, $E$ is unchanged under cobordism moves that avoid Whitney umbrellas, but it jumps by $\pm 3$ under the triple point move; so with the factor $\frac{1}{6}$ this gives $\pm\frac{1}{2}$. The linking number $lk(L^+,L^-)$ also remains constant under such cobordism moves, since no new over/undercrossing linkings are introduced. However, under the third Reidemeister move (see Figure~\ref{fig:3rd}), it changes by $\pm 1$,  thus also contributing $\pm\frac{1}{2}$. We now show that these two contributions cancel, ensuring invariance of the total expression.

 Let $\{F_t\}$ be a $1$-parameter family of long embeddings $\bbR^{4k-1}\hookrightarrow\bbR^{6k}$, and let $\{f_t\}$ denote their projections to $\bbR^{6k-1}$, forming a generic $1$-parameter family of immersions.
Suppose $f_0$ has a triple point $0\in\bbR^{6k-1}$, with three sheets $X$, $Y$ and $Z$ ordered so that $X$ lies over $Y$, and $Y$ over $Z$. With the same local coordinates around the triple point as chosen in the proof of Theorem~\ref{inv1} (now $\ell=2k$), only  $lk(L_{XY}^+,L_{XZ}^+),lk(L_{YZ}^+,L_{YX}^-),$ and $lk(L_{ZX}^-,L_{ZY}^-)$ are affected. It is straightforward to verify that each of these linking numbers increases by one when moving through the triple point. Furthermore, only $lk(L_{YZ}^+,L_{YX}^-)$ contributes to the change in $lk(L^+,L^-)$, and therefore $\Delta lk(L^+,L^-)=1$. 

To see the change in $E$, let us choose one (out of three) of the self-intersections, say $$A_{XZ}=f_t(X)\cap f_t(Z)=(0,0,\underbrace{0,\ldots,0}_{2k-1},\tilde{s}_1,\ldots,\tilde{s}_{2k-1},\underbrace{0,\ldots,0}_{2k-1}),$$ with preimages $L_{XZ}^+$ and $L_{ZX}^-$. By the definition of $E$, the shifted copies of these preimages must satisfy \eqref{framing}, that is,
$$\Delta lk(L_{XZ}^+,\widetilde{L_{XZ}^+})=-\Delta lk(L_{XZ}^+,L_{XY}^+)=-1,$$ $$\Delta lk(L_{ZX}^-,\widetilde{L_{ZX}^-})=-\Delta lk(L_{ZX}^-,L_{ZY}^-)=-1,$$ (we assume that the shifting distance is very small compared to $\delta$).

Let $\widetilde{(A_{XZ})}_X$ (respectively, $\widetilde{(A_{XZ})}_Z$) be the shift of $A_{XZ}$ along the framing in $X$ (respectively, $Z$) corresponding to $\widetilde{L_{XZ}^+}$ (respectively, $\widetilde{L_{ZX}^-}$). As we move through the triple point, $\Delta lk\big(\widetilde{A_{XZ}}, f_t(\bbR^{4k-1})\big)$ will have three contributions: $$\Delta lk\big(\widetilde{(A_{XZ})}_X, f_t(Z)\big),\quad \Delta lk\big(\widetilde{(A_{XZ})}_Z, f_t(X)\big), \text{  and  }\quad\Delta lk\big(A_{XZ}, f_t(Y)\big),$$ where the last one does not depend on the shift. Since $\Delta lk(L_{XZ}^+,\widetilde{L_{XZ}^+})=-1=\Delta lk(L_{ZX}^-,\widetilde{L_{ZX}^-})$, thus $\Delta lk\big(\widetilde{(A_{XZ})}_X, f_t(Z)\big)=-1=\Delta lk\big(\widetilde{(A_{XZ})}_Z, f_t(X)\big)$.

Let $A_{XZ}$ be the boundary of a singular $2k$-chain $$D(A_{XZ})=\{(0,0,\underbrace{0,\ldots,0}_{2k-1},\tilde{s}_1,\ldots,\tilde{s}_{2k-1},\underbrace{0,\ldots,0}_{2k-1})|r\leq 0\}.$$ For $t=-\delta$, $D(A_{XZ})$ intersects $f_t(Y)$ exactly at one point at which the orientation is given by $$(\underbrace{e_1,e_{2k+2},\ldots,e_{4k}}_{Or(D(A_{XZ}))},\underbrace{e_2,e_{4k+1},\ldots,e_{6k-1}, e_3,\ldots, e_{2k+1}}_{Or(f_t(Y))}),$$ which is negative of the standard orientation on $\bbR^{6k-1}$. Thus, the algebraic number of intersection $D(A_{XZ})\cdot f_{-\delta}(Y)$ differs from $D(A_{XZ})\cdot f_{\delta}(Y)$ by $-1$ and we get $\Delta lk\big(A_{XZ}, f_t(Y)\big)=1$. The net change is then: $$\Delta lk\big(\widetilde{A_{XZ}}, f_t(\bbR^{4k-1})\big)=-1-1+1=-1.$$ Similarly, one can check that $\Delta lk\big(\widetilde{A_{XY}}, f_t(\bbR^{4k-1})\big)$ and $\Delta lk\big(\widetilde{A_{YZ}}, f_t(\bbR^{4k-1})\big)$ are $-1$ when $t$ goes from $-\delta$ to $\delta$. Thus, $E(f_{\delta})-E(f_{-\delta})=-3$ and the overall change in our formula is given by
\begin{align*}
\Delta \mathcal{H}(F) &= \mathcal{H}(F_{\delta})- \mathcal{H}(F_{-\delta})\\
    &= \frac{1}{2}\Delta lk(L^+,L^-)+\frac{1}{6}\Delta E(f)-\frac{1}{3}\Delta\Omega_{S^{2k}}(f)\\
    &= \frac{1}{2}\cdot 1+\frac{1}{6}\cdot (-3)-\frac{1}{3}\cdot 0\\
    &=0.
\end{align*}
\end{proof}

For the proof of Theorem~\ref{knotinv}, we require an additional space $Emb_{\partial}^+(\bbR^{4k-1},\bbR^{6k})$ and a cabling map on it. Let $Emb_{\partial}^+(\bbR^{4k-1},\bbR^{6k})$ denote the space of long embeddings $F:\bbR^{4k-1}\hookrightarrow\bbR^{6k}$ equipped with a nowhere zero normal vector field $n$, such that both $F$ and $n$ agree with the standard behavior outside a compact set. More precisely, outside a compact set, the normal field is chosen to be $\mathbf{0}:=e_{6k-1}+e_{6k}$ (chosen to avoid tangency with any component of the Borromean link). Each element of this space is given by a pair $(F,n)$. We define the cabling map $$C:Emb_{\partial}^+(\bbR^{4k-1},\bbR^{6k})\rightarrow Emb_{\partial}(\underset{2}{\sqcup}\bbR^{4k-1},\bbR^{6k})$$ by
$$C(F,n)= F\sqcup \widetilde{F}_n,$$
where $\widetilde{F}_n$ denotes a shifted copy of $F$ along the normal field~$n$.

\begin{lemma}\thlabel{embplus}
    \begin{align*}
        \pi_0Emb_{\partial}^+(\bbR^{4k-1},\bbR^{6k})&=\pi_0Emb_{\partial}(\bbR^{4k-1},\bbR^{6k})\oplus \pi_{4k-1}S^{2k}\\
        &= \bbZ\oplus \pi_{4k-1}S^{2k}.
    \end{align*}
\end{lemma}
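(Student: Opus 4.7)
The plan is to analyze the forgetful fibration
\[
\mathcal{N}(F_0) \hookrightarrow Emb_{\partial}^+(\bbR^{4k-1},\bbR^{6k}) \xrightarrow{p} Emb_{\partial}(\bbR^{4k-1},\bbR^{6k}), \qquad p(F,n)=F,
\]
whose fiber $\mathcal{N}(F_0)$ over the standard embedding $F_0$ is the space of nowhere-zero normal vector fields agreeing with $\mathbf{0}=e_{6k-1}+e_{6k}$ outside a compact set. The canonical trivialization of the normal bundle of $F_0(\bbR^{4k-1})$ at infinity extends uniquely up to homotopy over the contractible base, exhibiting $\mathcal{N}(F_0) \simeq \Omega^{4k-1}(\bbR^{2k+1}\setminus\{0\}) \simeq \Omega^{4k-1}S^{2k}$, so $\pi_0\mathcal{N}(F_0)=\pi_{4k-1}(S^{2k})$.

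The tail of the long exact sequence of the fibration reads
\[
\pi_1 Emb_{\partial} \xrightarrow{\partial} \pi_{4k-1}(S^{2k}) \xrightarrow{\iota} \pi_0 Emb_{\partial}^+ \xrightarrow{p_*} \pi_0 Emb_{\partial} \to 0.
\]
Surjectivity of $p_*$ holds because any long embedding admits a nowhere-zero normal field matching $\mathbf{0}$ at infinity: use $\mathbf{0}$ outside a compact set and extend inside using triviality of the rank-$(2k+1)$ normal bundle. It therefore suffices to show $\partial = 0$, equivalently that $\iota$ is injective.

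The injectivity of $\iota$ is the main obstacle, and my plan is to establish it via the cabling map $C: Emb_{\partial}^+ \to Emb_{\partial}(\underset{2}{\sqcup}\bbR^{4k-1},\bbR^{6k})$ introduced just before the lemma. I claim that the composite
\[
\pi_{4k-1}(S^{2k}) \xrightarrow{\iota} \pi_0 Emb_{\partial}^+ \xrightarrow{C_*} \pi_0 Emb_{\partial}(\underset{2}{\sqcup}\bbR^{4k-1},\bbR^{6k})
\]
agrees, up to a sign, with the graphing map $G_*$ of Theorem~\ref{retract} applied to $C(2,\bbR^{2k+1})\simeq S^{2k}$. Indeed, for $(F_0, n_\alpha)$ with $n_\alpha$ representing $\alpha \in \pi_{4k-1}(S^{2k})$, the cabled link $F_0 \sqcup \widetilde{F}_{0,n_\alpha}$ consists of two copies of the standard line whose relative displacement $n_\alpha: \bbR^{4k-1} \to \bbR^{2k+1}\setminus\{0\}$ is precisely the loop in $C(2,\bbR^{2k+1})\simeq S^{2k}$ defining $G(\alpha)$. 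Since Theorem~\ref{retract} provides a retraction of $G$, $G_*$ is injective on $\pi_0$, so $\iota$ is also injective.

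Once $\iota$ is injective we obtain a short exact sequence
\[
0 \to \pi_{4k-1}(S^{2k}) \to \pi_0 Emb_{\partial}^+ \to \pi_0 Emb_{\partial} \to 0
\]
of abelian groups, with group structures induced by concatenation of long embeddings (extending naturally to normal fields via the boundary condition $\mathbf{0}$). Since $\pi_0 Emb_{\partial} \cong \bbZ$ is free, the sequence splits, yielding $\pi_0 Emb_{\partial}^+ \cong \bbZ \oplus \pi_{4k-1}(S^{2k})$ as claimed.
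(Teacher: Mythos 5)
Your proof is correct and takes essentially the same route as the paper: the same forgetful fibration with fiber $\Omega^{4k-1}S^{2k}$, the same identification of $C_*\circ\iota$ with the graphing map $G_*$ of Theorem~\ref{retract}, and the same appeal to the retraction to get injectivity of $\iota$. The only (cosmetic) difference is at the final step: the paper produces an explicit section of $j_*$ by assigning a standard normal field using triviality of the normal bundle, whereas you deduce the splitting from the freeness of $\pi_0Emb_{\partial}(\bbR^{4k-1},\bbR^{6k})\cong\bbZ$.
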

\begin{proof}
Consider the fiber sequence $$\Omega^{4k-1}S^{2k}\rightarrow Emb_{\partial}^+(\bbR^{4k-1},\bbR^{6k})\rightarrow Emb_{\partial}(\bbR^{4k-1},\bbR^{6k}),$$ that produces a long exact sequences of homotopy groups:
\begin{equation}\label{seq}
    \ldots\rightarrow \pi_{4k-1}S^{2k}\xrightarrow{i_*} \pi_0Emb_{\partial}^+(\bbR^{4k-1},\bbR^{6k})\xrightarrow{j_*} \pi_0Emb_{\partial}(\bbR^{4k-1},\bbR^{6k}).
\end{equation}
Any long embedding $\bbR^{4k-1}\hookrightarrow \bbR^{6k}$ has a trivial normal bundle (the Haefliger trefoil is regular homotopic to the trivial immersion). This implies that there exists a section $s:\pi_0Emb_{\partial}(\bbR^{4k-1},\bbR^{6k})\rightarrow \pi_0Emb_{\partial}^+(\bbR^{4k-1},\bbR^{6k})$ that assigns to any embedding a standard choice of normal vector field. 

Furthermore, consider the following commutative diagram:
\[
\begin{tikzcd}
\pi_{4k-1}S^{2k} \arrow[r,"i_*"] \arrow[dr, "G_*"]
& \pi_0Emb_{\partial}^+(\bbR^{4k-1},\bbR^{6k}) \arrow[d,"C"]\\
    &\pi_{0}Emb_{\partial}(\underset{2}{\sqcup}\bbR^{4k-1},\bbR^{6k})  
\end{tikzcd}
\]
The map $G_*$ is split-injective (by Theorem~\ref{retract}), which implies that $i_*$ is injective. Since the map $j_*$ in~\eqref{seq} has a section, the result follows.

\end{proof}

\begin{proof}[Proof of Theorem~\ref{knotinv}]
We define two group homomorphisms $$\theta_1, \theta_2:\pi_0Emb_{\partial}^+(\bbR^{4k-1},\bbR^{6k})\rightarrow\bbQ$$ as follows. 
By the compression theorem (see \cite{COM}), any element  $F\in Emb_{\partial}^+(\bbR^{4k-1},\bbR^{6k})$ can be isotoped so that the normal vector field becomes vertical. After this isotopy, the vertical projection of the   embedding $\tilde F$ so obtained becomes a generic immersion of $\bbR^{4k-1}$ into $\bbR^{6k-1}$. We then define $\theta_1$ and $\theta_2$ by the formulas
$$\theta_1=\frac{1}{2}lk(L^+,L^-)+\frac{1}{6}E, \quad \theta_2=\Omega_{S^{2k}}$$
applied to $\tilde F$ and its projection $p\circ \tilde F$, respectively. 
The parametrized compression theorem (see \cite{COM}) guarantees that any path between such elements $\tilde F, \tilde G\in Emb_{\partial}^+(\bbR^{4k-1},\bbR^{6k})$ is homotopic to an isotopy 
with vertical normal vector field for every point in $\bbR^{4k-1}$ and at every moment $t\in [0,1]$ implying 
that the projection is a  regular homotopy. By Lemma~\ref{reghom}, it follows that $\theta_1$ is 
well-defined since it is invariant under isotopies whose projections are regular homotopies. 
The well-definedness of $\theta_2$ also follows directly from the parametrized compression theorem, though less surprisingly as $$\pi_0 Imm_{\partial}^+(\bbR^{4k-1},\bbR^{6k})=\pi_{4k-1}V_{6k,4k}=\pi_{4k-1}V_{6k-1,4k-1}=\pi_0Imm_{\partial}(\bbR^{4k-1},\bbR^{6k-1}).$$

Together, these invariants detect the rational isomorphism $$\pi_0Emb_{\partial}^+(\bbR^{4k-1},\bbR^{6k})\simeq_\bbQ \bbQ\oplus \bbQ$$ (see Lemma~\ref{embplus}). The only information lost is the torsion part of $\pi_{4k-1}S^{2k}$. We now claim that the combination $$\theta= \theta_1+\frac{1}{3}\theta_2:\pi_0Emb_{\partial}^+(\bbR^{4k-1},\bbR^{6k}) \rightarrow \bbQ,$$ vanishes on the $\pi_{4k-1}S^{2k}$ summand. This shows that the framing component contributes trivially to $\theta$, and therefore $\theta$ descends to a well-defined invariant on $\pi_0Emb_{\partial}(\bbR^{4k-1},\bbR^{6k})$.

   To prove the claim, it suffices to check that $\theta$ vanishes on any pair $(\mathcal{U},v)$, where $\mathcal{U}$ denotes the unknot and $v$ is a nonzero normal vector field. This can be verified using the cabling map on a specific embedding. Firstly, it is easy to see that if the normal vector field is $\mathbf{0}=e_{6k-1}+e_{6k}$, then we have $\theta(\mathcal{U},\mathbf{0})=0$ and $\theta(\mathcal{T},\mathbf{0})=1$, for the latter see Example~\ref{haefeg}. Haefliger \cite[Corollary~6.7]{HAE} has shown that a multiple $c\mathcal{T}$ of the Haefliger trefoil knot is isotopic to an embedding in $\bbR^{6k-1}$. Let $\mathcal{X}\in Emb_{\partial}(\bbR^{4k-1},\bbR^{6k-1})$ denote the isotoped version of $c\mathcal{T}$. Consider the inclusion $$j:Emb_{\partial}(\bbR^{4k-1},\bbR^{6k-1})\hookrightarrow Emb^+_{\partial}(\bbR^{4k-1},\bbR^{6k})$$ mapping an embedding $F$ to $(F,\mathbf{0})$.
Then $j(\mathcal{X})$ is isotopic to $(c\mathcal{T},v)$, for some nonzero normal vector field $v$. Let $Imm_{\partial}^+(\bbR^{4k-1},\bbR^{6k})$ be the space of immersions $\bbR^{4k-1}\looparrowright\bbR^{6k}$ equipped with a normal vector field. Because $\mathcal{T}$ is trivial as an immersion, so is $c\mathcal{T}$, and hence $(c\mathcal{T},v)$ in $Imm_{\partial}^+(\bbR^{4k-1},\bbR^{6k})$ is determined only by this nontrivial framing~$v$. Note that $(c\mathcal{T},v)$ can be chosen so that $$(c\mathcal{T},v)\simeq (c\mathcal{T},\mathbf{0})\# (\mathcal{U},v),$$ where $(\mathcal{U},v)$ is the unknot with the nonzero normal vector field $v$. Since the set of possible framings up to homotopy on $\mathcal{U}$ is measured by the group $\pi_{4k-1}S^{2k}$, which is rationally equivalent to $\bbZ$ with the generator $[id,id]$, so $v$ corresponds to some multiple of $[id,id]$.

Furthermore, we have $\mathcal{W}(C(c\mathcal{T},v))=(0,0)$, where $\mathcal{W}$ is the invariant defined for $2$-component links (see Section~\ref{2complinks}). Indeed, the cabling $C(j(\mathcal{X}),\mathbf{0})$ yields a two-component link $\mathcal{X}\sqcup\mathcal{X}'$, where the components $\mathcal{X}$ and $\mathcal{X}'$ lie in $\bbR^{6k-1}\times 0$ and $(\bbR^{6k-1}\times 0)+\epsilon\cdot\mathbf{0}$, respectively.
 Under projection to $\bbR^{6k-1}$ along the vector $(1,\ldots,1)\in \bbR^{6k}$, the projected images of $\mathcal{X}$ and $\mathcal{X}'$ may, a priori, intersect nontrivially. However, we can isotope $\mathcal{X}'$ within $(\bbR^{6k-1}\times 0)+\epsilon\cdot\mathbf{0}$ so that its support lies disjoint from that of $\mathcal{X}$, ensuring their projections have empty intersection, see Figure~\ref{fig:cablingg}. It follows that $\mathcal{W}(C(j(\mathcal{X}),\mathbf{0}))=(0,0)$, and hence $\mathcal{W}(C(c\mathcal{T},v))=(0,0)$. 

   \begin{figure}[h]
\centering
 \includegraphics[width=0.7\textwidth]{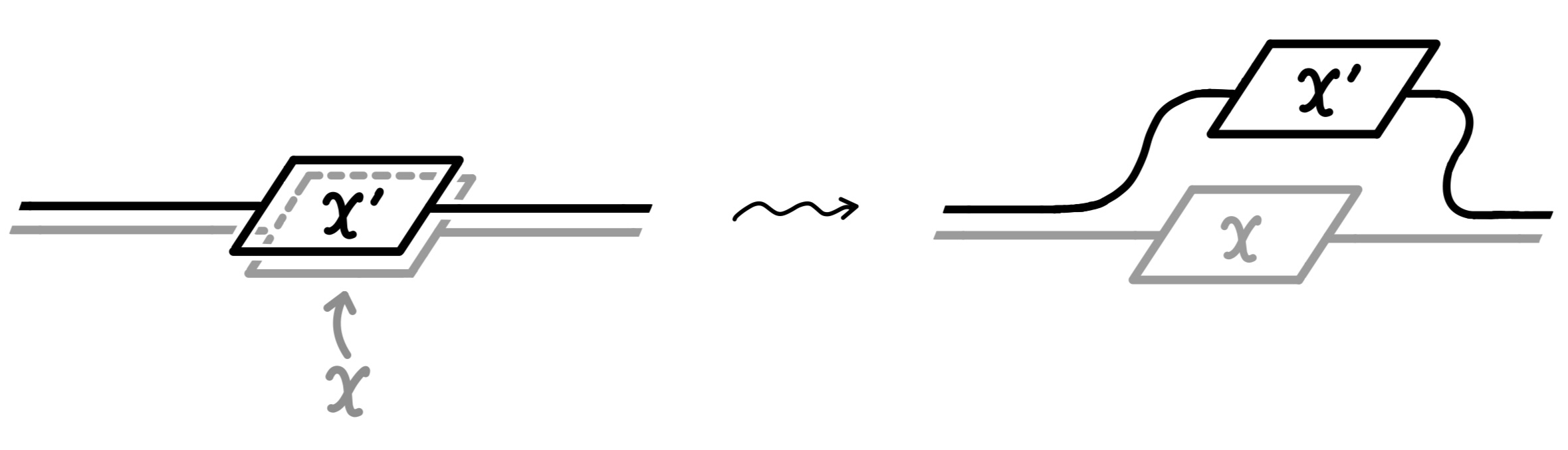}
 \caption{Isotoped $\mathcal{X}'$ in $(\mathbb{R}^{6k-1}\times 0)+\epsilon\cdot\mathbf{0}$}
 \label{fig:cablingg}
 \end{figure}

 From Proposition~\ref{brac2}, we have $\mathcal{W}(G_*([id,id]))=(2,2)$. Therefore, for $v=m\cdot [id,id]$, we get $\mathcal{W}(C(\mathcal{U},v))=(2m,2m)$. This implies that our invariant $\mathcal{W}=(\mathcal{W}_1,\mathcal{W}_2)$ when evaluated on any two-component link arising via the cabling, must satisfy $\mathcal{W}_1=\mathcal{W}_2$. Furthermore, it is easy to compute that $\mathcal{W}(C(\mathcal{T},\mathbf{0}))=\mathcal{W}(\mathcal{T}\sqcup\mathcal{T}')=(-6,-6)$, where $\mathcal{T}'$ is a slightly shifted copy of $\mathcal{T}$ along~$\mathbf{0}$. Indeed, each self-intersection of $p\circ \mathcal{T}$ in Example~\ref{haefeg} gives four self-intersections of $p\circ (\mathcal{T}\sqcup \mathcal{T}')$, see Figure~\ref{fig:cablinginter}.

\begin{center}
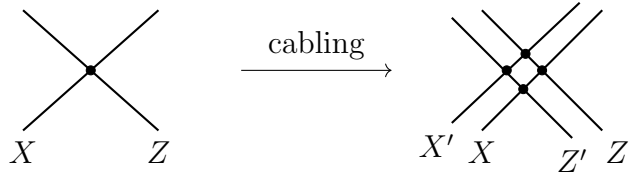

\begin{tikzpicture}
\draw[thick] (-0.8,-0.8)--(0.8,0.8);
\draw[thick] (-1.2,-0.7)--(0.5,0.9);
\draw[thick] (-0.8,0.8)--(0.8,-0.8);
\draw[thick] (-1.2,0.7)--(0.4,-0.9);
             \filldraw (0,0) circle (1.7pt);
             \filldraw (-0.22,0.22) circle (1.7pt);
             \filldraw (-0.25,-0.25) circle (1.7pt);
             \filldraw (-0.47,0) circle (1.7pt);
  \node [below] at (-0.8,-0.8) {$X$};
  \node [below] at (-1.4,-0.7) { $X'$};
\node [below] at (1,-0.8) { $Z$};
\node [below] at (0.4,-0.9) { $Z'$};
\begin{scope}[shift={(-6,0)}]
\draw[thick] (-0.9,-0.8)--(0.9,0.8);
\draw[thick] (-0.9,0.8)--(0.9,-0.8);
             \filldraw (0,0) circle (1.7pt);
  \node [below] at (-0.9,-0.8) {$X$};
    \node [below] at (0.9,-0.8) {$Z$};
\draw[->] (2,0) -- (4,0) node[above] at (3,0) {cabling};

\end{scope}
\end{tikzpicture} 
\captionof{figure}{Double intersection under cabling}
\label{fig:cablinginter}
\end{center}

Moreover, it is easy to check that the set of double points consists of two double Hopf links in each $X, Y,Z$ and $X',Y',Z'$. For example, the link inside $Z$ is shown in Figure~\ref{fig:cablingset}.

\begin{figure}[h]
\centering
 \includegraphics[width=0.6\textwidth]{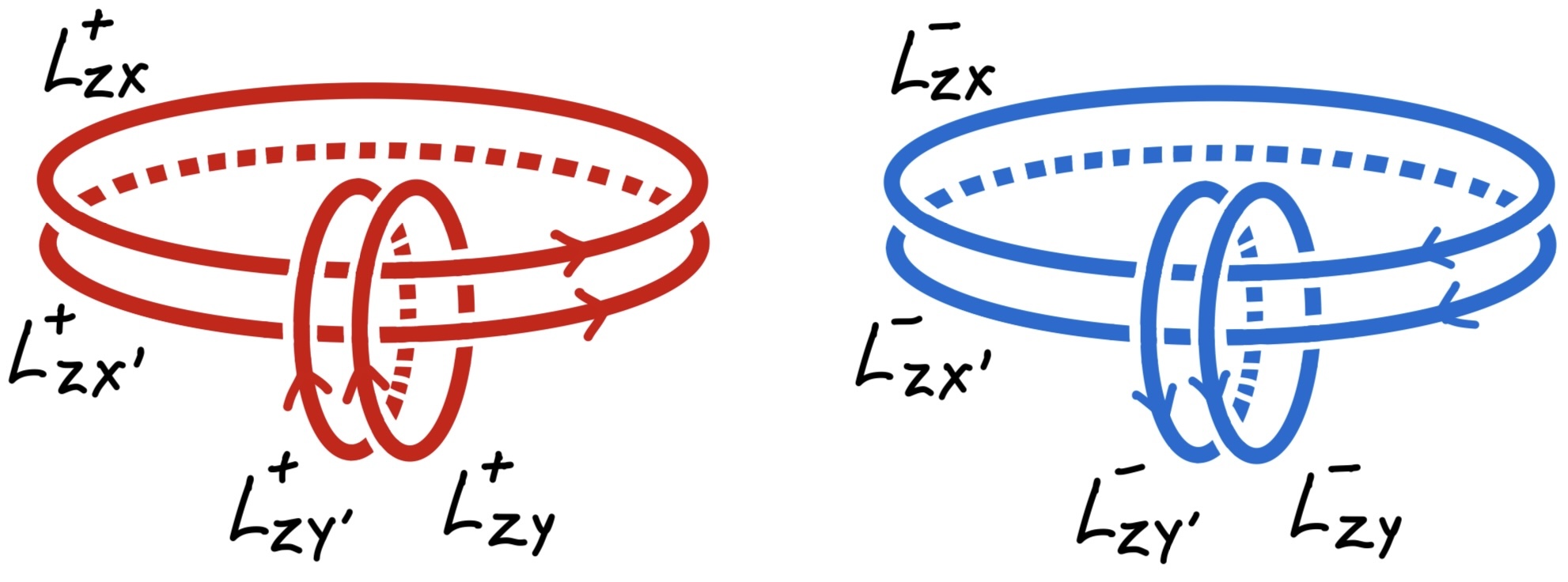}
 \caption{The preimages of the double intersection inside $Z$}
 \label{fig:cablingset}
 \end{figure}

 Thus, by Corollary~\ref{inv2_imm}, $$\mathcal{W}_1(\mathcal{T}\sqcup\mathcal{T}')=E_{X\# Y\# Z}\left(p\circ
(\mathcal{T}\sqcup\mathcal{T}')\right)=\frac 32 
lk_{X\# Y\# Z}(L_{ZX}\sqcup L_{ZY},L_{ZX'}\sqcup L_{ZY'})=-6.$$
 
 Using the additivity of $\mathcal{W}_1$ (=$\mathcal{W}_2)$, we have $$\mathcal{W}_1(C(c\mathcal{T},v))=\mathcal{W}_1(C(c\mathcal{T},\mathbf{0}))+\mathcal{W}_1(C(\mathcal{U},v))=-6c+2m. $$ Since $\mathcal{W}_1(C(c\mathcal{T},v))=0$, it follows that $m=3c$. Therefore, $(c\mathcal{T},v)$ is regularly homotopic to the trivial immersion $\bbR^{4k-1}\looparrowright\bbR^{6k}$ with a nonzero normal vector field $v=3c\cdot[id,id]\in\pi_{4k-1}S^{2k}$. Moreover, $\pi_0Imm_{\partial}^+(\bbR^{4k-1},\bbR^{6k})\overset{\Omega}{=}\pi_{4k-1}V_{6k,4k}=\pi_{4k-1}V_{6k-1,4k-1}$ implies that (rationally) $\Omega_{S^{2k}}$ evaluated on the projection of $(c\mathcal{T},v)$ must be $3c$. Hence, $$\theta(c\mathcal{T},v)=\frac{1}{2}\cdot 0+\frac{1}{6}\cdot 0 +\frac{1}{3}\cdot 3c=c.$$ By the additivity of $\theta$, we get 
 \begin{align*}
     \theta(c\mathcal{T},v) &= \theta(c\mathcal{T},\mathbf{0})+\theta(\mathcal{U},v) \\
     c&=c\cdot 1+\theta(\mathcal{U},v)
 \end{align*}
 which implies that $\theta(\mathcal{U},v)=0$. This completes the proof.
      \end{proof}
\end{subsection}

\begin{subsection}{Compatibility with Sakai's crossing change formula}\label{ss:Sakai}
    
In \cite{SAK1}, Sakai examined the Haefliger invariant for long embeddings $\bbR^{4k-1}\hookrightarrow \bbR^{6k}$ whose projections to $\bbR^{6k-1}$ are generic immersions. He derived a Lin-Wang type formula that captures the difference in the Haefliger invariant under crossing changes. 
 The result builds upon his earlier work in \cite{SAK2}, where he expressed the Haefliger invariant in terms of configuration space integrals. Our Theorem~\ref{knotinv} gives a different proof to Sakai's  \cite[Theorem 2.5]{SAK2} that we recall below.
\begin{thm}[Sakai \cite{SAK1}]
   Let $F:\bbR^{4k-1}\hookrightarrow \bbR^{6k}$ be a long embedding whose projection $p\circ F:\bbR^{4k-1}\looparrowright\bbR^{6k-1}$ 
   is a generic immersion. Let 
    the self-intersection of $p\circ F$ be $A=A_1\sqcup A_2$ (where $A_1$ and $A_2$ can be disconnected or empty), with preimages $(p\circ F)^{-1}(A_i)=L_i=L_i^+\sqcup -L_i^-$, $i=1,2$, oriented using the adjusted Ekholm orientation. Let $F_{A_1}:\bbR^{4k-1}\hookrightarrow \bbR^{6k}$ be obtained from $F$ by the crossing change at $A_1$. Then 
    \begin{equation}\label{eq:cross_change}
     \mathcal{H}(F_{A_1})-\mathcal{H}(F)= \frac{1}{2}lk(L_1,L_2).
    \end{equation}
\end{thm}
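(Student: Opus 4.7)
The plan is to apply the formula~\eqref{conh1} of Theorem~\ref{knotinv} to both $F$ and $F_{A_1}$ and subtract. The crucial first observation is that a crossing change at $A_1$ does not alter the projection $f = p\circ F$ as an immersion: it only swaps, along $A_1$, which of the two local sheets of $F$ in $\bbR^{6k}$ lies above the other. Hence the last two summands $\tfrac{1}{6}E(f)$ and $\tfrac{1}{3}\Omega_{S^{2k}}(f)$ in~\eqref{conh1}, which are defined purely in terms of the immersion and its regular homotopy class, are unchanged. The entire difference $\mathcal{H}(F_{A_1})-\mathcal{H}(F)$ therefore reduces to the change in $\tfrac{1}{2}lk(L^+,L^-)$.

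Next I would analyse how the over/under decomposition of the double-point preimage transforms. Writing $L^+ = L_1^+\sqcup L_2^+$ and $L^- = L_1^-\sqcup L_2^-$ with Ekholm orientation, the crossing change at $A_1$ interchanges the labels of $L_1^+$ and $L_1^-$ but leaves the $L_2^{\pm}$ untouched; moreover, since the Ekholm orientation depends only on $f$, the underlying oriented submanifolds are the same. So for $F_{A_1}$ the over and under preimages are
$$L^{\prime+} = L_1^-\sqcup L_2^+, \qquad L^{\prime-} = L_1^+\sqcup L_2^-.$$

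The remaining step is a short bilinear expansion. One computes
$$\Delta := lk(L^{\prime+}, L^{\prime-}) - lk(L^+, L^-),$$
expanding linking by linearity over the four-term decompositions above. In the relevant dimensions $(2k-1)+(2k-1)+1 = 4k-1$, the symmetry rule~\eqref{lksym} gives $lk(M,N) = lk(N,M)$, so the self-terms $lk(L_1^\pm,L_1^\mp)$ and $lk(L_2^+,L_2^-)$ cancel between the two expressions, leaving exactly the four cross-terms
$$\Delta \;=\; lk(L_1^-, L_2^-) - lk(L_1^+, L_2^-) + lk(L_1^+, L_2^+) - lk(L_1^-, L_2^+)
\;=\; lk\bigl(L_1^+ - L_1^-,\, L_2^+ - L_2^-\bigr).$$
Under the adjusted Ekholm convention $L_i = L_i^+\sqcup -L_i^-$, the right-hand side is precisely $lk(L_1, L_2)$, and dividing by $2$ yields~\eqref{eq:cross_change}.

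No step is technically deep; the whole argument is essentially an algebraic bookkeeping exercise once Theorem~\ref{knotinv} is granted. The only potential pitfall is the sign accounting in the cancellation above, which depends on knowing that crossing change preserves not only the underlying immersion $f$ but also the Ekholm orientations of the individual sheets $L_i^{\pm}$ (so that no hidden sign appears when one relabels $L_1^+ \leftrightarrow L_1^-$). Once this is checked, the symmetry of the linking form forces the formula, and in fact exhibits~\eqref{eq:cross_change} as the direct analogue in our setup of the classical Polyak--Viro computation of how the Gauss-diagram count changes under a classical crossing change.
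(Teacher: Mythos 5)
Your argument is correct and matches the paper's own proof: both observe that $E$ and $\Omega$ depend only on the projection and hence are unchanged by a crossing change, then reduce the computation to $\tfrac{1}{2}\bigl[lk(L_1^-\sqcup L_2^+,L_1^+\sqcup L_2^-)-lk(L_1^+\sqcup L_2^+,L_1^-\sqcup L_2^-)\bigr]=\tfrac{1}{2}lk(L_1,L_2)$. Your writeup simply spells out the bilinear expansion and the use of the symmetry rule~\eqref{lksym} (with $(m+1)(n+1)=4k^2$ even) in more detail than the paper does, but the approach is identical.
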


\begin{proof}
     Since the expressions~$E$  and~$\Omega$ depend only on the projection, they are unchanged in $\mathcal{H}(F_{A_1})$. Hence the only difference is captured by    interchanging the roles of $L_1^+$ and $L_1^-$ in the linking part of the formula:  
     $\mathcal{H}(F_{A_1})-\mathcal{H}(F) =  \frac{1}{2}lk(L_1^-\sqcup L_2^+,L_1^+\sqcup L_2^-) 
     -\frac{1}{2}lk(L_1^+\sqcup L_2^+,L_1^-\sqcup L_2^-)
 =\frac{1}{2}lk(L_1,L_2).$
\end{proof}

Note that  crossing change can also be
defined for  self-intersection components with Whitney umbrella points. Moreover, the right-hand side of the formula \eqref{eq:cross_change} still makes sense since one uses in it the adjusted Ekholm orientation, see Subsection~\ref{orient}. It is thus natural to conjecture that \eqref{eq:cross_change} holds for
any embedding $F\colon \bbR^{4k-1}\hookrightarrow 
\bbR^{6k}$ with a generic projection on $\bbR^{6k-1}$.

Similar formulas of crossing change can easily be derived for the invariants $\mathcal V$, $\mathcal W_1$ and~$\mathcal W_2$.

  
\end{subsection}

\end{section}

\setcounter{section}{0}
\renewcommand{\thesection}{\Alph{section}}

\begin{section}{Connection to configuration spaces}\label{appen}
The configuration space of $k$ distinct ordered points in $X$ is given by $$C(k,X)\coloneqq\{(x_1,\ldots,x_k)\in X^k| x_i\neq x_j \text{  for  } i\neq j\}.$$
For $\ell\geq3$, consider the space of based smooth maps$$\Omega^nC(k,\bbR^q)=\{D^n\rightarrow C(k,\bbR^q) \text{  constant at a neighborhood of } \partial D^n\}.$$ This space can be interpreted as the space of $n$-dimensional braids in $\bbR^{n+q}$, via the graphing map $$G:\Omega^nC(k,\bbR^q)\rightarrow Emb_{\partial}(\underset{k}{\sqcup}\bbR^n,\bbR^{n+q}),$$ $$f\mapsto G(f),$$ where for each $i=1,\ldots,k$, $$(G(f)_i)(x_1,\ldots,x_n)\coloneqq(x_1,\ldots,x_n,f_i(x_1,\ldots,x_n))\in \bbR^{n+q}, $$ see \cite{BUD2,RAF} for more details.  The induced homomorphism $$G_{*}:\pi_0\Omega^nC(k,\bbR^q)=\pi_nC(k,\bbR^q)\rightarrow \pi_0Emb_{\partial}(\underset{k}{\sqcup}\bbR^n,\bbR^{n+q}),$$ 
 was conjectured in \cite[Conjecture 5.7]{RAF} to be an injection rationally.
 We prove a stronger statement that it
is split-injective, which follows immediately from the following:  

 \begin{thm}\thlabel{retract}
      For $q\geq 3$, $\Omega^nC(k,\bbR^q)$ is a homotopy retract of $Emb_{\partial}(\underset{k}{\sqcup}\bbR^n,\bbR^{n+q})$. 
      \end{thm}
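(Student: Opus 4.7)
The plan is to induct on $k$, exploiting the Fadell--Neuwirth fibration of configuration spaces in parallel with the ``forget the last strand'' fibration of long link spaces. The base case $k=1$ is trivial since $\Omega^n C(1,\bbR^q)=\Omega^n\bbR^q$ is contractible, but the real engine of the induction is the following sharper statement, which I would prove as a key lemma: for any smooth open $q$-manifold $Y$ with a chosen basepoint $y_0$, the graphing map
\[
G_Y\colon \Omega^n Y\longrightarrow Emb_\partial(\bbR^n,\bbR^n\times Y),\qquad g\longmapsto\bigl(x\mapsto (x,g(x))\bigr),
\]
admits the \emph{strict} retraction $r_Y(F)=\pi_Y\circ F$, where $\pi_Y\colon \bbR^n\times Y\to Y$ is the projection, since $\pi_Y(x,g(x))=g(x)$ and $r_Y(F)$ inherits the long-embedding boundary condition at infinity.

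For the inductive step, assume the result for $k-1$ with retraction $R_{k-1}$ and consider the map of fibration sequences
\[
\begin{tikzcd}[column sep=small]
\Omega^n(\bbR^q\setminus S) \arrow[r] \arrow[d] & Emb_\partial(\bbR^n,\bbR^n\times(\bbR^q\setminus S)) \arrow[d] \\
\Omega^n C(k,\bbR^q) \arrow[r,"G_k"] \arrow[d,"\pi'"] & Emb_\partial(\underset{k}{\sqcup}\bbR^n,\bbR^{n+q}) \arrow[d,"\pi"] \\
\Omega^n C(k{-}1,\bbR^q) \arrow[r,"G_{k-1}"] & Emb_\partial(\underset{k-1}{\sqcup}\bbR^n,\bbR^{n+q})
\end{tikzcd}
\]
where $S=\{p_1,\dots,p_{k-1}\}$ and the basepoint on the link side is the standard $(k-1)$-link $L_0=\bigsqcup_i \bbR^n\times\{p_i\}$. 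The right column is a Serre fibration by the isotopy extension theorem (valid in the range $q\geq 3$), and the fiber is identified as claimed because $\bbR^{n+q}\setminus L_0=\bbR^n\times(\bbR^q\setminus S)$ literally. The key lemma applied to $Y=\bbR^q\setminus S$ yields a strict retract on the top row, while the inductive hypothesis provides $R_{k-1}$ on the bottom row.

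The hard part is the assembly: combining the base and fiber retracts into a total-space retract $r_k$. Given $F$ with first $k-1$ strands $F'=\pi(F)$, one would naively try to straighten $F'$ to $G_{k-1}(R_{k-1}(F'))$ and apply the fiber retract in the straightened frame, but no such straightening isotopy exists in general because $G_{k-1}\circ R_{k-1}\not\simeq\mathrm{id}$; moreover, for a knotted $F'$ the link complement $\bbR^{n+q}\setminus F'(\underset{k-1}{\sqcup}\bbR^n)$ is only homotopy equivalent to $\bbR^n\times(\bbR^q\setminus S_{F'})$, not literally equal, so the fiber retract $\pi_Y$ cannot be applied naively. My plan for overcoming this is to invoke the standard principle that a map of Serre fibrations admitting homotopy retracts on both fiber and base admits a total-space retract, assembled fiber-wise using parametrized isotopy extension to produce continuously-varying trivializations of the link complement. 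This is where the assumption $q\geq 3$ enters crucially, both in making $\pi$ a fibration and in producing the parametrized trivializations needed to transport the fiber retract coherently across the base.
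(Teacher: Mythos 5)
Your key lemma --- that the graphing map $G_Y\colon\Omega^n Y\to Emb_\partial(\bbR^n,\bbR^n\times Y)$ has a \emph{literal} retraction given by post-composition with the projection $\pi_Y$ --- is exactly the mechanism the paper uses, and your ladder of Fadell--Neuwirth and forget-a-strand fibrations is the right scaffolding. The gap is in the assembly. The principle you invoke, that a map of Serre fibrations admitting retracts on fiber and base admits a total-space retract, is false: the inclusion of a Hopf fiber $S^1\hookrightarrow S^3$ covers the identity on fibers and the collapse $S^2\to *$ on bases, each a retract, yet $S^1$ is not a retract of $S^3$ because any map $S^1\to S^3$ is nullhomotopic. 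Nor is a section alone enough to split a fibration (the Klein bottle is an $S^1$-bundle over $S^1$ with a section, yet is not homotopy equivalent to $S^1\times S^1$). Parametrized isotopy extension gives local trivializations of the link complement over contractible pieces of the base, but does not furnish a coherent global way to transport the projection retract; you rightly flag that $G_{k-1}\circ R_{k-1}\not\simeq\mathrm{id}$ obstructs a naive straightening, and that obstruction is real.

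The missing ingredient is the H-space structure. Both columns of your ladder are fibrations of grouplike H-spaces: $\Omega^n(-)$ under loop composition, and $Emb_\partial$ under concatenation of long links (grouplike because $q\geq3$ makes $\pi_0$ an abelian group). A fibration of H-spaces equipped with a section $s$ splits via $(f,b)\mapsto f\cdot s(b)$, so instead of an induction one obtains, after iterating,
\begin{equation*}
\Omega^nC(k,\bbR^q)\simeq\prod_{i=1}^{k-1}\Omega^n\Bigl(\bigvee_i S^{q-1}\Bigr),\qquad Emb_\partial(\underset{k}{\sqcup}\bbR^n,\bbR^{n+q})\simeq\prod_{i=0}^{k-1}Emb_\partial\Bigl(\bbR^n,\bbR^{n+q}\setminus(\underset{i}{\sqcup}\bbR^n)\Bigr),
\end{equation*}
and your key lemma applies factor by factor because $\bbR^{n+q}\setminus(\underset{i}{\sqcup}\bbR^n)$ is \emph{literally} $\bbR^n\times(\bbR^q\setminus\{i\text{ points}\})$ --- the unstraightenable, only-homotopy-equivalent link complements never enter. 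This is precisely the paper's argument; once you replace the unfounded ``standard principle'' with the H-space splitting, your proof matches it.
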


   \begin{proof}
By \cite[Theorem 2.1]{Fred}, there is a homotopy equivalence \begin{equation}\label{confprod}
\Omega C(k,\bbR^q)\rightarrow \underset{i=1}{\overset{k-1}{\prod}}\Omega(\underset{i}{\bigvee} S^{q-1})
\end{equation}
 obtained as follows.
Consider the natural map $C(k,\bbR^q)\rightarrow C(k-1,\bbR^q)$ which forgets the last point in the configuration. This gives rise to a fibration 
$$\underset{k-1}{\bigvee} S^{q-1}\rightarrow C(k,\bbR^q)\rightarrow C(k-1,\bbR^q),$$
with a section obtained by inserting a new point in $\bbR^q$ far away from the others. Because of the section, the induced fibration in loop spaces splits as a 
product implying~\eqref{confprod}:
$$\Omega C(k,\bbR^q)\simeq \Omega C(k-1,\bbR^q)\times \Omega(\underset{k-1}{\bigvee} S^{q-1}).$$
To be precise, one uses the H-space structure on $\Omega C(k,\bbR^q)$ to define the map $$\Omega C(k-1,\bbR^q)\times \Omega(\underset{k-1}{\bigvee} S^{q-1})\xrightarrow{\simeq}\Omega C(k,\bbR^q).$$

From \eqref{confprod}, we get
\begin{equation}\label{c} \Omega^n C(k,\bbR^q)\simeq   \underset{i=1}{\overset{k-1}{\prod}} \Omega^n(\underset{i}{\bigvee} S^{q-1}). \end{equation}
Now consider the  case of long embeddings. There is a fibration of H-spaces
$$Emb_{\partial}(\bbR^n,\bbR^{n+q}\backslash (\underset{k-1}{\sqcup}\bbR^n))\rightarrow Emb_{\partial}(\underset{k}{\sqcup}\bbR^n,\bbR^{n+q})\rightarrow Emb_{\partial}(\underset{k-1}{\sqcup}\bbR^n,\bbR^{n+q}),$$
with a section given by inserting a new, disjoint trivial strand in $\bbR^{n+q}$. By the same argument as above, 
\begin{equation}\label{e} Emb_{\partial}(\underset{k}{\sqcup}\bbR^n,\bbR^{n+q})\simeq  \underset{i=0}{\overset{k-1}{\prod}} Emb_{\partial}(\bbR^n,\bbR^{n+q}\backslash (\underset{i}{\sqcup}\bbR^n)).\end{equation} (We use here that $\pi_0 Emb_{\partial}(\underset{k}{\sqcup}\bbR^n,\bbR^{n+q})$ is an abelian group.)

Each factor in \eqref{c} is a retract of the corresponding factor in \eqref{e}, where for $i=0$, we take a point as a factor in \eqref{c}. More precisely, there is a natural graphing map $$\Omega^n(\underset{i}{\bigvee} S^{q-1})\simeq \Omega^n(\bbR^q\backslash \{i \text{ pts}\})\xrightarrow{G} Emb_{\partial}(\bbR^n,\bbR^{n+q}\backslash (\underset{i}{\sqcup}\bbR^n)).$$ Furthermore, the projection of $\bbR^{n+q}\backslash (\underset{i}{\sqcup}\bbR^n)$ to $\bbR^q\backslash \{i \text{ pts}\}$ gives a map $$Emb_{\partial}(\bbR^n,\bbR^{n+q}\backslash (\underset{i}{\sqcup}\bbR^n))\xrightarrow{F} Map_{\partial}(\bbR^n,\bbR^q\backslash \{i \text{ pts}\})\simeq \Omega^n(\underset{i}{\bigvee} S^{q-1}).$$ It is easy to see that 
$F\circ G= id$ on $\Omega^n(\bbR^q\backslash \{i \text{ pts}\})$, which establishes the claim. 
\end{proof}

The homotopy group $\pi_*C(k,\bbR^q)$ admits a Brunnian splitting, analogous to the one for spaces of embeddings (see Proposition~\ref{split}). In particular, for $q\geq 3$, we have
  \begin{equation}
\pi_{*}C(k,\bbR^q)=\underset{i=1}{\overset{k}{\bigoplus}}\Big(\underset{\binom ki}{\oplus}\pi^{br}_{*}C(i,\bbR^q)\Big),
\end{equation} 
where the brunnian summand is defined by $$\pi^{br}_{*}C(i,\bbR^q)\coloneq\underset{j=1}{\overset{i}{\cap}} ker\big(\pi_*C(i,\bbR^q)\xrightarrow{s_j} \pi_*C(i-1,\bbR^q)\big),$$ with each map $s_j$ forgetting the $j^{th}$ configuration point. Moreover, the graphing map $G_{*}:\pi_nC(k,\bbR^q)\rightarrow \pi_0Emb_{\partial}(\underset{k}{\sqcup}\bbR^n,\bbR^{n+q})$ respects this splitting. The same holds for the higher homotopy groups as well. We can hence conclude:

\begin{proposition}\label{pr:split}
For any $n\geq 1$, $q\geq 3$,  $k\geq 2$, and $j\geq 0$,
$\pi_{n+j}^{br}C(k,\bbR^q)$ is a direct summand 
in $\pi_j^{br} Emb_{\partial}(\underset{k}{\sqcup}\bbR^n,\bbR^{n+q})$.
\end{proposition}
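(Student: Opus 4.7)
The proof combines Theorem~\ref{retract} with the naturality of the graphing map $G$ under forgetting a strand.

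The starting observation is that the Brunnian splitting established in Proposition~\ref{split} for $\pi_0$ extends to every homotopy degree. Indeed, the forgetting map
$$Emb_\partial\big(\underset{k}{\sqcup}\bbR^n,\bbR^{n+q}\big)\longrightarrow Emb_\partial\big(\underset{k-1}{\sqcup}\bbR^n,\bbR^{n+q}\big)$$
is a fibration admitting a section by inserting a trivial strand in a small ball disjoint from the other components, so its long exact sequence of homotopy groups breaks into split short exact sequences. Iterating exactly as in the proof of Proposition~\ref{split} yields
$$\pi_j Emb_\partial\big(\underset{k}{\sqcup}\bbR^n,\bbR^{n+q}\big)=\underset{i=1}{\overset{k}{\bigoplus}}\Big(\underset{\binom{k}{i}}{\oplus}\pi_j^{br}Emb_\partial\big(\underset{i}{\sqcup}\bbR^n,\bbR^{n+q}\big)\Big),$$
where $\pi_j^{br}$ denotes the intersection of kernels of all forgetting maps. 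The analogous splitting of $\pi_{n+j}C(k,\bbR^q)$ is recalled in the paragraph preceding the proposition. Because graphing commutes on the nose with forgetting a strand/point, $G_*$ carries the Brunnian summand on the configuration side into the Brunnian summand on the embedding side; call the restriction $G_*^{br}:\pi_{n+j}^{br}C(k,\bbR^q)\to \pi_j^{br}Emb_\partial(\underset{k}{\sqcup}\bbR^n,\bbR^{n+q})$.

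By Theorem~\ref{retract}, $G$ admits a homotopy retraction $r$, and the induced map $r_*$ satisfies $r_*\circ G_*=id$ on $\pi_{n+j}C(k,\bbR^q)$. Let $\iota,p$ (respectively $\iota',p'$) denote the inclusion and projection associated with the Brunnian summand on the configuration (respectively embedding) side. By the previous paragraph one has $G_*\circ\iota=\iota'\circ G_*^{br}$, and therefore the homomorphism
$$r^{br}:=p\circ r_*\circ\iota':\pi_j^{br}Emb_\partial\big(\underset{k}{\sqcup}\bbR^n,\bbR^{n+q}\big)\longrightarrow\pi_{n+j}^{br}C(k,\bbR^q)$$
retracts $G_*^{br}$:
$$r^{br}\circ G_*^{br}=p\circ r_*\circ\iota'\circ G_*^{br}=p\circ r_*\circ G_*\circ\iota=p\circ\iota=id.$$
This exhibits $\pi_{n+j}^{br}C(k,\bbR^q)$ as a direct summand of $\pi_j^{br}Emb_\partial(\underset{k}{\sqcup}\bbR^n,\bbR^{n+q})$.

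The only point that requires care is the higher-degree Brunnian splitting for embedding spaces, since Proposition~\ref{split} addresses only $\pi_0$. However, the forget-insert argument used there is manifestly natural in the homotopy degree, so the same iterated splitting of short exact sequences works in every $\pi_j$, and no additional input is needed.
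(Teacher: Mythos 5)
Your argument is correct and takes essentially the same route as the paper, which simply asserts that the Brunnian splitting is respected by $G_*$ and that the retraction from Theorem~\ref{retract} passes to the Brunnian summands. You supply the formal bookkeeping that the paper leaves implicit, namely the computation $r^{br}\circ G_*^{br}=p\circ r_*\circ\iota'\circ G_*^{br}=p\circ r_*\circ G_*\circ\iota=p\circ\iota=\mathrm{id}$ and the (easy but worth stating) observation that the splitting argument of Proposition~\ref{split}, being driven by fibrations with sections, works uniformly in all homotopy degrees.
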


\end{section}

\printbibliography

\bigskip

\hspace{0.5cm}
\footnotesize
\begin{tabular}{@{}l@{}}
  \textsc{Max Planck Institute for Mathematics, Vivatsgasse 7, 53111 Bonn, Germany}\\
  \textit{Email address}: \texttt{neetigauniyal19@gmail.com}\\[0.3cm]
  \textsc{Department of Mathematics, Kansas State University, Manhattan, KS 66506, USA}\\
  \textit{Email address}: \texttt{turchin@ksu.edu}
\end{tabular}
\end{document}